\title{Norm estimates for the $\dbar$-equation on a non-reduced space}
\author{Mats Andersson \& Richard L\"ark\"ang}
\thanks{The authors were partially supported by grants from the Swedish Research Council.}
\subjclass[2000]{32A26, 32A27, 32B15, 32W05}
\address{Department of Mathematical Sciences, Division of Algebra and Geometry,
Chalmers University of Technology and the  University of Gothenburg,
SE-412 96 G\"{o}teborg, Sweden}
\email{matsa@chalmers.se, larkang@chalmers.se}
\newtheorem{thm}{Theorem}[section]
\newtheorem{lma}[thm]{Lemma}
\newtheorem{cor}[thm]{Corollary}
\newtheorem{prop}[thm]{Proposition}
\theoremstyle{definition}
\newtheorem{df}[thm]{Definition}
\theoremstyle{remark}
\newtheorem{preremark}[thm]{Remark}
\newtheorem{preex}[thm]{Example}
\newenvironment{remark}{\begin{preremark}}{\qed\end{preremark}}
\newenvironment{ex}{\begin{preex}}{\qed\end{preex}}
\newcommand{\C}{\mathbb{C}}
\newcommand{\debar}{\bar{\partial}}
\newcommand{\dbar}{\bar{\partial}}
\newcommand{\A}{\mathscr{A}}
\newcommand{\J}{\mathcal{J}}
\newcommand{\I}{\mathcal{I}}
\newcommand{\E}{\mathscr{E}}
\newcommand{\W}{\mathcal{W}}
\newcommand{\Ok}{\mathscr{O}}
\newcommand{\K}{\mathcal{K}}
\newcommand{\F}{\mathcal{F}}
\newcommand{\om}{{\scalebox{1.3}{$\omega$}}}
\newcommand{\V}{{\mathcal V}}
\newcommand{\CH}{\mathcal{{ CH}}}
\newcommand{\pmm}{pseudomeromorphic }
\newcommand{\nbh}{neighborhood }
\newcommand{\1}{{\bf 1}}
\newcommand{\w}{\wedge}
\newcommand{\codim}{{\text{codim}\,}}
\newcommand{\Homs}{{\mathcal Hom}}
\newcommand{\Hom}{{\text{Hom}\,}}
\newcommand{\Kers}{{\mathcal Ker\,}}
\newcommand{\N}{{\mathcal N}}
\newcommand{\U}{{\mathcal U}}
\newcommand{\Cu}{{\mathcal C}}
\newcommand{\La}{{\mathcal L}}
\DeclareMathOperator{\supp}{supp}
\DeclareMathOperator{\rank}{rank}
\DeclareMathOperator{\End}{End}
\numberwithin{equation}{section}
\date{\today}
\begin{document}

\nocite{*}
\bibliographystyle{plain}

\begin{abstract}
    We study norm-estimates for the $\dbar$-equation on non-reduced analytic spaces.
    Our main result is that on a non-reduced analytic space, which is Cohen-Macaulay
    and whose underlying reduced space is smooth, the $\dbar$-equation for $(0,1)$-forms
    can be solved with $L^p$-estimates.
\end{abstract}

\maketitle
\thispagestyle{empty}

\section{Introduction}

Various  estimates for solutions of the $\dbar$-equation on a smooth complex manifold are known since long ago.
The paramount methods are the $L^2$-methods, going back to H\"ormander, Kohn, and others, and
integral representation formulas, first used by Henkin and by Skoda.   Starting with \cite{PaSt1,PaSt2} there has been an
increasing interest for $L^2$- and  $L^p$-estimates for $\dbar$ on non-smooth reduced analytic spaces in later years, see,
e.g., \cite{BS,FOV,LR2,OvVass2,Rupp2}.
In \cite{AA, CDM,De} are results about $L^2$-estimates of extensions from non-reduced subvarieties.
In this paper we try to initiate the study of $L^p$-estimates for the $\dbar$-equation on
a non-reduced analytic space.

\smallskip
Let $X$ be an analytic space of pure dimension $n$ with structure sheaf $\Ok_X$.
Locally then we have an embedding $i\colon X\to \U \subset \C^N$ and a coherent
ideal sheaf $\J\subset \Ok_\U$ of pure dimension $n$ such that
$\Ok_X=\Ok_\Omega/\J$ in $X\cap \U$.  In \cite{AL} we introduced a notion of smooth $(0,*)$-forms on $X$ and
proved that if the underlying reduced space $X_{red}$ is smooth and in addition $\Ok_X$
is Cohen-Macaulay,  then there is a smooth solution to $\dbar u=\phi$ if $\phi$ is smooth and
$\dbar\phi=0$.  More generally we defined sheaves $\A_X^q$ of $(0,q)$-forms on $X$ that
are closed under multiplication by smooth $(0,*)$-forms and coincides with $\E_X^{0,q}$
where $X_{red}$ is smooth and $\Ok_X$ is Cohen-Macaulay, such that
$$
0\to \Ok_X\to \A_X^0\to \cdots \to \A_X^n\to 0
$$
is a fine resolution of $\Ok_X$.   The solutions to the $\dbar$-equation are obtained by intrinsic
integral formulas on $X$.
Variants of the $\dbar$-equation on non-reduced spaces have also been studied by Henkin-Polyakov,
\cite{HePo2,HePo3}.

In \cite{Anorm} was introduced a pointwise norm $|\cdot|$ on forms $\phi\in \E_X^{0,*}$. That is,
$|\phi(x)|_X$ is non-negative function on $X_{red}$ which vanishes in a \nbh of a point $x_0$ if and only if
$\phi$ vanishes there.
It was proved that $\Ok_X$ is complete with respect to the topology of uniform convergence on compacts induced
by this norm.
In this paper we will only discuss spaces where $X_{red}$ is smooth.
In \cite{Anorm} is defined an intrinsic  coherent left $\Ok_{X_{red}}$-module  $\N_X$ of differential operators
$\Ok_X\to\Ok_{X_{red}}$, and the pointwise norm is defined as
$$
|\phi(x)|^2_X=\sum_k|(\La_k\phi)(x)|^2,
$$
where $\La_j$ is a finite set of local generators for $\N_X$. Clearly another set of generators will give
rise to an equivalent norm. In particular, it becomes meaningful to say that $\phi$ vanishes at a point
$x\in X$.  The norm  extends to smooth $(0,*)$-forms.
By a partition of unity we  patch together and define a fixed global $|\cdot|_X$.
Let us also choose a volume element $dV$ on $X_{red}$.
We define $L^p_{0,*;X}$ as the (local) completion of $\E^{0,*}_X$ with respect to
the  $L^p$-norm. In the same way we define $C_{0,*; X}$ as the completion with respect to
the uniform norm.
Our main result is

\begin{thm}\label{main}
Let $X$ be an analytic space such that  $\Ok_X$ is Cohen-Macaulay and $X_{red}$ is smooth.
Assume that $1\le p<\infty$.
Given a point $x$ there are neighborhoods
$\V'\subset\subset\V\subset X$ and a constant $C_p$ such that if $\phi\in L^p_{0,1}(\V)$ and $\dbar\phi=0$,  then there is
$\psi \in L^p_{0,0}(\V')$ such that $\dbar \psi=\phi$ and
$$
\int_{\V'_{red}}|\psi|_X^p dV\le C_p^p \int_{\V_{red}}|\phi|_X^p dV.
$$
%
Moreover, there is a constant $C_\infty$ such that
if $\phi\in C_{0,1}(\V)$ and $\dbar\phi=0$,  then there is a solution $\psi\in C_{0,0}(\V')$ such that
$$
\sup_{\V'_{red}}|\psi|_X\le C_\infty \sup_{\V_{red}}|\phi|_X.
$$
\end{thm}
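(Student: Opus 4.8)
The plan is to reduce the global problem on $X$ to integral formulas on the smooth reduced ambient space $X_{red}$ and then invoke classical $L^p$-estimates for the $\dbar$-equation on a smoothly bounded strictly pseudoconvex domain. The starting point is the intrinsic integral representation on $X$ from \cite{AL}: shrinking to a small neighborhood $\V$ of $x$ whose reduction $\V_{red}\subset \C^n$ (identifying $X_{red}$ with a smooth model) is a small strictly pseudoconvex domain, a $\dbar$-closed $\phi\in \A_X^{0,1}(\V)$ admits a solution of the form $\psi = K\phi$ on $\V'$, where $K$ is an integral operator whose kernel $k(\zeta,z)$ is built from the intrinsic Koppelman formula. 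The key structural fact I would use is that, because $\Ok_X$ is Cohen-Macaulay with $X_{red}$ smooth, the sheaves $\A_X^{0,q}$ coincide with $\E_X^{0,q}$, and the norm $|\cdot|_X$ defined via the module $\N_X$ of differential operators $\Ok_X\to\Ok_{X_{red}}$ lets us represent a form (and the action of the solution operator on it) in terms of finitely many ordinary $(0,*)$-forms on $\V_{red}$: applying the generators $\La_k$ turns the whole setup into a finite system of ordinary $\dbar$-equations on $\V_{red}\subset\C^n$ with a triangular (in the order of the differential operators) coupling.

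First I would make this reduction precise: writing $\phi$ in the chosen local frame and applying $\N_X$, the condition $\dbar\phi=0$ on $X$ becomes a finite collection of $\dbar$-closed $(0,1)$-forms $\phi_k := \La_k\phi$ on $\V_{red}$ (up to lower-order correction terms involving $\La_j\phi$ with $\La_j$ of lower order — this is where the non-reducedness enters), and $\int_{\V_{red}}|\phi|_X^p\,dV \asymp \sum_k \int_{\V_{red}}|\phi_k|^p\,dV$. Next, for each of these ordinary $\dbar$-equations on the strictly pseudoconvex domain $\V'_{red}\subset\subset\V_{red}$, classical Henkin-type integral formulas (for $1\le p<\infty$) and the Henkin-Romanov sup-norm estimate (for $p=\infty$) give a solution operator bounded $L^p(\V_{red})\to L^p(\V'_{red})$, respectively $C(\V_{red})\to C(\V'_{red})$. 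The point is to check that the intrinsic operator $K$ from \cite{AL} is, after applying the generators $\La_k$, literally a matrix of such classical solution operators plus bounded error terms; its kernel has the same Henkin-type singularity $\ordo(|\zeta-z|^{-(2n-1)})$ on the diagonal of $\V_{red}\times\V_{red}$, and the extra holomorphic/antiholomorphic factors coming from the resolution structure of $\Ok_X$ are smooth and hence harmless for the estimate. Solving the triangular system from the bottom up (lowest-order part of $\psi$ first, then feeding corrections upward) produces $\psi_k = \La_k\psi$ satisfying $\sum_k\int_{\V'_{red}}|\psi_k|^p\,dV \le C_p^p\sum_k\int_{\V_{red}}|\phi_k|^p\,dV$, which is exactly the claimed estimate; the $p=\infty$ case is identical with sup-norms. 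One must also check that the object so produced is a genuine element $\psi\in L^p_{0,0}(\V')$, i.e.\ that the tuple $(\La_k\psi)$ actually comes from a section of $\A_X^{0,0}$ over $\V'$; this follows because $K\phi$ is defined intrinsically on $X$ in \cite{AL} and we have only been computing its norm.

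**The hard part will be** the bookkeeping of the coupling between the different orders of differential operators in $\N_X$: applying $\dbar$ and the generators $\La_k$ does not commute on the nose on a non-reduced space, so the system one gets is only triangular after a careful choice of generators adapted to a filtration of $\N_X$ by order (or, equivalently, to the Hilbert-Samuel/normal filtration of $\Ok_X$), and one must verify that the commutator/correction terms are of strictly lower order and are represented by smooth kernels so that the induction closes with constants depending only on $\V,\V',p$ and the finitely many generators. A secondary technical point is the passage from the estimate on smooth forms $\E_X^{0,*}$ — where the integral formula and the norm identities hold classically — to the completions $L^p_{0,*;X}$ and $C_{0,*;X}$: this is a routine density and continuity argument, since $K$ extends by continuity once it is bounded on the dense subspace and $\dbar$ is closed, but it should be stated. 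I expect everything else — the choice of the strictly pseudoconvex neighborhoods, the classical $L^p$ and sup-norm estimates on $\V_{red}$, and the equivalence of the patched global norm with the local ones — to be standard.
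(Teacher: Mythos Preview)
Your overall strategy --- use the intrinsic operator $\K$ from \cite{AL} and estimate $|\K\phi|_X$ by pushing through the generators of $\N_X$ --- is the right starting point and matches the paper's. But the proposal has real gaps.

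First, a misdiagnosis: the generators $\La\in\N_X$ \emph{do} commute with $\dbar$. From the defining relation $\pi_*(\phi\mu)=\La\phi\, d\zeta$ with $\dbar\mu=0$ one gets $\dbar(\La\phi)=\La(\dbar\phi)$ immediately. So there is no triangular system with lower-order corrections to solve; each $\La_k\phi$ is already $\dbar$-closed on $Z$. The difficulty is not that $\dbar$ and $\La_k$ fail to commute, but that $\K$ and $\La_k$ do: one must express $\La_k(\K\phi)$ as a bounded operator on the tuple $(\La_j\phi)_j$, and this is not bookkeeping.

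Here is the actual obstacle. The norm $|\cdot|_X$ involves $z$-derivatives (cf.~\eqref{tomat2}), and when these hit $\K\phi(z,w)$ they land on the Bochner--Martinelli kernel $B$ and increase its singularity; your claim that ``the extra holomorphic/antiholomorphic factors \dots\ are smooth and hence harmless'' overlooks this. On the simple space $\hat X$ one handles it via $\partial_z B=-\partial_\zeta B$ and integration by parts onto $\phi$, cf.~\eqref{plupp}--\eqref{motvilja}; this works because $\hat H^0_\kappa\hat R_\kappa$ is explicit. For general $X$, neither $H^0_\kappa$ nor $R_\kappa$ has such structure. The paper's route is to embed $X\hookrightarrow\hat X$, use $|\phi|_X\sim\sum_j|\gamma_j\phi|_{\hat X}$ (cf.~\eqref{tomat4}), and compare $H^0_\kappa R_\kappa$ with $\hat H^0_\kappa\hat R_\kappa$ via a new comparison lemma for Hefer morphisms (Lemma~\ref{kotor}). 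This produces correction terms involving $\delta_\eta C^0_\kappa$, and a Stokes-type identity (Lemma~\ref{lma:intByParts}) converts them into terms with no $B$ factor (trivially bounded), terms containing $\dbar\phi$ (which vanish), and terms of the form $\dbar_{z,w}(\cdots)$. These last terms vanish for degree reasons \emph{only} when $\phi$ is a $(0,1)$-form --- this is precisely why the theorem is restricted to $q=1$, something your outline does not account for.

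Finally, the density/continuity step is not routine either. One cannot approximate a $\dbar$-closed $\phi\in L^p_{0,1}(\V)$ by smooth \emph{$\dbar$-closed} forms (cf.~Remarks~\ref{ugn} and~\ref{heuristic}); instead one approximates by smooth $\phi_k$ not assumed closed, splits $\gamma\K\phi_k=G\Phi_k+G'(\dbar\Phi_k)$, and shows separately that $G$ obeys the a~priori $L^p$-bound and that $G'(\dbar\Phi_k)\to 0$ in the current sense. The sheaves $\Cu^{0,*}_X$ of Section~\ref{intrinsic} are introduced precisely to make this limiting argument rigorous.
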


By standard sheaf theory, and the fact that
$\psi \in L^p_{0,0;X}$ and $\dbar\psi=0$ implies that  $\psi \in \Ok_X$, see Lemma~\ref{lma:dbarClosed} and \eqref{inka}, we get the following corollaries.

\begin{cor}
Assume that $X$ is a compact analytic space such that $\Ok_X$ is Cohen-Macaulay, $X_{red}$ is smooth.
If  $\phi\in L^p_{0,1}(X)$, $\dbar\phi=0$  and
the cohomology class of $\phi$ in $H^1(X,\Ok_X)$ vanishes, then there is
$\psi \in L^p_{0,0}(X)$ such that $\dbar \psi=\phi$.
If $\phi \in C_{0,1}(X)$, then there is a solution $\psi\in C_{0,0}(X)$.
\end{cor}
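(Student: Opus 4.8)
The statement follows from Theorem~\ref{main} by standard homological algebra, and I would argue as follows.

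Recall that $L^p_{0,q;X}$ and $C_{0,q;X}$ are sheaves on $X$, with sections over an open set $\V$ the forms in $L^p_{0,q}(\V)$ and $C_{0,q}(\V)$ respectively. Since multiplication by a smooth cut-off function maps each of these classes into itself and $X_{red}$ carries smooth partitions of unity, every $L^p_{0,q;X}$ and every $C_{0,q;X}$ is a fine sheaf on the (paracompact, indeed compact) space $X$, so that $H^j(X,L^p_{0,q;X})=0$ and $H^j(X,C_{0,q;X})=0$ for all $j\ge 1$. By Lemma~\ref{lma:dbarClosed} and \eqref{inka}, the kernel of $\dbar\colon L^p_{0,0;X}\to L^p_{0,1;X}$ is $\Ok_X$, and likewise for the $C$-complex.

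Next I would extract local exactness at degree one from Theorem~\ref{main}. Put $\mathcal{Z}^1:=\Ker\big(\dbar\colon L^p_{0,1;X}\to L^p_{0,2;X}\big)$, a subsheaf of $L^p_{0,1;X}$. Trivially $\dbar$ maps $L^p_{0,0;X}$ into $\mathcal{Z}^1$ because $\dbar^2=0$; conversely, Theorem~\ref{main} — of which only the existence of a local solution, not the norm estimate, is needed here — says that near any point of $X$ every $\dbar$-closed $L^p$ $(0,1)$-form is $\dbar$ of an element of $L^p_{0,0}$, so $\dbar\colon L^p_{0,0;X}\to\mathcal{Z}^1$ is an epimorphism of sheaves. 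Together with the previous paragraph this yields a short exact sequence
$$
0\longrightarrow \Ok_X\longrightarrow L^p_{0,0;X}\overset{\dbar}{\longrightarrow}\mathcal{Z}^1\longrightarrow 0 ,
$$
and the same with $C$ in place of $L^p$.

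Finally I would take the associated long exact cohomology sequence over $X$. Since $H^1(X,L^p_{0,0;X})=0$ by fineness, it gives an isomorphism
$$
H^1(X,\Ok_X)\;\cong\;H^0(X,\mathcal{Z}^1)\big/\dbar\,H^0(X,L^p_{0,0;X}) .
$$
By left-exactness of global sections, $H^0(X,\mathcal{Z}^1)$ is exactly the space of global $L^p$ $(0,1)$-forms $\phi$ with $\dbar\phi=0$, and $\dbar H^0(X,L^p_{0,0;X})=\dbar\big(L^p_{0,0}(X)\big)$; moreover the map of complexes induced by the identity on $\E^{0,*}_X$ identifies this quotient both with the analogous one for the $C$-complex and with the usual class map into $H^1(X,\Ok_X)$, so a $\dbar$-closed $\phi$ represents its own class in all three models. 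Hence the vanishing of the cohomology class of $\phi$ in $H^1(X,\Ok_X)$ is equivalent to $\phi=\dbar\psi$ for some $\psi\in L^p_{0,0}(X)$, proving the first assertion; the identical computation with the $C$-complex gives the second. The argument involves no genuinely hard step once Theorem~\ref{main} is in hand; the point that requires a little care is the surjectivity of $\dbar\colon L^p_{0,0;X}\to\mathcal{Z}^1$, which is precisely Theorem~\ref{main} (and is the only place where the hypotheses that $\Ok_X$ is Cohen--Macaulay and $X_{red}$ is smooth enter). Note in particular that, since only $H^1$ is at stake, exactness of the complex in degrees $\ge 2$ is never used, so the restriction of Theorem~\ref{main} to $(0,1)$-forms is harmless.
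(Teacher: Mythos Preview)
Your argument is correct and is exactly the ``standard sheaf theory'' the paper invokes: the paper does not spell out a proof but only remarks that $\phi$ defines a \v{C}ech class via the differences $\psi_j-\psi_k$ of local solutions, which is precisely the connecting homomorphism in your long exact sequence. The only ingredients used---fineness of the $L^p$ (resp.\ $C$) sheaves, the identification of $\Ker\dbar$ with $\Ok_X$ via Lemma~\ref{lma:dbarClosed} and \eqref{inka}, and local solvability from Theorem~\ref{main}---are the ones the paper cites.
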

Notice that $\phi$ defines a \v{C}ech cohomology class in $H^1(X,\Ok_X)$ through $(\psi_j-\psi_k)_{j,k}$,
where $(\psi_j)$ are local $\dbar$-solutions on a covering $\U_j$ of $X$.

\begin{cor}
Assume that $X$ is an Stein space such that $\Ok_X$ is Cohen-Macaulay and $X_{red}$ is smooth.
If $\phi\in L^p_{0,1,loc}(X)$ is $\dbar$-closed, then there is $\psi\in L^p_{0,0,loc}(X)$
such that $\dbar\psi=\phi$.
If $\phi\in C_{0,1}(X)$, then there is a solution $\psi\in C_{0,0}(X)$.
 \end{cor}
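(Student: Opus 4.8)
The plan is to patch the local solutions furnished by Theorem~\ref{main} by means of the vanishing of $H^1(X,\Ok_X)$ on a Stein space. First I would fix a $\dbar$-closed $\phi\in L^p_{0,1,loc}(X)$ and choose a locally finite open cover $\{\V_j\}$ of $X$ such that, for each $j$, Theorem~\ref{main} provides $\psi_j\in L^p_{0,0}(\V_j)$ with $\dbar\psi_j=\phi$ on $\V_j$. Such a cover exists because the theorem yields $L^p$-solvability on a neighborhood of every point of $X$, and because the underlying space is paracompact the cover may be taken countable and locally finite.

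Next, on each overlap $\V_j\cap\V_k$ the difference $c_{jk}:=\psi_j-\psi_k$ is a $\dbar$-closed element of $L^p_{0,0;X}$, hence by Lemma~\ref{lma:dbarClosed} together with \eqref{inka} it lies in $\Ok_X(\V_j\cap\V_k)$. The family $\{c_{jk}\}$ is therefore a \v{C}ech $1$-cocycle with values in the coherent structure sheaf $\Ok_X$. Since $X$ is Stein, Cartan's Theorem~B gives $H^1(X,\Ok_X)=0$; and as the natural map $\check H^1(\{\V_j\},\Ok_X)\to H^1(X,\Ok_X)$ is injective, the cocycle is a coboundary, i.e.\ there are $h_j\in\Ok_X(\V_j)$ with $c_{jk}=h_j-h_k$ on $\V_j\cap\V_k$.

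With the $h_j$ in hand I would set $\psi:=\psi_j-h_j$ on each $\V_j$. On overlaps $\psi_j-h_j=\psi_k-h_k$, so $\psi$ is a well-defined global function; it satisfies $\dbar\psi=\dbar\psi_j=\phi$, each $h_j$ being holomorphic, and it lies in $L^p_{0,0,loc}(X)$ because $\psi_j$ is locally $L^p$ and $h_j\in\Ok_X$ is locally bounded in the intrinsic norm $|\cdot|_X$. The $C^0$-statement follows by the identical argument, using instead the sup-norm part of Theorem~\ref{main} to produce local solutions $\psi_j\in C_{0,0}(\V_j)$; the differences are again in $\Ok_X$, the same coboundary $h_j-h_k$ repairs them, and one obtains $\psi\in C_{0,0}(X)$.

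The substantive work is entirely contained in Theorem~\ref{main} and in the regularity statement (Lemma~\ref{lma:dbarClosed}) that a $\dbar$-closed $L^p$ function is holomorphic; granting these, the global passage is the classical \v{C}ech--Dolbeault descent whose only genuine external input is Cartan's Theorem~B. Accordingly there is no single hard analytic obstacle at this stage; the points that deserve care are precisely the two just used, namely that the cocycle takes values in $\Ok_X$ rather than merely in the sheaf of $\dbar$-closed $L^p$ functions (guaranteed by the cited lemma, and already noted in the remark following the preceding corollary), and that membership of $\psi$ in $L^p_{0,0,loc}$ survives the holomorphic correction, which holds because sections of $\Ok_X$ are bounded on compacts with respect to $|\cdot|_X$.
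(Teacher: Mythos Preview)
Your argument is correct and is precisely the ``standard sheaf theory'' argument the paper invokes: local solutions from Theorem~\ref{main}, holomorphicity of the differences via Lemma~\ref{lma:dbarClosed} and \eqref{inka}, and then Cartan's Theorem~B to kill the resulting \v{C}ech $1$-cocycle. The only cosmetic point is that Theorem~\ref{main} yields the solution on the smaller set $\V'\subset\subset\V$, so the cover should be taken to consist of such $\V'_j$; otherwise the proof matches the paper's intended route.
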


The proof of Theorem~\ref{main}  relies on the integral formulas in \cite{AL} in combination
with a new notion of sheaves of $\Cu_X^{0,*}$ of $(0,*)$-currents on $X$
which provide a fine resolution of $\Ok_X$ (see Section~\ref{intrinsic}). These sheaves should
have an independent interest.  In Remark~\ref{heuristic} we give a heuristic argument for
Theorem~\ref{main} which relies on these sheaves but with no reference to integral formulas.

We first consider a certain kind of
"simple" non-reduced space for which we prove these $L^p$-estimates for all $(0,q)$-forms, see Section~\ref{orm}.
We then prove the general case by means of a local embedding $X\to \hat X$, where $\hat X$ is simple. To
carry out the proof we need comparison results between the constituents in the integral formulas for
the two spaces. One of them is provided by \cite{LarComp}, whereas another one,
for the so-called Hefer mappings, is new,  see Section~\ref{comparison}.
The proof of Theorem~\ref{main} is in Section~\ref{sect:MainProof}.
Technical difficulties restrict us, for the moment,  to the case with $(0,1)$-forms.

We have no idea of whether one could prove Theorem~\ref{main},
e.g., in case $p=2$,  by $L^2$-methods.

The assumption that $X_{red}$ be smooth and $X$ be  Cohen-Macaulay  is crucial in this paper.
In the reduced case considerable difficulties appear already with
the presence of an isolated singularity;
besides the references already mentioned above,
see, e.g., \cite{DFV,FoGa, Nag, OvVass, RZ}.   In the non-reduced case even when $X_{red}$ is smooth, an isolated
non-Cohen-Macaulay point offers new difficulties. We discuss such an example in
Section~\ref{nonch}.

Throughout this paper $X$ is a non-reduced space of pure dimension $n$ and the underlying reduced
space $Z=X_{red}$ is smooth, if nothing else is explicitly stated.

\section{Some preliminaries}\label{prel}
Let $Y$ and $Y'$ be complex manifolds and
$f\colon Y'\to Y$ a proper mapping. If $\tau$ is a current on $Y$, then the push-forward, or direct image,
$f_*\tau$ is defined by the relation $f_*\tau.\xi=\tau.f^*\xi$ for test forms $\xi$.  If $\alpha$ is a smooth form on
$Y$, then we have the simple but useful relation
\begin{equation}\label{tomater}
\alpha\w f_*\tau=f_*( f^*\alpha\w \tau).
\end{equation}
In \cite{AW2,AS} was introduced the sheaf of
{\it pseudomeromorphic currents} on $Y$. 
Roughly speaking a \pmm current is the direct image under a holomorphic mapping of
a smooth form times a tensor product of one-variable principal value current
$1/z_j^m$ and $\dbar(1/z_k^{m'})$.
This sheaf is closed under $\dbar$ and under multiplication by smooth forms.
If a pseudomeromorphic current $\tau$ has support on a subvariety $V$ and the holomorphic function $h$ vanishes
on $V$, then $\bar h \tau =0$ and $d \bar  h\w \tau=0$. This leads to the crucial {\it dimension principle}

\begin{prop} \label{prop:dim}
    Let $\tau$ be a pseudomeromorphic current of bidegree $(*,q)$, and assume that
   the support of $\mu$ is contained in a subvariety of codimension $> q$. Then $\tau = 0$.
\end{prop}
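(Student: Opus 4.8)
The statement is local, so it suffices to prove: if $\tau$ is a pseudomeromorphic current of bidegree $(*,q)$ on an open subset of a complex manifold and $\supp\tau$ is contained in a subvariety $V$ whose irreducible components all have codimension $>q$, then $\tau=0$. I would argue by induction on $\dim V$, the case $V=\emptyset$ being trivial. Since $\dim V_{sing}<\dim V$ and every component of $V_{sing}$ still has codimension $>q$, the induction hypothesis yields $\tau=0$ as soon as we know that $\supp\tau\subseteq V_{sing}$. Hence the whole point is to show that $\supp\tau$ does not meet the regular locus $V_{reg}$.

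Fix $p\in V_{reg}$ and choose holomorphic coordinates $(z_1,\dots,z_N)$ centered at $p$ in which $V=\{z_1=\cdots=z_c=0\}$ near $p$, where $c=\codim_pV\ge q+1$. Each of $z_1,\dots,z_{q+1}$ vanishes on $V$ and hence on $\supp\tau$, so by the property of pseudomeromorphic currents recalled just before Proposition~\ref{prop:dim} we have $d\bar z_j\wedge\tau=0$ for $j=1,\dots,q+1$ in a neighborhood of $p$. I would then use the elementary fact that, for any current $T$, the equation $d\bar z_j\wedge T=0$ forces $T=d\bar z_j\wedge a$ for some current $a$: writing $T=d\bar z_j\wedge a+b$ with $a,b$ not involving $d\bar z_j$, one has $d\bar z_j\wedge T=d\bar z_j\wedge b$, and linear independence of the basic forms $dz_I\wedge d\bar z_J$ gives $b=0$. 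Applying this successively for $j=1,2,\dots,q+1$ to $\tau$ — at the passage from $k$ to $k+1$ one has $\tau=d\bar z_1\wedge\cdots\wedge d\bar z_k\wedge a$ with $a$ not involving $d\bar z_1,\dots,d\bar z_k$, and the relation $d\bar z_{k+1}\wedge\tau=0$ lets one split off a further factor $d\bar z_{k+1}$ — we reach $\tau=d\bar z_1\wedge\cdots\wedge d\bar z_{q+1}\wedge\tau'$ near $p$ for some current $\tau'$. But $d\bar z_1\wedge\cdots\wedge d\bar z_{q+1}$ has anti-holomorphic degree $q+1$, whereas $\tau$ has anti-holomorphic degree $q$, so $\tau=0$ near $p$. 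As $p\in V_{reg}$ was arbitrary, $\supp\tau\cap V_{reg}=\emptyset$, which closes the induction.

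The step that I expect to require genuine care is the iterated extraction of the factor $d\bar z_1\wedge\cdots\wedge d\bar z_{q+1}$: at each stage one must check that splitting off $d\bar z_{k+1}$ is compatible with the differentials $d\bar z_1,\dots,d\bar z_k$ already extracted, i.e. that $d\bar z_1\wedge\cdots\wedge d\bar z_{k+1}\wedge a''=0$ implies $a''=0$ whenever $a''$ involves none of $d\bar z_1,\dots,d\bar z_{k+1}$ — once more just linear independence of the basic forms $dz_I\wedge d\bar z_J$ together with a bookkeeping of signs. Everything else is routine: pseudomeromorphicity enters only through the vanishing $d\bar h\wedge\tau=0$ for $h$ vanishing on $\supp\tau$; the adapted coordinates exist precisely because $p$ is a regular point of $V$; and the chain $V\supsetneq V_{sing}\supsetneq(V_{sing})_{sing}\supsetneq\cdots$ terminates because dimensions strictly decrease.
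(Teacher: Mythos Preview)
The paper does not actually give a proof of Proposition~\ref{prop:dim}; it is recalled as a known preliminary fact about pseudomeromorphic currents (introduced in \cite{AW2,AS}), with the sentence ``This leads to the crucial dimension property'' indicating only that the vanishing $\bar h\,\tau=0$, $d\bar h\wedge\tau=0$ is the key input. So there is no paper-proof to compare against beyond that hint.

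Your argument is correct and is precisely the standard way one deduces the dimension principle from those vanishing properties. The induction on $\dim V$ reducing to $V_{sing}$ is fine: $V_{sing}$ is again an analytic set, strictly lower-dimensional, hence of codimension $>q$, and the property $d\bar h\wedge\tau=0$ applies to any subvariety containing $\supp\tau$, in particular to $V_{sing}$ at the next step. The linear-algebra extraction of $d\bar z_1\wedge\cdots\wedge d\bar z_{q+1}$ is sound: the decomposition of a current according to whether a given $d\bar z_j$ occurs is unique, and wedging with $d\bar z_1\wedge\cdots\wedge d\bar z_{k+1}$ is injective on currents whose coefficients involve none of $d\bar z_1,\ldots,d\bar z_{k+1}$. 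The degree contradiction at the end is exactly right. One cosmetic remark: you might also note that $\bar z_j\tau=0$ is available and could be used instead (via a Taylor-expansion argument in the normal directions), but your route through $d\bar z_j\wedge\tau=0$ is cleaner and entirely sufficient.
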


We say that a current $a$ is {\it almost semi-meromorphic} in $Y$ if there is a modification $\pi\colon
Y'\to Y$, such that $a$ is the direct image of a form
$\alpha/f$, where $\alpha$ is smooth and $f$ is a holomorphic section of some line bundle on $Y'$.
Assume that $\mu$ is pseudomeromorphic, $a$ is an almost semi-meromorphic current,  $\chi_\epsilon = \chi(|F|^2/\epsilon)$,
where $\chi$ is a smooth cut-off function, and $F$ is a tuple of holomorphic functions such that $\{ F = 0 \}$ contains the set where $a$ is not smooth. Then the limit
\begin{equation} \label{eq:PMlimits}
    \lim_{\epsilon \to 0} \chi_\epsilon a \wedge \mu
  \end{equation}
 exists  and defines a  \pmm current $a\w\mu$ that is independent of the choice of $\chi$. We define
 $\dbar a\w\mu:=\dbar(a\w \mu)-(-1)^{\deg a} a\w \dbar \mu$. It is readily verified that
$  \dbar a\w \mu= \lim_{\epsilon \to 0} \dbar\chi_\epsilon \wedge a \wedge \mu$.
By Hironaka's theorem, any almost semi-meromorphic current is pseudomeromorphic.

\smallskip

Let $\U\subset\C^N$ be an open set, let $Z$ be a submanifold of dimension $n<N$, and
let $\kappa=N-n$.
The $\Ok_\U$-sheaf of Coleff-Herrera currents,
 $\CH_\U^Z$, see \cite{BjAbel}, consists of all $\dbar$-closed $(N,\kappa)$-currents in $\U$ with support on $Z$
that are annihilated by $\bar\J_Z$, i.e., by all $\bar h$ where $h$ is in $\J_Z$.  If $\J\subset\Ok_\U$ is
 an ideal sheaf with zero set $Z$, then  
$\Homs(\Ok_\U/\J,\CH_\U^Z)$ is the subsheaf of
$\mu$ in  $\CH_\U^Z$ that are annihilated by $\J$.  It is well-known that
$\Homs(\Ok_\U/\J,\CH_\U^Z)$ is coherent, cf.\ e.g., \cite[Theorem~1.5]{Aext}

\begin{remark} If $Z$ is not smooth, then $\CH_\U^Z$ is  defined in the same way, but one needs
an additional regularity condition, the so-called standard extension property, SEP, see, e.g., \cite[Section~2.1]{AL}.
When $Z$ is smooth, the currents in $\CH_\U^Z$ (with the definition given here) admit an expansion as in \cite[(3.4)]{ACH},
and so the SEP follows.
\end{remark}

Let us recall some properties of residue currents associated to a locally free resolution
\begin{equation}\label{krokus}
	0\to \Ok(E_{N_0})\stackrel{f_{N_0}}{\to} \Ok(E_{N_0-1})\cdots \stackrel{f_1}{\to} \Ok(E_0)\to 0
\end{equation}
of a coherent (ideal) sheaf $\Ok_{\U}/\J$.
The precise definitions and claimed results can all be found in \cite{AW1}.
Let us denote the complex \eqref{krokus} by $(E,f)$. Assume that the vector bundles $E_k$ are equipped with hermitian metrics.
The corresponding complex of vector bundles is pointwise exact on $\U\setminus Z$, where $Z=Z(\J)$.
There are associated currents $U$ and $R$. The current $U$ is almost semi-meromorphic on $\U$ and smooth on $\U\setminus Z$, and takes values in $\Hom(E,E)$.
The current $R$ is a pseudomeromorphic current on $\U$ that takes values in $\Hom(E_0,E)$ and has support on $Z$. One may write $R=\sum_k R_k$,
where $R_k$ is a $(0,k)$-current that takes values in $\Hom(E_0,E_k)$.
They satisfy the relation
\begin{equation}\label{plutt}
(f-\dbar)\circ U+U\circ (f-\dbar)= I_E- R.
\end{equation}
Here we use the compact notation $E=\oplus E_k$, $f=\sum f_k$.
By the dimension principle $R_k=0$ if $k < \kappa = \codim \J$.
In particular, since $(f-\dbar)^2=0$, it follows by \eqref{plutt} that $(f-\dbar)R=0$, so
\begin{equation} \label{eq:fR}
    f_{\kappa} R_\kappa = \dbar R_{\kappa-1} = 0.
\end{equation}
Moreover, $R$ is annihilated by $\bar\J_Z$, and it satisfies the duality principle
\begin{equation}\label{duality}
R \Phi=0  \ \   \text{if and only if} \ \  \Phi\in \J.
\end{equation}
We will typically assume that the resolution is chosen to be minimal at level $0$, i.e, such that $E_0 \cong \Ok$.
Thus, $\Hom(E_0,E_k) \cong E_k$, so we may consider $R_k$ as an $E_k$-valued current.
If $\Ok_\U/\J$ is Cohen-Macaulay, then we can choose the resolution so that $N_0=\kappa$.
Then it follows
that $R$ consists of the only term $R_\kappa$ that takes values in
$E_\kappa$, and from \eqref{plutt} that
$\dbar R_\kappa=0$.
We conclude that the components $\mu_1,\ldots,\mu_\rho$ of
$R_\kappa$, $\rho=\rank E_\kappa$,
are  in the sheaf $\Homs(\Ok_\U/\J,\CH_\U^Z)$.
It is proved in \cite[Example~1]{Aext} that these components $\mu_j$ actually generate
this sheaf.  It follows from \eqref{duality} that
\begin{equation}\label{duality1}
\mu\Phi=0, \ \mu\in  \Homs(\Ok_\U/\J,\CH_\U^Z), \ \   \text{if and only if} \ \  \Phi\in \J.
\end{equation}
By continuity \eqref{duality1} holds everywhere if $\J$ is has pure dimension.

\section{Pointwise norm on a non-reduced space $X$}\label{pnorm}
Recall that $X$ is  a non-reduced space of pure dimension $n$
with smooth underlying manifold $X_{red}=Z$.

Consider a local embedding $i\colon X\to \U\subset\C^N$ and assume that
$\pi \colon \U \to Z\cap\U$ is a submersion. Possibly after shrinking
$\U$ we can assume that we have coordinates
$(\zeta,\tau)=(\zeta_1,\ldots, \zeta_n, \tau_1,\ldots, \tau_\kappa)$ in $\U$
so that $Z\cap \U=\{ \tau=0\}$
and $\pi$ is the projection $(\zeta,\tau)\mapsto \zeta$.
Let $d\zeta=d\zeta_1\w\ldots, d\zeta_n$.

If $\mu$ is a section of $\Homs(\Ok_\U/\J,\CH_\U^Z)$ in $\U$, then
\begin{equation}\label{tomata}
\pi_*(\phi\mu)=: \La\phi \, d\zeta
\end{equation}
defines a holomorphic differential operator $\La\colon \Ok(X\cap\U)\to \Ok(Z\cap\U)$.
Following \cite[Section~1]{Anorm} we define $\N_X$ as the set of all such local operators $\La$ obtained from
some  $\mu$ in
$\Homs(\Ok_\U/\J,\CH_\U^Z)$ and a local submersion.
It follows from \eqref{tomater} and \eqref{tomata} that if $\xi$ is in $\Ok_Z$, then $\xi \La\phi=\La(\pi^*\xi \phi)$.
Thus $\N_X$ is a left $\Ok_Z$-module. It is coherent, in particular locally finitely generated,
and if $\La_j$ is a set of local generators, then $\phi=0$ if and only if  $\La_j\phi=0$ for all $j$, see
\cite[Theorem~1.3]{Anorm}.   If $\La_j$ is a finite set of local generators, therefore
\begin{equation}\label{atomat}
|\phi|^2_X=\sum_j |\La_j\phi|^2
\end{equation}
defines a local norm, and any other finite set of local generators gives rise to an equivalent local norm.

\begin{ex}\label{simple1}
Assume we have a local embedding and local coordinates $(\zeta,\tau)$ as above in $\U$.
Let  $M=(M_1,\ldots,M_\kappa)$ be a tuple of non-negative integers and consider the ideal
sheaf
$$
\I=\left< \tau_1^{M_1+1},\ldots, \tau_\kappa^{M_{\kappa}+1}\right>.
$$
Let $\hat X$ be the analytic space with structure sheaf $\Ok_{\hat X}=\Ok_\U/ \I$.
Consider the tensor product of currents
\begin{equation}\label{hatmu}
\hat\mu=\dbar\frac{d\tau_1}{\tau_1^{M_1+1}}\w\ldots\w\dbar\frac{d\tau_\kappa}{\tau_\kappa^{M_\kappa+1}},
\end{equation}
where $d\tau_j/\tau_j^{M_j+1}$ is the  principal value current.
We recall that if $\varphi = \varphi_0(\zeta,\tau) d\zeta \wedge d\bar{\zeta}$ is a test form, then
\begin{equation}\label{hatmuintegral}
\hat\mu.\varphi=\dbar\frac{d\tau_1}{\tau_1^{M_1+1}}\w\ldots\w\dbar\frac{d\tau_\kappa}{\tau_\kappa^{M_\kappa+1}}.\varphi
=  \frac{(2\pi i)^\kappa}{M!} \int_{\zeta} \frac{\partial \varphi_0}{\partial \tau^M} (\zeta,0) d\zeta \wedge d\bar{\zeta},
\end{equation}
where $M!=M_1!\cdots M_\kappa!$.
It follows, e.g., by \cite[Theorem~4.1]{ACH} that $\hat\mu\w d\zeta$ is a generator for the
$\Ok_\U$-module (and $\Ok_{\hat X}$-module)
$\Homs(\Ok/\I,\CH_\U^Z)$.
For a multiindex $m$, we will use the short-hand notation
\begin{equation} \label{eq:shorthand}
    \dbar\frac{d\tau}{\tau^m} = \dbar\frac{d\tau_1}{\tau_1^{m_1}}\w\ldots\w\dbar\frac{d\tau_\kappa}{\tau_\kappa^{m_\kappa}}.
\end{equation}
Moreover, $m\le M$ means that $m_j\le M_j$ for $j=1,\ldots, \kappa$.
It is readily verified that
\begin{equation} \label{eq:simpleTransformationLaw}
	\tau^\beta \dbar\frac{d\tau}{\tau^\alpha} = \dbar\frac{d\tau}{\tau^{\alpha-\beta}}
\end{equation}
if $\beta\leq \alpha$.
Any $\psi$ in $\Ok_{\hat X}$ has a unique representative in $\U$ of the form
\begin{equation}\label{simple0}
\psi=\sum_{m\le M} \hat\psi_m(\zeta) \tau^m.
\end{equation}
By \cite[Proposition~3.1]{Anorm},
$$
\La_{m,\beta}:=\frac{\partial^{|m|+|\beta|}}{\partial \tau^m \partial \zeta^\beta}\Big|_{\tau=0},
\quad m\le M, \ |\beta|\le |M-m|,
$$
is a generating set for $\N_{\hat X}$.
If $\Psi(\zeta,\tau)$ is any representative in $\U$ for $\psi$, thus, cf.~\eqref{atomat},   
\begin{equation}\label{tomat2}
|\psi|_{\hat X}\sim \sum_{m\le M, \ |\beta|\le |M-m|}
\Big|\frac{\partial\Psi}{\partial \tau^m \partial \zeta^\beta}(\zeta,0)\Big| \sim
\sum_{m\le M, \ |\beta|\le |M-m|} \Big|\frac{\partial\hat{\psi}_m}{\partial \zeta^\beta}\Big|.
\end{equation}
\end{ex}

Let us now return to the setting of a local embedding $i : X \to \U \subset \C^N$ as above.
Notice that if $M$ is large enough in the example, then $\I\subset \J$.
Let $\mu_1,\ldots, \mu_\rho$ be local generators for the coherent $\Ok_\U$-module $\Homs(\Ok_\U/\J,\CH_\U^Z)$.
Then we have a natural mapping $\Ok_\U/\I\to \Ok_\U/\J$, that is, a mapping
$\iota^*\colon \Ok_{\hat X}\to \Ok_X$. It is natural to say that we have an embedding
$$
\iota \colon X\to \hat X.
$$
It is  well-known, see, e.g., \cite[Theorem~1.5]{Aext}
that there are holomorphic functions
$\gamma_1, \ldots, \gamma_\rho$ (possibly after shrinking
$\U$) such that
\begin{equation}\label{tomat3}
\mu_j=\gamma_j \hat\mu, \quad j=1,\ldots,\rho.
\end{equation}
From \cite[Theorem~1.4]{Anorm} we have that
\begin{equation}\label{tomat4}
|\phi|_X\sim \sum_{j=1}^\rho  |\gamma_j \phi|_{\hat X}.
\end{equation}
In this way the norm $|\cdot |_X$ is thus expressed in terms of the simpler norm $|\cdot|_{\hat X}$.

 \subsection{The norm when $X$ is Cohen-Macaulay}\label{putte0}
 So far we have only used the assumption the $Z$ is smooth. Let us now assume in addition that
 $\Ok_X$ is Cohen-Macaulay. Then one can find monomials $1, \tau^{\alpha_1}, \ldots,\tau^{\alpha_{\nu-1}}$
such that each $\phi$ in $\Ok_X$ has a  unique representative
\begin{equation}\label{skata1}
    \hat \phi=\hat{\phi}_0(z)\otimes 1 +\cdots +\hat{\phi}_{\nu-1}(z)\otimes \tau^{\alpha_{\nu-1}},
\end{equation}
where $\hat{\phi}_j$  are in $\Ok_Z$, see, e.g., \cite[Corollary~3.3]{AL}.
In this way $\Ok_X$
becomes  a free $\Ok_Z$-module (in a non-canonical way).
Let $|\cdot |_{X,\pi}$ be the norm obtained from the  subsheaf $\N_{X,\pi}$ of $\N_X$, consisting of
operators $\La$ obtained, cf.~\eqref{tomata},  from the submersion $\pi$ such that  $(\zeta,\tau)\mapsto \zeta$ in $\U$.
It turns out that
\begin{equation}\label{citron1}
|\phi|^2_{X,\pi} \sim |\hat{\phi}_0(z)|^2+\cdots +|\hat{\phi}_{\nu-1}(z)|^2,
\end{equation}
cf.~\cite[Theorem~1.5]{Anorm}.
By \cite[Proposition~3.4]{Anorm}  the whole sheaf $\N_X$ is generated by $\N_{X,\pi}$ for
a finite number of  generic submersions $\pi^\iota$.  It follows that
\begin{equation}\label{citron2}
|\phi|_X\sim \sum_\iota |\phi|_{X,\pi^\iota}.
\end{equation}

\subsection{The sheaf $\E_X^{0,*}$ of smooth forms on $X$} \label{putte1}

Assume that we have a local embedding $i\colon X\to \U\subset\C^N$. If $\Phi$ is in
$\E_\U^{0,*}$ we say that  $i^*\Phi=0$, or equivalently $\Phi$ is in $\Kers i^*$, if  $\Phi$ is in
$\E_\U^{0,*} \J+\E_\Omega^{0,*}\bar \J_Z
+\E_\U^{0,*} d\bar \J_Z$ on $X_{reg}$, where $\J_Z$ is the radical sheaf of $Z$
and we by $X_{reg}$ denote the set of points of $X$ where $Z$ is smooth and $\Ok_X$ is Cohen-Macaulay.

\begin{remark}
If the underlying reduced space $X_{red}$ is not smooth, or $\Ok_X$ is not Cohen-Macaulay, then this definition of
$\Kers i^*$ is not valid. Instead  is used as definition that $\Phi\w \mu=0$ for all
$\mu$ in $\Homs(\Ok_\U/\J,\CH_\U^Z)$.
However, it is true that  $i^*\Phi=0$ if
$i^*\Phi=0$ where $X_{red}$ is smooth and $X$ is Cohen-Macaulay. See
\cite[Lemma~2.2]{AL}.
\end{remark}

We define $\E_X^{0,*}=\E_\U^{0,*}/\Kers i^*$ and have the natural mapping
$i^*\colon \E_\U^{0,*}\to \E_X^{0,*}$.
By standard arguments one can check that the $\Ok_X$-module $\E_X^{0,*}$ so defined
does not depend on the choice of local embedding.

Each $\La\in \N_X$ extends to a mapping $\E_X^{0,*}\to \E_Z^{0,*}$
so that \eqref{tomata} holds, see \cite[Lemma~8.1]{Anorm}.
If we choose a Hermitian metric on the tangent space $TZ$, we get an induced norm on $\E^{0,*}_Z$,
and so we get a pointwise norm by \eqref{atomat} of smooth $(0,*)$-forms on $X$.
In particular, if $\phi$ and $\xi$ are smooth forms on $X$, then
\begin{equation} \label{timesSmooth}
|\xi\w \phi|_X\le C_\xi |\phi|_X,
\end{equation}
cf.~the remark after \cite[(4.23)]{Anorm}.
Choosing an embedding $\iota\colon X\to \hat X$ as above, \eqref{timesSmooth}
follows from \eqref{tomat2} and \eqref{tomat4}.

\smallskip
If $\Ok_X$ is Cohen-Macaulay and $i\colon X \to \U$ is a local embedding with
coordinates $(\zeta,\tau)$ and a monomial basis $\tau^{\alpha_\ell}$, then we have
a unique local representation \eqref{skata1} of each $\phi$ in $\E_X^{0,*}$ with
$\hat\phi_\ell$ in $\E_Z^{0,*}$, and the other statements in Section~\ref{putte1} hold
verbatim, with the same proofs, for smooth $(0,*)$-forms.

\section{Intrinsic currents on $X$}\label{intrinsic}
In the reduced case one can define currents just as dual elements of smooth forms.
In the non-reduced case one has to be cautious because there are two natural kinds of currents;
suitable limits of smooth forms and dual elements of smooth forms. We have to deal with both kinds.
In this paper, the former type appears as $(0,*)$-currents, while the latter appears as $(n,*)$-currents.
In \cite{ALp}, we study the $\dbar$-equation on a non-reduced space for general $(p,q)$-forms,
and then both type of currents appear in arbitrary bidegrees.

\subsection{The sheaf of currents $\Cu_\U^Z$}
Let $\U\subset\C^N$ be an open subset and $Z$ a submanifold as before.
Let $\Cu_\U^Z$ be the $\Ok_\U$-sheaf of all $(N,*)$-currents in $\U$ that are
annihilated by $\bar \J_Z$ and $d \bar \J_Z$. Clearly these currents have support on $Z$.

\begin{lma}\label{allan}
 If $(\zeta,\tau)$ are local coordinates in $\U$ so that $Z=\{\tau=0\}$,
  then each current $\mu$ in $\Cu_\U^Z$
has a unique representation
\begin{equation}\label{palt}
\mu=\sum_{\alpha\ge 0} a_\alpha(\zeta) \wedge \dbar\frac{d\tau}{\tau^{\alpha+\1}}\w d\zeta,
\end{equation}
where $a_\alpha$ are in $\Cu_Z^{0,*}$ and the sum is finite, and we use the short-hand notation \eqref{eq:shorthand}.
\end{lma}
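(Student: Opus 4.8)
The statement asserts a structure theorem for currents in $\Cu_\U^Z$: every such $(N,*)$-current is a finite sum $\sum_\alpha a_\alpha \wedge \dbar(d\tau/\tau^{\alpha+\1})\wedge d\zeta$ with $a_\alpha \in \Cu_Z^{0,*}$, uniquely. I would prove existence and uniqueness separately. The essential point is that annihilation by $\bar\J_Z$ and $d\bar\J_Z$ forces a current supported on $Z = \{\tau = 0\}$ to be, in the $\tau$-directions, a finite combination of the $(N,\kappa)$-"vertical" currents $\dbar(1/\tau_1^{\alpha_1+1})\wedge\cdots\wedge\dbar(1/\tau_\kappa^{\alpha_\kappa+1})$, with coefficients that are currents purely on $Z$.

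**Existence.** First I would reduce to a local statement at a point of $Z$ and use the coordinates $(\zeta,\tau)$. Since $\mu$ has support on $Z$ and is of bidegree $(N,*)$, it is annihilated by $\bar\tau_j$ and $d\bar\tau_j$ for each $j$; being an $(N,*)$-current it also cannot contain any $d\zeta_i$ or $d\bar\zeta_i$ beyond the full holomorphic form $d\zeta = d\zeta_1\wedge\cdots\wedge d\zeta_n$, nor any $d\tau_j$. Hence $\mu = \nu \wedge d\zeta$ where $\nu$ is a $(0,*)$-current (in all $N$ variables) killed by all $\bar\tau_j$ and $d\bar\tau_j$. The standard structure theorem for currents annihilated by $\bar\tau_j$ and $d\bar\tau_j$ — a one-variable-at-a-time argument, or an appeal to the expansion for Coleff–Herrera-type currents as in \cite[(3.4)]{ACH} / \cite[Theorem~4.1]{ACH} — gives that such $\nu$ is a finite sum $\sum_{\alpha\ge 0} \tilde a_\alpha(\zeta,\bar\zeta)\,\dbar(1/\tau^{\alpha+\1})\wedge(\text{antiholomorphic }\bar\zeta\text{-part absorbed into }\tilde a_\alpha)$, where the coefficients depend only on $\zeta,\bar\zeta,d\bar\zeta$, i.e.\ pull back from $Z$. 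Writing $a_\alpha$ for the corresponding $(0,*)$-current on $Z$ and tracking the sign/orientation conventions in the shorthand \eqref{eq:shorthand}, one obtains \eqref{palt}. The finiteness of the sum follows because $\mu$ is a current of finite order, so only finitely many $\tau$-derivatives at $\tau=0$ can occur.

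**Uniqueness.** Uniqueness is the point where one must be slightly careful. Suppose $\sum_{\alpha} a_\alpha \wedge \dbar(d\tau/\tau^{\alpha+\1})\wedge d\zeta = 0$. I would test against forms of the shape $\xi(\zeta,\bar\zeta)\,\tau^\beta\,d\bar\zeta$ for multiindices $\beta$ and compactly supported $\xi$: by \eqref{hatmuintegral} (its evident multi-coefficient analogue), pairing picks out, up to a nonzero constant $(2\pi i)^\kappa\beta!/\cdots$, exactly the integral over $Z$ of the coefficient $a_\beta$ against $\xi$, all other terms vanishing because $\tau^\beta\,\dbar(d\tau/\tau^{\alpha+\1}) = 0$ unless $\alpha \le \beta$, and the $\tau$-derivative of order $\beta$ of $\tau^\gamma$ at $\tau = 0$ vanishes unless $\gamma = 0$. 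Hence each $a_\beta = 0$ as a current on $Z$. (One subtlety: $a_\alpha$ is $(0,*)$ of possibly positive degree, so "testing" means pairing with test $(n,n-\deg a_\alpha)$-forms on $Z$; the argument is the same.)

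**Main obstacle.** The routine part is the reduction to $\nu\wedge d\zeta$ and the bookkeeping of differential-form degrees and orientation signs in the shorthand \eqref{eq:shorthand}. The genuinely substantive step is the structure theorem for a $(0,*)$-current in $\C^N$ annihilated by all $\bar\tau_j$ and all $d\bar\tau_j$: one must show such a current is a \emph{finite} $\Ok_Z$(-or rather $\E_Z^{0,*}$-)combination of the vertical currents $\dbar(1/\tau^{\alpha+\1})$ and nothing else. I would get this by induction on $\kappa$, peeling off one $\tau$-variable at a time: a distribution in one variable $\tau_j$ supported at $\tau_j=0$ and killed by $\bar\tau_j,d\bar\tau_j$ is a finite sum of derivatives $\partial^{m}\delta_0$, which corresponds precisely to $\dbar(1/\tau_j^{m+1})$ up to a constant, the finiteness coming from the finite order of the current. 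If one prefers to avoid reproving this, it is exactly \cite[Theorem~4.1]{ACH} (or the expansion \cite[(3.4)]{ACH}) applied in these coordinates, and I would cite that.
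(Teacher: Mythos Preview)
Your proof is correct but takes a different route from the paper's. The paper proceeds constructively: it \emph{defines} the coefficients by the pushforward formula
\[
a_\alpha(\zeta)\wedge d\zeta \;=\; \pm\frac{1}{(2\pi i)^\kappa}\,\pi_*(\tau^\alpha\mu),
\]
where $\pi(\zeta,\tau)=\zeta$, and then verifies the expansion directly by Taylor-expanding a test form in $\tau$ and using the annihilation by $\bar\tau$, $d\bar\tau$ together with \eqref{hatmuintegral}. This handles existence and uniqueness in one stroke, and the explicit formula \eqref{eq:aalphaformula} is reused later in the paper (in the proof of Lemma~\ref{picard} on change of coordinates), so the paper's approach buys something concrete.

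Your route via a structure theorem for currents supported on $Z$---peeling off one $\tau_j$ at a time and using that a distribution on $\C$ supported at $0$ and annihilated by $\bar\tau$ is a finite sum of $\partial_\tau^m\delta_0$---is valid and perhaps more conceptual. One caution: the references \cite[(3.4)]{ACH} and \cite[Theorem~4.1]{ACH} concern Coleff--Herrera currents, which are additionally $\dbar$-closed, whereas elements of $\Cu_\U^Z$ need not be; so those results do not apply verbatim here and you genuinely need the induction argument (or the standard structure theorem for distributions supported on a linear subspace). Your uniqueness argument, testing against $\tau^\beta\xi(\zeta)$, is correct and in effect recovers the paper's pushforward formula. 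A minor presentational point: your reduction ``$\mu=\nu\wedge d\zeta$'' is not quite right as stated, since $\mu$ is of bidegree $(N,*)$ and so also carries the full $d\tau$; the $d\tau$ is of course absorbed into the $\dbar(d\tau/\tau^{\alpha+\1})$ factors in the end, but the intermediate bookkeeping should reflect that.
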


\begin{proof}
Since $\bar \tau_j \mu=0$ for all $j$, $\mu$  must have support on $Z$.
Since it is a current, there is a tuple $M$ of positive integers such that $\tau^\alpha \wedge \mu$
is non-zero only when $\alpha \leq M$.
Let $\pi$ be the
projection $(\zeta,\tau)\mapsto \zeta$. We claim that \eqref{palt} holds with
\begin{equation}\label{eq:aalphaformula}
    a_\alpha(\zeta) \wedge d\zeta = \pm \frac{1}{(2\pi i)^p} \pi_*(\tau^\alpha\mu).
\end{equation}
In fact, given a test form
$\phi$ with Taylor expansion
$$
\phi(\zeta,\tau)=\sum_{\alpha\le M} \frac{1}{\alpha !}\frac{\partial^\alpha \phi}
{\partial \tau^\alpha}(\zeta,0) \tau^\alpha +\Ok(\bar \tau,d\bar \tau) +\Ok(\tau_1^{M_1+1},\dots,\tau_\kappa^{M_\kappa+1}),
$$
and using \eqref{hatmuintegral}, we see that
$$
\mu.\phi=\sum_{\alpha\le M}\mu. \frac{1}{\alpha !}\frac{\partial^\alpha\phi}{\partial \tau^\alpha}(\zeta,0) \tau^\alpha=
\sum_{\alpha\le M} \frac{1}{(2\pi i)^p}\pi_*(\tau^\alpha\mu)\wedge \dbar\frac{d\tau}{\tau^{\alpha+\1}}.\phi.
$$
\end{proof}

It follows from \eqref{eq:aalphaformula} that if $\mu$ has the expansion \eqref{palt}, then
$\dbar \mu$ has an expansion
\begin{equation*}
\dbar\mu=\sum_{\alpha\ge 0} \dbar a_\alpha(\zeta) \wedge \dbar\frac{d\tau}{\tau^{\alpha+\1}}\w d\zeta.
\end{equation*}

In particular,  $\dbar\mu = 0$ if and only if each $a_\alpha(\zeta)$ is $\dbar$-closed.
It is also readily verified that
a sequence $\mu_k$ tends to $0$ if and only if the associated sums
\eqref{palt} have uniformly bounded length and their coefficients $a_{k,\alpha}$ tend to
$0$ for each fixed $\alpha$.

\subsection{The intrinsic sheaf $\Cu_X^{n,*}$}
We define the sheaf $\Cu_X^{n,*}$ of intrinsic $(n,*)$-currents on $X$ as the dual of
$\E_X^{0,n-*}$.
Assume that $i\colon X\to \U$ is a local embedding.  Since
 $\E_X^{0,n-*}=\E_\U^{0,n-*}/\Kers i^*$ and $\Kers i^*$ is closed,
the elements in $\Cu_X^{n,*}$
are represented by the currents in $\U$ that
vanish when acting on test forms with a factor in  $\J, \bar \J_Z, d\bar\J_Z$, which in turn are the currents
in $\U$ that are annihilated by $\J,  \bar \J_Z, d\bar\J_Z$, that is,
$\Homs(\Ok_\U/\J, \Cu_\U^Z)$.
Therefore we have the isomorphism
$$
i_*\colon \Cu_X^{n,*} \stackrel{\simeq}{\to} \Homs(\Ok_\U/\J, \Cu_\U^Z).
$$
Let $\om_X$ be the subspace of $\dbar$-closed elements
in $\Cu_X^{n,0}$. We then obtain the isomorphism
$$
i_*\colon \om_X \stackrel{\simeq}{\to} \Homs(\Ok_\U/\J, \CH_\U^Z).
$$
 In case $X$ is reduced,  $\om_X$ is the
well-known Barlet sheaf of holomorphic $n$-forms on $X$,
cf.~\cite[Section~5]{AL} and \cite{Barlet}.

\subsection{Representations of $\Ok_X$ and $\E_X^{0,*}$}\label{snabel}
Assume that we have a local embedding $i\colon X\to \Omega$.
Notice that we have a well-defined mapping
\begin{equation}\label{pia1}
\Ok_X\to \Homs\big(\Homs(\Ok_\U/\J, \CH_\U^Z),  \Homs(\Ok_\U/\J, \CH_\U^Z)\big),
 \quad \phi\mapsto (\mu \mapsto \phi \mu).
\end{equation}
It follows from \eqref{duality1} that \eqref{pia1} is injective.

\begin{remark}
It is in fact an isomorphism where
$X$ is Cohen-Macaulay (or more generally where $X$ is $S_2$), see \cite[Theorem~7.3]{AL}.
\end{remark}

Let $\mu_1,\ldots, \mu_\rho$  be generators for $\Homs(\Ok_\U/\J, \CH_\U^Z)$,
and consider an element $\Phi$ in
\newline 
$\Homs\big(\Homs(\Ok_\U/\J, \CH_\Omega^Z),  \Homs(\Ok_\U/\J, \Cu_\U^Z)\big)$.
Moreover, let us choose coordinates $(\zeta,\tau)$ in $\U$ as before.
Since each $\Phi(\mu_j)$ is  in $\Homs(\Ok_\U/\J, \Cu_\U^Z)$
it has a unique representation \eqref{palt}, and if we choose $M$ such that
$\tau_1^{M_1+1},\dots,\tau_\kappa^{M_\kappa+1} \in \J$, then the sum only runs over $\alpha \leq M$.
Thus, $\Phi$ is represented by the tuple $\tilde{\Phi} \in (\Cu_Z^{0,*})^r$ consisting
of all the $a_\alpha(\zeta)$ for the currents $\Phi(\mu_j)$, $j=1,\ldots, \rho$.

\smallskip
Now assume that $X$ is Cohen-Macaulay and choose a monomial basis $\tau^{\alpha_\ell}$
as in Section~\ref{putte0}. 
Each $\phi \in \Ok_X$ is then, cf.~\eqref{skata1},  represented by a tuple $\hat{\phi} \in (\Ok_Z)^\nu$.
Thus the mapping \eqref{pia1} defines a holomorphic sheaf morphism
(matrix)  $T\colon (\Ok_Z)^\nu \to (\Ok_Z)^r$.
It is injective by \eqref{duality1}, so $T$ is generically pointwise injective.
In fact, we have, \cite[Lemma~4.11]{AL}:

 \begin{lma} \label{lma:pointwise}
    The morphism $T$ is pointwise injective.
\end{lma}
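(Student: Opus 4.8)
The plan is to reduce the pointwise injectivity of $T$ to a statement purely about the Cohen–Macaulay ideal $\J$ and its residue current, and then exploit the duality principle \eqref{duality1} fiberwise. Recall that $T\colon(\Ok_Z)^\nu\to(\Ok_Z)^r$ encodes the map $\phi\mapsto(\phi\mu_1,\dots,\phi\mu_\rho)$ after expressing $\phi\in\Ok_X$ via the monomial basis \eqref{skata1} and each $\phi\mu_j\in\Homs(\Ok_\U/\J,\Cu_\U^Z)$ via the expansion \eqref{palt}. Pick a point $z_0\in Z$; after a translation we may assume $z_0=0$. We must show: if $\hat\phi=(\hat\phi_0,\dots,\hat\phi_{\nu-1})\in(\Ok_{Z,0})^\nu$ is such that $T\hat\phi$ vanishes at $0$, then $\hat\phi$ vanishes at $0$, i.e.\ all $\hat\phi_\ell(0)=0$. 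Equivalently, writing $\Phi=\sum_\ell\hat\phi_\ell(\zeta)\otimes\tau^{\alpha_\ell}$ for the corresponding element of $\Ok_{X,0}$, I want to show that $\Phi(0)=0$ in $\Ok_{X,0}\otimes_{\Ok_{Z,0}}\C$, i.e.\ $\Phi\in\mathfrak m_0\Ok_{X,0}$ where $\mathfrak m_0\subset\Ok_{Z,0}$ is the maximal ideal pulled back via $\pi$.

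The key point is to understand what ``$T\hat\phi$ vanishes at $0$'' means for the currents $\Phi\mu_j$. By the computation after Lemma~\ref{allan}, the coefficients $a_\alpha$ in the expansion \eqref{palt} of $\Phi\mu_j$ are, up to constants, $\pi_*(\tau^\alpha\Phi\mu_j)/d\zeta$. The entries of $T\hat\phi$ at $0$ vanishing means each such $a_\alpha$ vanishes at $\zeta=0$. Now restrict attention to the ideal $\J_0+\mathfrak m_0$, whose zero set is the single point $0$ and which is again Cohen–Macaulay of codimension $N$; its residue current is a Coleff–Herrera current of maximal bidegree $(N,N)$ with point support, and by the structure theory it is obtained from the $\mu_j$ by multiplying by $\hat\mu^0=\dbar(d\zeta/\zeta^{\bf 1})\w(\cdot)$, i.e.\ it is the ``fiber'' of the family $\mu_j$ over $0$. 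Concretely, the statement that all $a_\alpha$ of $\Phi\mu_j$ vanish at $0$ for all $j$ is equivalent to $\Phi\cdot(\mu_j\w\dbar\tfrac{d\zeta}{\zeta^{\bf 1}})=0$ for all $j$, because taking one more $\dbar(1/\zeta)$-factor and pairing against a test function in $\zeta$ precisely reads off the Taylor coefficients of the $a_\alpha$ at $0$. Since the currents $\mu_j\w\dbar\tfrac{d\zeta}{\zeta^{\bf 1}}$ generate $\Homs(\Ok_\U/(\J+\mathfrak m_0\Ok_\U),\CH_\U^{\{0\}})$ — this follows from \eqref{tomat3} applied over the fat point, or directly from \cite[Theorem~1.5]{Aext} — the duality principle \eqref{duality1} for that ideal gives $\Phi\in\J_0+\mathfrak m_0\Ok_{\U,0}$, hence $\Phi\in\mathfrak m_0\Ok_{X,0}$, which is exactly $\hat\phi(0)=0$.

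The step I expect to be the main obstacle is the bookkeeping in the middle: verifying carefully that ``all coefficients $a_\alpha$ of $\Phi\mu_j$ vanish at $0$'' is genuinely equivalent to ``$\Phi\w(\mu_j\w\dbar\tfrac{d\zeta}{\zeta^{\bf 1}})=0$'', i.e.\ that tensoring the $\tau$-residue with a $\zeta$-residue and extracting Taylor data at the point really matches the entries of $T$ at $0$ with the right multiplicities, and that the resulting ideal $\J+\mathfrak m_0\Ok_\U$ is still of pure dimension (a fat point, so codimension $N$) so that \eqref{duality1} applies without its SEP caveat. The Cohen–Macaulay hypothesis enters twice: once to get the finite free representation \eqref{skata1} so that $T$ is an honest matrix, and once to guarantee that the only residue component is $R_\kappa=\sum\mu_j$ so that the fiber ideal at $0$ is again Cohen–Macaulay and its residue is a single Coleff–Herrera current. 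Modulo that, pointwise injectivity of $T$ is just the pointwise incarnation of the (generic) injectivity already recorded from \eqref{duality1}. Alternatively, one can invoke \cite[Lemma~4.11]{AL} directly, but the argument above makes the mechanism transparent and self-contained.
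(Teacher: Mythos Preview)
The paper does not prove this lemma at all; it merely cites \cite[Lemma~4.11]{AL}. So there is no in-paper argument to compare against, and your attempt to supply a self-contained proof is reasonable in spirit.

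Your strategy is the right one: reduce pointwise injectivity at $z_0=0$ to the duality principle \eqref{duality1} for the Artinian ideal $\J+\mathfrak m_0\Ok_\U$, via the ``fibre'' Coleff--Herrera currents $\nu_j=\dbar(1/\zeta^{\mathbf 1})\wedge\mu_j$. Two points need attention.

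First, a notational slip: $\mu_j$ already has full holomorphic degree $N$, so $\mu_j\wedge\dbar\tfrac{d\zeta}{\zeta^{\mathbf 1}}$ is zero for bidegree reasons. What you want is $\dbar(1/\zeta^{\mathbf 1})\wedge\mu_j$, which is a genuine $(N,N)$-current supported at the origin; with this correction your equivalence in step~2 is fine.

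Second, and more seriously, the claim that the $\nu_j$ generate $\Homs(\Ok_\U/(\J+\mathfrak m_0),\CH_\U^{\{0\}})$ is true but does \emph{not} follow from \eqref{tomat3} or \cite[Theorem~1.5]{Aext} as you assert---those statements say nothing about passing to the fibre. What is actually needed is that the natural map $\omega_X/\mathfrak m_0\omega_X\to\omega_A$ (with $A=\Ok_X/\mathfrak m_0\Ok_X$), given by $\mu\mapsto\dbar(1/\zeta^{\mathbf 1})\wedge\mu$, is an isomorphism. Both sides have $\C$-dimension $\nu$ (the left because $\omega_X$ is free of rank $\nu$ over $\Ok_Z$, the right because $\omega_A=\Hom_\C(A,\C)$ and $\dim_\C A=\nu$), and the map factors through the injection $\omega_X/\mathfrak m_0\omega_X\hookrightarrow\omega_{\hat X}/\mathfrak m_0\omega_{\hat X}\cong\omega_{\hat A}$, landing in $\omega_A$. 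The injectivity of $\omega_X/\mathfrak m_0\omega_X\to\omega_{\hat X}/\mathfrak m_0\omega_{\hat X}$ is equivalent to $\omega_{\hat X}/\omega_X$ being free over $\Ok_Z$; this holds because $\omega_{\hat X}/\omega_X\cong\Ext^\kappa_{\Ok_\U}(\J/\I,\Ok_\U)$, and $\J/\I$ is free over $\Ok_Z$ (kernel of a surjection of free $\Ok_Z$-modules), hence Cohen--Macaulay of dimension $n$, so its $\Ext^\kappa$ is again CM of dimension $n$ and thus free over $\Ok_Z$. Alternatively, one can tensor a minimal resolution of $\Ok_\U/\J$ with the Koszul complex of $\zeta$ and invoke the product formula for residue currents to see directly that the components of the residue current of $\Ok_\U/(\J+\mathfrak m_0)$ are precisely the $\nu_j$. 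Either way, this is the substance that your sketch is missing; once it is supplied, the argument is complete.
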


We can thus (locally) choose a holomorphic matrix $A$ such that
\begin{equation}\label{parra}
0\to \Ok_Z^\nu\stackrel{T}{\to} \Ok_Z^r\stackrel{A}{\to} \Ok_Z^{r'}
\end{equation}
is pointwise exact, and holomorphic matrices $S$ and $B$
such that
\begin{equation} \label{Thomotopy}
    I = TS + BA.
\end{equation}

 \smallskip
In the same way we have a natural mapping
\begin{equation}\label{pia2}
\E_X^{0,*}\to \Homs\big(\Homs(\Ok_\U/\J, \CH_\U^Z),  \Homs(\Ok_\U/\J, \Cu_\U^Z)\big),   \quad \phi\mapsto (\mu \mapsto \phi \wedge \mu).
\end{equation}
If $\phi$ is in $\E^{0,*}_X$, then the coefficients in the expansion \eqref{palt}
of $\phi\w\mu$ are in $\E_Z^{0,*}$ so the image of $\phi$ in \eqref{pia2} is represented by an
element in $(\E_Z^{0,*})^r$.
If $X$ is Cohen-Macaulay we have the unique representation
\eqref{skata1} with $\hat\phi_\ell$ in $\E_Z^{0,*}$  and hence \eqref{pia2} defines an  $\E_Z^{0,*}$-linear morphism
$(\E_Z^{0,*})^\nu\to (\E_Z^{0,*})^r$  that coincides with $T$  for holomorphic $\phi$.
Since \eqref{parra} is pointwise exact, we have the exact complex
\begin{equation}\label{parra2}
0\to (\E_Z^{0,*})^\nu\stackrel{T}{\to} (\E_Z^{0,*})^r\stackrel{A}{\to} (\E_Z^{0,*})^{r'}.
\end{equation}

 \smallskip
We now consider what happens with these representations when we change coordinates.

\begin{lma}\label{picard}
Let $(\zeta,\tau)$ and $(\zeta',\tau')$ be two coordinate systems in $\U$ as before.  There is
a matrix $L$ of holomorphic differential operators such that if $\mu \in \Homs(\Ok_\U/\J,\Cu_\U^Z)$,
and  $(a_\alpha)$ and $(a_\alpha')$ are the coefficients in the associated expansions \eqref{palt}, then
$(a'_\alpha)=L (a_\alpha)$.
\end{lma}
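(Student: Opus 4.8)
The plan is to reduce the statement to a purely local computation on the submanifold $Z$, using the two representations \eqref{palt} coming from the two coordinate systems. First I would fix a tuple $M$ large enough that $\tau_1^{M_1+1},\dots,\tau_\kappa^{M_\kappa+1}\in\J$ (and similarly $(\tau')^{M+\1}\in\J$, possibly enlarging $M$), so that for every $\mu\in\Homs(\Ok_\U/\J,\Cu_\U^Z)$ both expansions \eqref{palt} are finite with multiindices running over $\alpha\le M$. By the computation in the proof of Lemma~\ref{allan}, especially \eqref{eq:aalphaformula}, the coefficients are recovered from $\mu$ by $a_\alpha(\zeta)\w d\zeta=\pm\frac{1}{(2\pi i)^\kappa}\pi_*(\tau^\alpha\mu)$, where $\pi$ is the projection $(\zeta,\tau)\mapsto\zeta$, and similarly $a'_\alpha(\zeta')\w d\zeta'=\pm\frac{1}{(2\pi i)^\kappa}\pi'_*((\tau')^\alpha\mu)$ with $\pi'\colon(\zeta',\tau')\mapsto\zeta'$. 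So the content of the lemma is to express $\pi'_*((\tau')^\alpha\mu)$ in terms of the $\pi_*(\tau^\beta\mu)$ by a fixed matrix of differential operators independent of $\mu$.

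The key step is to test against an arbitrary test form $\phi'(\zeta')\,d\zeta'\w d\bar\zeta'$ on $Z$ in the $(\zeta',\tau')$-chart and to re-expand everything in the $(\zeta,\tau)$-coordinates. Writing the change of coordinates as $\zeta'=\zeta'(\zeta,\tau)$, $\tau'=\tau'(\zeta,\tau)$ with $\tau'(\zeta,0)=0$ (both charts have $Z=\{\tau=0\}=\{\tau'=0\}$), I substitute into $(\tau')^\alpha$ and into $\pi'^*(\phi'\,d\zeta'\w d\bar\zeta')$ and Taylor expand in $\tau$ up to order $M$; the error terms lie in $\J$ or carry a factor from $\bar\J_Z$ or $d\bar\J_Z$ and are killed by $\mu$. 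What remains is a finite sum $\sum_{\beta\le M}\tau^\beta\, g_\beta(\zeta)\,d\zeta\w d\bar\zeta$ where each $g_\beta$ is obtained from $\phi'$ by applying a holomorphic differential operator in $\zeta$ (built from the finitely many Taylor coefficients in $\tau$ of the coordinate change, the Jacobian factors, and the expansion of $(\tau')^\alpha$). Using \eqref{hatmuintegral}/\eqref{eq:aalphaformula} this shows $\pi'_*((\tau')^\alpha\mu).\phi' = \sum_{\beta\le M} (a_\beta\,d\zeta\w d\bar\zeta).(L^*_{\alpha\beta}\phi')$ for fixed holomorphic differential operators $L^*_{\alpha\beta}$, and dualizing (integrating by parts, i.e.\ passing from $L^*_{\alpha\beta}$ to its formal adjoint) gives $a'_\alpha=\sum_\beta L_{\alpha\beta}a_\beta$ with $L_{\alpha\beta}$ a holomorphic differential operator. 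Assembling the $L_{\alpha\beta}$ into the matrix $L$ finishes the proof. (Here one should also note $a_\alpha$, $a'_\alpha$ are really coefficients of $(0,*)$-forms; the differential operators act componentwise, which causes no difficulty.)

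The main obstacle I anticipate is bookkeeping rather than anything deep: one must carefully track the Jacobian $d\zeta'\w d\bar\zeta' = |{\det}|^2\,d\zeta\w d\bar\zeta$ plus the $\tau$-dependent corrections, expand $(\tau')^\alpha=(\tau'(\zeta,\tau))^\alpha$ as a polynomial in $\tau$ with holomorphic coefficients, and verify that after discarding the ideal-theoretic error terms the resulting operator on $\phi'$ is genuinely a finite-order holomorphic differential operator with coefficients independent of $\mu$. The finiteness (order $\le|M|$ in $\tau$, hence bounded order in $\zeta$ after the substitution) is exactly what the a priori bound from the support-on-$Z$ argument of Lemma~\ref{allan} provides, so the construction is forced to terminate. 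A secondary, minor point is to make sure the antiholomorphic and $d\bar\tau$ terms produced by substituting $\bar\tau'=\overline{\tau'(\zeta,\tau)}$ into the test form indeed pair to zero against $\mu$, which is immediate since $\mu$ is annihilated by $\bar\J_Z$ and $d\bar\J_Z$ and $\tau'(\zeta,0)=0$.
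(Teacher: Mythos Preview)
Your proposal is correct and follows essentially the same route as the paper's own proof: both recover $a'_\alpha$ via the pushforward formula \eqref{eq:aalphaformula}, test against a form pulled back by $\pi'$, Taylor expand the coordinate change in $\tau$, discard terms killed by $\bar\J_Z$, $d\bar\J_Z$, and high powers of $\tau$, and then integrate by parts to move the $\zeta$-derivatives from the test form onto the coefficients $a_\beta$. The only cosmetic difference is that the paper first normalizes so that $\zeta'=\zeta$ on $\{\tau=0\}$ (absorbing the Jacobian $d\zeta/d\zeta'$ at the outset), whereas you keep track of the full Jacobian throughout; this is pure bookkeeping and does not affect the argument.
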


\begin{proof}
Let $\pi$ and $\pi'$ be the projections $(\zeta,\tau)\mapsto \zeta$ and $(\zeta',\tau')\mapsto \zeta'$, respectively.
Fix a multiindex $\alpha$.
Recall that $a_\alpha'\w d\zeta'=\pm (2\pi i)^{-1}\pi'_*((\tau')^\alpha \mu)$, cf.~\eqref{eq:aalphaformula}.
We can write
$$
(\tau')^\alpha=\sum_\rho b_\rho(\zeta)\tau^\rho,
$$
where $b_\rho$ are holomorphic.  After a preliminary change of coordinates in the $\zeta$-variables, which only affects the
coefficients by the factor $d\zeta/d\zeta'$, we may assume that $\zeta'=\zeta$ when $\tau'=0$ so that
$$
\zeta'_j=\zeta_j+\sum_k \tau_k b_{jk}(\zeta,\tau).
$$
If $\varphi=\sum_{|I|=*}' \varphi_I(\zeta) d\bar\zeta_I$ is a test form in $Z$ of bidegree  $(0,*)$, then
\begin{multline*}
(\pi')^* \varphi = \sum_{|I|=*}' \varphi_I(\zeta') d\bar\zeta_I'+\Ok(\bar\tau,d\bar\tau)=\\
\sum_{|I|=*}' \sum_{\gamma,\delta} c_{\gamma,\delta}(\zeta) \frac{\partial^{|\gamma|} \varphi_I}{\partial \zeta^\gamma}(\zeta) d\bar\zeta_I\tau^\delta + \Ok(\bar\tau, d\bar{\tau})=
\sum_{\gamma,\delta} c_{\gamma,\delta}(\zeta) \frac{\partial^{|\gamma|}\pi^* \varphi}{\partial \zeta^\gamma}
\tau^\delta + \Ok(\bar\tau, d\bar{\tau}).
\end{multline*}
Since $\bar\tau\mu=0$ and  $d\bar{\tau} \wedge \mu = 0$, and $b_\rho$ and $c_{\gamma,\delta}$ only depend on $\zeta$,
    \begin{equation*}
        \pi'_*((\tau')^\alpha \mu) . \varphi = 
        \mu .(\tau')^\alpha (\pi')^* \varphi=
\sum_{\rho,\gamma,\delta} \pm \frac{\partial^{|\gamma|}}{\partial\zeta^\gamma} \left( c_{\gamma,\delta} \wedge b_\rho \wedge \pi_*(\tau^{\rho+\delta}\mu)\right) . \varphi
    \end{equation*}
 which means that
\begin{equation}\label{eq:aprimalpha}
 a_\alpha'=\sum_{\rho,\gamma,\delta} \pm \frac{\partial^{|\gamma|}}{\partial\zeta^\gamma} \left( c_{\gamma,\delta} \wedge b_\rho \wedge a_{\rho+\delta} \right).
\end{equation}
Note that the expansion of $(\pi')^*\varphi$ is infinite, but it only runs over $\gamma$ such that $|\gamma|\leq|\delta|$.
Since $\tau^{\delta} \mu = 0$ if $|\delta|$ is large enough, the series \eqref{eq:aprimalpha} defining $a'_\alpha$ is thus in fact a finite sum.
Thus  $a_\alpha'$  is obtained from a matrix of holomorphic differential operators applied to $(a_{\beta})$.
\end{proof}

\begin{cor}\label{ponny}
Assume that $X$ is Cohen-Macaulay.  Let $(\zeta,\tau)$ and $(\zeta',\tau')$ be coordinate systems as above, and let
$\tau^{\alpha_\ell}$ and $(\tau')^{\alpha_\ell'}$ be bases as in Section~\ref{putte0}.
There is a matrix $\mathcal L$ of holomorphic differential operators such that if
$(\hat{\phi}_j)\in (\E^{0,*})^\nu$ and $(\hat{\phi}'_j)\in (\E^{0,*})^\nu$ are the coefficients  with respect to these
two bases of the same element $\phi\in \E^{0,*}_X$,
then
\begin{equation}\label{pommes}
(\hat{\phi}'_j)=\mathcal L (\hat{\phi}_j).
\end{equation}
\end{cor}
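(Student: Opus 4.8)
The plan is to reduce the statement about the Cohen--Macaulay representations \eqref{skata1} to the already established Lemma~\ref{picard} about the universal expansion \eqref{palt}, together with the homotopy identities \eqref{parra}--\eqref{Thomotopy}. Concretely, fix the generators $\mu_1,\dots,\mu_\rho$ of $\Homs(\Ok_\U/\J,\CH_\U^Z)$. Given $\phi\in\E^{0,*}_X$, the image of $\phi$ under \eqref{pia2} is the map $\mu\mapsto \phi\w\mu$, and in the coordinates $(\zeta,\tau)$ its associated tuple $\tilde\Phi\in(\E_Z^{0,*})^r$ is exactly $T\hat\phi$ when $\phi$ is holomorphic, and in general is the $\E_Z^{0,*}$-linear image of $\hat\phi$ under the morphism \eqref{parra2}, which I will again call $T$ (and $T'$ in the primed coordinates). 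The point is that the tuple $\tilde\Phi$ is nothing but the collection of all coefficients $a_\alpha$ in the expansions \eqref{palt} of the currents $\phi\w\mu_j$.

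First I would apply Lemma~\ref{picard} coordinate-wise: for each $j$, the expansion coefficients of $\phi\w\mu_j$ in the primed coordinates are obtained from those in the unprimed coordinates by the matrix $L$ of holomorphic differential operators. Stacking over $j=1,\dots,\rho$ this gives a single matrix $\widetilde L$ of holomorphic differential operators with
\begin{equation}\label{eq:Ltilde}
T'\hat\phi' = \widetilde L\,(T\hat\phi).
\end{equation}
Here I should note that $\widetilde L$ acts on $(\E_Z^{0,*})^r$; since differential operators commute with $\dbar$ up to the usual signs and the entries of $\widetilde L$ depend only on $\zeta$, \eqref{eq:Ltilde} is valid on smooth forms, not just holomorphic functions. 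Next I would invert: by \eqref{Thomotopy} applied in the primed coordinates there is a holomorphic matrix $S'$ with $S'T' = I$ on $(\E_Z^{0,*})^\nu$ (this uses that \eqref{parra2} is pointwise exact, so $S'$ can be taken to be the smooth extension of the holomorphic $S'$ from \eqref{Thomotopy}). Composing \eqref{eq:Ltilde} on the left with $S'$ yields
\begin{equation}\label{eq:finalL}
\hat\phi' = S'\widetilde L\, T\,\hat\phi =: \mathcal L\,\hat\phi,
\end{equation}
and $\mathcal L = S'\widetilde L\,T$ is a composition of a holomorphic matrix, a matrix of holomorphic differential operators, and a holomorphic matrix, hence itself a matrix of holomorphic differential operators. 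This is the desired \eqref{pommes}.

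The main obstacle is bookkeeping rather than conceptual: one must be careful that the map \eqref{pia2} is \emph{injective} on $\E^{0,*}_X$ (not just on $\Ok_X$) so that recovering $\hat\phi'$ from $T'\hat\phi'$ via $S'$ is legitimate — this is where Lemma~\ref{lma:pointwise} and the pointwise exactness of \eqref{parra2} enter, guaranteeing $S'T' = I$ as an identity of $\E_Z^{0,*}$-linear maps. A secondary point is that Lemma~\ref{picard} is stated for a single $\mu\in\Homs(\Ok_\U/\J,\Cu_\U^Z)$, and I am applying it to $\phi\w\mu_j$, which lies in $\Homs(\Ok_\U/\J,\Cu_\U^Z)$ because $\phi\in\E^{0,*}_X = \E_\U^{0,*}/\Kers i^*$ multiplies $\CH$-type currents into $\Cu_\U^Z$-type currents; this is exactly the content of the map \eqref{pia2} being well-defined, so no new input is needed. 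I would also remark that the preliminary change of coordinates in the $\zeta$-variables appearing in the proof of Lemma~\ref{picard} only contributes the scalar factor $d\zeta/d\zeta'$, which is absorbed harmlessly into $\widetilde L$.
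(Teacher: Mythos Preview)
Your proof is correct and follows essentially the same approach as the paper: both apply Lemma~\ref{picard} to the currents $\phi\wedge\mu_j$ to obtain $T'\hat\phi' = \tilde L\,T\hat\phi$, and then compose with $S'$ (from \eqref{Thomotopy}) on the left to recover $\hat\phi' = S'\tilde L\,T\hat\phi =: \mathcal L\hat\phi$. Your additional remarks on why $S'T'=I$ holds at the level of $(\E_Z^{0,*})^\nu$ (via pointwise exactness of \eqref{parra2}) and why $\phi\wedge\mu_j$ lies in $\Homs(\Ok_\U/\J,\Cu_\U^Z)$ are helpful clarifications that the paper leaves implicit.
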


\begin{proof}
Consider $\phi\in \E^{0,*}_X$ and let $\Phi$ be its image in
$\Homs\big(\Homs(\Ok_\U/\J, \CH_\U^Z),\Homs(\Ok_\U/\J,\Cu_\U^Z)\big)$.
Given generators $\mu_1,\ldots,\mu_\rho$ for $\Homs(\Ok_\U/\J),\CH_\U^Z)$,
let $T'$ and $S'$ be the mappings $T$ and $S$ but with respect to the new variables
$(\zeta',\tau')$ and basis $(\tau')^{\alpha'_j}$.  Then $T(\hat\phi_j)$ and $T'(\hat\phi'_j)$ are
the coefficients with respect to $(\zeta,\tau)$ and $(\zeta',\tau')$, respectively, of
$\Phi (\mu_j)$, $j=1,\ldots,\rho$.

According to  Lemma~\ref{picard},
then  $T'(\hat\phi'_j) =\tilde L (T(\hat\phi_j))$,  where $\tilde L$ is  a matrix of holomorphic
differential operators. Thus, cf.~\eqref{Thomotopy},
 $(\hat\phi'_j)= S'\circ T'(\hat\phi'_j) =S'\circ \tilde L (T (\hat\phi_j))$, which defines the desired matrix $\La$.
\end{proof}

\subsection{The sheaf $\Cu_X^{0,*}$ of $(0,*)$-currents}
Let us assume now that $X$ is Cohen-Macaulay.  We want
 $\Cu_X^{0,*}$ to be an $\Ok_X$-sheaf extension of $\E_X^{0,*}$
 so that $\E_X^{0,*}$ is dense in a suitable topology.
 The idea is to define a $(0,*)$-current $\phi$ as something that for each choice of
 coordinates $(\zeta,\tau)$ and basis $\tau^{\alpha_\ell}$ as in Section~\ref{putte0} has a representation \eqref{skata1}
where $(\hat\phi_j)$ are in $(\Cu_Z^{0,*})^\nu$,  and transform by \eqref{pommes}.
However, to get a more invariant definition we will represent
$\Cu_X^{0,*}$ as a subsheaf of the $\Ok_X$-sheaf
$$
\F:=\Homs\big(\Homs(\Ok_\U/\J, \CH_\U^Z),\Homs(\Ok_\U/\J, \Cu_\U^Z)\big).
$$
%

Let us fix $(\zeta,\tau,\tau^{\alpha_\ell})$.  Given an expression \eqref{skata1},
where $\hat\phi_0,\ldots\hat\phi_{\nu-1}$ are in $\Cu_Z^{0,*}$, we get a mapping
\begin{equation}\label{sune}
\CH_\U^Z\to \Cu_\U^Z, \quad  \mu\mapsto \hat\phi\w \mu,
\end{equation}
by expressing
$\mu$ as in \eqref{palt} and performing the multiplication
formally term by term.

\begin{lma} \label{lma:Fprim}
The mapping \eqref{sune} defines an element in $\F$ that is zero if and only if
all $\hat\phi_\ell$ vanish.

All such images
    in $\F$ form a coherent subsheaf $\F'$ of $\F$ that is independent of the local
    choice $(\zeta,\tau,\tau^{\alpha_\ell})$.
\end{lma}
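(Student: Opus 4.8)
The plan is to prove the three assertions of Lemma~\ref{lma:Fprim} in order: (i) that \eqref{sune} is well-defined as an element of $\F$, (ii) that it is zero if and only if all $\hat\phi_\ell$ vanish, and (iii) that the collection of all such images is a coherent subsheaf $\F'$ independent of the local choice $(\zeta,\tau,\tau^{\alpha_\ell})$.

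For (i) I would first check that the formal term-by-term multiplication in \eqref{sune} makes sense: writing $\mu\in\CH_\U^Z$ as $\sum_{\alpha} a_\alpha(\zeta)\wedge\dbar(d\tau/\tau^{\alpha+\1})\wedge d\zeta$ as in \eqref{palt}, and $\hat\phi=\sum_\ell \hat\phi_\ell(z)\otimes\tau^{\alpha_\ell}$ as in \eqref{skata1}, the product $\hat\phi\wedge\mu$ is obtained using the transformation law \eqref{eq:simpleTransformationLaw}, $\tau^{\alpha_\ell}\dbar(d\tau/\tau^{\alpha+\1}) = \dbar(d\tau/\tau^{\alpha+\1-\alpha_\ell})$ when $\alpha_\ell\le\alpha+\1$, and gives $0$ otherwise. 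The result is again a current annihilated by $\bar\J_Z$ and $d\bar\J_Z$, hence lies in $\Cu_\U^Z$; since $\mu$ is also annihilated by $\J$, so is $\hat\phi\wedge\mu$ by construction, so the image lies in $\Homs(\Ok_\U/\J,\Cu_\U^Z)$. That the assignment $\mu\mapsto\hat\phi\wedge\mu$ is $\Ok_\U/\J$-linear is clear since multiplication by $\tau^{\alpha_\ell}$ commutes with multiplication by holomorphic functions. Thus \eqref{sune} defines an element of $\F$.

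For (ii), the key point is that one can recover the coefficients $\hat\phi_\ell$ from the element of $\F$. Apply the map to the generator $\hat\mu$ of $\Homs(\Ok_\U/\I,\CH_\U^Z)$ for a sufficiently large $\I=\langle\tau_j^{M_j+1}\rangle$ (so that $\I\subset\J$ and $\hat\mu$ pairs with the full Taylor expansion as in \eqref{hatmuintegral}); actually it is cleanest to pair against all the generators $\mu_j$ of $\Homs(\Ok_\U/\J,\CH_\U^Z)$ and use the duality \eqref{duality1}. Using \eqref{eq:aalphaformula}, the coefficient $a_\alpha$ of $\hat\phi\wedge\mu_j$ in the expansion \eqref{palt} is, up to a constant, $\pi_*(\tau^\alpha\hat\phi\wedge\mu_j)$, and by the transformation law \eqref{eq:simpleTransformationLaw} together with \eqref{tomat3} (writing $\mu_j=\gamma_j\hat\mu$) this produces a nonzero $\Cu_Z^{0,*}$-linear combination of the $\hat\phi_\ell$ as soon as some $\hat\phi_\ell\neq 0$ — essentially because the pairing encodes the injective morphism $T$ of \eqref{parra}, which by Lemma~\ref{lma:pointwise} is pointwise injective, hence injective on $(\Cu_Z^{0,*})^\nu$. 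So the element of $\F$ vanishes iff all $\hat\phi_\ell=0$.

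For (iii), coherence: by the above, the assignment $(\hat\phi_\ell)\mapsto(\text{element of }\F)$ identifies $\F'$ with the image of the sheaf morphism $(\Cu_Z^{0,*})^\nu\to\F$ induced, in the representation of Section~\ref{snabel}, by the holomorphic matrix $T$ of \eqref{parra}; since $T$ is $\Ok_Z$-linear and $\F$ is coherent over $\Ok_X$, and $(\Cu_Z^{0,*})^\nu$ is (locally) a finitely generated module over the coherent sheaf $\Ok_Z$, the image is coherent — more concretely, \eqref{parra} exhibits $\F'$ locally as $\Ker(A\colon\Ok_Z^r\to\Ok_Z^{r'})\otimes_{\Ok_Z}(\text{appropriate currents})$, and using the homotopy \eqref{Thomotopy}, $I=TS+BA$, one sees $\F'$ is exactly the kernel of the operator $A$ acting on the tuple $(a_\alpha)$-representatives, which is coherent. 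Finally, independence of the choice $(\zeta,\tau,\tau^{\alpha_\ell})$ follows from Corollary~\ref{ponny}: changing coordinates and basis replaces $(\hat\phi_\ell)$ by $\mathcal L(\hat\phi_\ell)$ for an invertible (in the appropriate sense) matrix $\mathcal L$ of holomorphic differential operators — invertible because the two representations describe the same element of $\F$ — so the image subsheaf $\F'$ is unchanged. I expect step (ii), i.e.\ relating the vanishing of the abstract element of $\F$ to the vanishing of the coefficients via the duality principle and the pointwise injectivity of $T$, to be the main obstacle, since it requires carefully tracking how the formal multiplication interacts with the expansions \eqref{palt} and \eqref{skata1} and with the generators $\gamma_j\hat\mu$.
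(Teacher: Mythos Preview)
Your outline identifies the right ingredients (the matrix $T$, its pointwise injectivity from Lemma~\ref{lma:pointwise}, and Corollary~\ref{ponny}) and follows the paper's overall structure, but there are a few genuine gaps.

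\textbf{Missing regularization.} The paper's proof is driven throughout by one device you omit: approximate the current coefficients $\hat\phi_\ell\in\Cu_Z^{0,*}$ by smooth $\hat\phi_{\ell,\epsilon}\in\E_Z^{0,*}$, so that $\hat\phi_\epsilon\wedge\mu\to\hat\phi\wedge\mu$. This is how the paper proves (a) that $\J(\hat\phi\wedge\mu)=0$ (your ``by construction'' is too brief; the formal product is defined in a fixed coordinate system, and why multiplication by an arbitrary $g\in\J$ respects it needs justification), (b) that the coefficients of $\hat\phi\wedge\mu_j$ really are $T(\hat\phi_\ell)$ when the $\hat\phi_\ell$ are currents rather than smooth, and (c) that the change-of-basis formula from Corollary~\ref{ponny} extends to currents. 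Corollary~\ref{ponny} is stated only for $\phi\in\E_X^{0,*}$; you invoke it for currents and call $\mathcal L$ ``invertible'', but nothing in the corollary asserts invertibility, and without the regularization step the passage $\hat\phi'\wedge\mu=\hat\phi\wedge\mu$ for current-valued $(\hat\phi_\ell)$ is unproved.

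\textbf{Coherence and the $\Ok_X$-module structure.} Your coherence argument contains a false statement: $(\Cu_Z^{0,*})^\nu$ is certainly not a finitely generated $\Ok_Z$-module, so the displayed reasoning does not work. The paper instead argues that $\F'$ is an $\Ok_X$-submodule: for each $\xi\in\Ok_X$ there is a holomorphic matrix $A_\xi$ on $(\Cu_Z^{0,*})^\nu$ such that $(\widehat{\xi\phi})_\ell=A_\xi(\hat\phi_\ell)$ and $\xi(\hat\phi\wedge\mu)=\widehat{\xi\phi}\wedge\mu$, verified first for smooth $\hat\phi$ and then for currents by regularization. You do not address this closure under $\Ok_X$, which is the substantive content behind ``$\F'$ is a subsheaf''.

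In short, your plan is right, but the passage from smooth forms to currents is precisely where the work lies, and regularization is the tool the paper uses to bridge that gap in each step.
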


\begin{proof}
We first claim that   $\J (\hat\phi \wedge \mu)=0$ if $\J \mu=0$. Let
$(\hat\phi_{\ell,\epsilon})$ be  tuples in $(\E_Z^{0,*})^\nu$ obtained by regularizing each entry $\hat{\phi}_\ell$,
and let $\hat\phi_\epsilon$ denote the corresponding smooth forms in  $\E_X^{0,*}$.
Then  $\hat \phi _\epsilon\w \mu\to \hat \phi\w \mu$ as currents, and since $\J(\hat \phi_\epsilon \wedge \mu)=0$,
the claim follows.  Thus \eqref{sune} defines an element in $\F$.

Let  $\mu_1,\ldots,\mu_\rho$ be generators for $\Homs(\Ok_\U/\J, \CH_\U^Z)$. Then the coefficients of $\hat\phi\w \mu_j$,
$j=1,\ldots,\rho$,  are given by $T(\hat\phi_\ell)\in (\Cu_Z^{0,*})^r$, where
$T$ is the matrix in \eqref{parra2}.  Indeed this holds for the smooth $\hat\phi_\epsilon$, and hence for $\hat\phi$.
Since $T$ is pointwise injective, the induced mapping is injective as well. If the image of \eqref{sune} vanishes
therefore the tuple $\hat\phi_\ell$ vanishes.

For each multiindex $\gamma$,
$$
\tau^\gamma=\sum_\ell c_\ell(\zeta)\tau^{\alpha_\ell}
$$
for unique $c_\ell$ in $\Ok_Z$. For $\xi$ in $\Ok_X$
therefore there is a (unique)
matrix $A_\xi$ of $\Ok_Z$-functions such that
\begin{equation}\label{pannkaka}
(\hat\psi_\ell)=A_\xi(\hat\phi_\ell)
\end{equation}
for any smooth $\hat\phi$ if $\psi=\xi\phi$. Moreover,
\begin{equation}\label{pannkaka2}
\xi(\hat\phi\w\mu)=\hat\psi\w\mu
\end{equation}
since both sides are the equal to the current $\xi\phi\w \mu$.
  If now $(\hat\phi_\ell)$ is in $(\Cu_Z^{0,*})^\nu$ and
$(\hat\psi_\ell)$ is defined by \eqref{pannkaka}, then by a regularization as above
we see that still \eqref{pannkaka2} holds.
Thus the image of $(\Cu_Z^{0,*})^\nu$  is a  locally finitely generated $\Ok_X$-module,
and hence a coherent subsheaf  $\F'$ of $\F$.

It remains to check the independence of the choice of $(\zeta,\tau,\tau^{\alpha_\ell})$.
Thus assume $(\zeta',\tau', (\tau')^{\alpha'_\ell})$ is another choice, cf.~Corollary~\ref{ponny}.
If  $(\hat{\phi}'_\ell)=\mathcal L (\hat{\phi}_\ell)$ and $(\hat\phi'_{j,\epsilon})=\mathcal La (\hat\phi_{\ell,\epsilon})$,
then $\hat{\phi_\epsilon'}\to \hat{\phi}$.  Since $\hat\phi'_\epsilon\w\mu=
\hat{\phi_\epsilon}\w \mu$ we conclude that $\hat\phi'\w\mu=
\hat\phi\w \mu$.
\end{proof}

\begin{df}
   The sheaf of $(0,*)$-currents $\Cu_X^{0,*}$ is defined as the sheaf $\F'$. 
\end{df}

Given $(\zeta,\tau,\tau^{\alpha_\ell})$, thus each element $\phi$ in
$\Cu_X^{0,*}$ has a unique representation \eqref{skata1}.
However,
in view of Lemma~\ref{lma:Fprim}, the current   $\phi\w \mu$ has an invariant meaning.
We have natural mappings $\dbar\colon \Cu_X^{0,q}\to \Cu_X^{0,q+1}$, defined
by $(\hat\phi_\ell)\mapsto (\dbar\hat\phi_\ell)$.  They are well-defined since $\dbar$ commutes with the
transition matrices $\La$ in the preceding proof.  We thus get the complex
\begin{equation}\label{cucomplex}
0\to \Ok_X\to\Cu_X^{0,0}\stackrel{\dbar}{\to}\Cu_X^{0,1}\stackrel{\dbar}{\to}\cdots.
\end{equation}

\begin{prop} \label{lma:dbarClosed}
The sheaf complex   \eqref{cucomplex} is exact.
\end{prop}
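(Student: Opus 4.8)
The plan is to reduce everything to the smooth (manifold) case via the representation \eqref{skata1}. Fix a local embedding $i\colon X\to\U$, coordinates $(\zeta,\tau)$ and a monomial basis $\tau^{\alpha_\ell}$ as in Section~\ref{putte0}, so that any $\phi\in\Cu_X^{0,q}$ is given uniquely by a tuple $(\hat\phi_0,\dots,\hat\phi_{\nu-1})\in(\Cu_Z^{0,q})^\nu$ and $\dbar$ acts componentwise, $\dbar\phi\leftrightarrow(\dbar\hat\phi_\ell)$. First I would treat exactness at $\Cu_X^{0,0}$: if $\phi\in\Cu_X^{0,0}$ with $\dbar\phi=0$, then each $\hat\phi_\ell\in\Cu_Z^{0,0}$ is a $\dbar$-closed $(0,0)$-current on the manifold $Z$, hence holomorphic, so $(\hat\phi_\ell)\in(\Ok_Z)^\nu$ and $\phi\in\Ok_X$ by the representation \eqref{skata1}; conversely $\Ok_X\hookrightarrow\Cu_X^{0,0}$ and the composite $\Ok_X\to\Cu_X^{0,0}\xrightarrow{\dbar}\Cu_X^{0,1}$ is zero. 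Hence the sequence is exact at $\Ok_X$ and at $\Cu_X^{0,0}$.

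For exactness at $\Cu_X^{0,q}$, $q\ge 1$: suppose $\phi\in\Cu_X^{0,q}$ with $\dbar\phi=0$, represented by $(\hat\phi_\ell)\in(\Cu_Z^{0,q})^\nu$ with $\dbar\hat\phi_\ell=0$ for all $\ell$. Since $Z$ is a (smooth) manifold, the Dolbeault–Grothendieck lemma for currents applies: shrinking the neighborhood if necessary, there are currents $\hat\psi_\ell\in\Cu_Z^{0,q-1}$ with $\dbar\hat\psi_\ell=\hat\phi_\ell$. Let $\psi\in\Cu_X^{0,q-1}$ be the element represented by the tuple $(\hat\psi_\ell)$ with respect to the same choice $(\zeta,\tau,\tau^{\alpha_\ell})$ — this is legitimate precisely because, by Lemma~\ref{lma:Fprim}, \emph{every} tuple in $(\Cu_Z^{0,*})^\nu$ arises as the representation of an element of $\Cu_X^{0,*}=\F'$. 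Then $\dbar\psi$ is represented by $(\dbar\hat\psi_\ell)=(\hat\phi_\ell)$, so $\dbar\psi=\phi$ by the uniqueness of the representation \eqref{skata1}. This proves exactness at $\Cu_X^{0,q}$.

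The only genuine subtlety, and the step I would be most careful about, is the legitimacy of simply declaring the tuple $(\hat\psi_\ell)$ to define an element of $\Cu_X^{0,q-1}$: one must know that $\F'$ contains the image of \emph{all} of $(\Cu_Z^{0,*})^\nu$, not merely of the smooth or holomorphic tuples. This is exactly the content of Lemma~\ref{lma:Fprim} (the map $(\hat\phi_\ell)\mapsto(\mu\mapsto\hat\phi\w\mu)$ is well-defined on current tuples and has image the coherent subsheaf $\F'$), so no extra work is needed; one just has to invoke it. A complementary point worth a sentence is that $\dbar$ on $\Cu_X^{0,*}$ is well-defined independently of the chart, which is already noted after \eqref{cucomplex} (since $\dbar$ commutes with the transition matrices $\mathcal L$ of Corollary~\ref{ponny}), so the argument above, carried out in one convenient chart, indeed proves exactness of the intrinsic complex. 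Everything else is the classical local solvability of $\dbar$ for currents on a smooth manifold and the bookkeeping of the finite tuple $(\hat\phi_\ell)$.
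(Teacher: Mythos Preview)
Your proposal is correct and follows essentially the same approach as the paper's own proof: reduce to the componentwise representation \eqref{skata1}, use that $\dbar$ acts coefficientwise, and invoke the classical Dolbeault--Grothendieck lemma on the smooth manifold $Z$ for each $\hat\phi_\ell$. Your write-up is in fact more explicit than the paper's about the two points you flag (that any tuple in $(\Cu_Z^{0,*})^\nu$ defines an element of $\Cu_X^{0,*}$ via Lemma~\ref{lma:Fprim}, and that $\dbar$ is chart-independent), but these are already established before the proposition and the paper simply uses them without further comment.
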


\begin{proof}
First assume that $\phi$ is in $\Cu_X^{0,0}$.  Given  $(\zeta,\tau,\tau^{\alpha_\ell})$
we then have a unique representation \eqref{skata1} with $\hat{\phi}_j \in \Cu_Z^{0,*}$.
If $\dbar\phi=0$, then all $\dbar\hat{\phi}_\ell = 0$,
    so $\hat{\phi}_\ell \in \Ok_Z$, and thus $\phi \in \Ok_X$.
If $\phi$ is in $\Cu_X^{0,q+1}$, then $\dbar\phi=0$ means that each $\dbar\hat\phi_\ell=0$, and thus
we have local solutions to $\dbar \hat u_\ell=\hat \phi_\ell$ in $\Cu_Z^{0,q}$. It follows that
$u$ defined by $\hat u_\ell$ is a solution to $\dbar u=\phi$.
\end{proof}

\begin{df}
A sequence $\phi_k$ in $\Cu^{0,*}_X$ converges to $\phi$ if
$\phi_k\w \mu\to \phi\w\mu$ for all $\mu$ in
$\Homs(\Ok_\U/\J, \CH_\U^Z)$.
\end{df}

Notice that $\dbar(\phi\w\mu)=\dbar\phi\w\mu$. Thus
$\phi_k\to \phi$ implies that $\dbar\phi_k\to \dbar\phi$.

\begin{lma}
Let $(\zeta,\tau,\tau^{\alpha_\ell})$ be a local basis  in $\U$ and assume that
\begin{equation}\label{tomte}
\phi_k=\sum \hat\phi_{\ell k} \tau^{\alpha_\ell}, \quad \phi=\sum \hat\phi_{\ell} \tau^{\alpha_\ell}.
\end{equation}
The sequence $\phi_k$ in $\Cu^{0,*}_X$ converges to $\phi$ in $\U$ if and only if
 $\hat\phi_{\ell k}\to \hat\phi_\ell$ for each $\ell$.
\end{lma}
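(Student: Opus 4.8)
The strategy is to pass to the coefficient-level description of currents in $\Cu_\U^Z$ furnished by Lemma~\ref{allan} and the remark following it, and then to transport convergence across the holomorphic matrix $T$ of \eqref{parra2}, using crucially that $T$ is pointwise injective.

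First I would fix generators $\mu_1,\dots,\mu_\rho$ of $\Homs(\Ok_\U/\J,\CH_\U^Z)$ and a tuple $M$ with $\tau_1^{M_1+1},\dots,\tau_\kappa^{M_\kappa+1}\in\J$. The assignment $\mu\mapsto\hat\phi\w\mu$ of \eqref{sune} is $\Ok_\U$-linear --- this holds for smooth $\hat\phi$ and passes to the limit by the regularization in the proof of Lemma~\ref{lma:Fprim} --- and multiplication by a fixed holomorphic function is continuous on currents; hence, writing an arbitrary $\mu$ as an $\Ok_\U$-combination of the $\mu_j$, the convergence $\phi_k\to\phi$ in $\Cu_X^{0,*}$ is equivalent to $\phi_k\w\mu_j\to\phi\w\mu_j$ for $j=1,\dots,\rho$. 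By the first claim in the proof of Lemma~\ref{lma:Fprim}, each $\phi_k\w\mu_j$ lies in $\Homs(\Ok_\U/\J,\Cu_\U^Z)$, so it is annihilated by $\tau_i^{M_i+1}$ for all $i$, and therefore its expansion \eqref{palt} runs only over $\alpha\le M$. Thus the sequence $\{\phi_k\w\mu_j\}_k$ has uniformly bounded length, so by the remark after Lemma~\ref{allan} its convergence is equivalent to the convergence in $\Cu_Z^{0,*}$ of each coefficient in \eqref{palt}. Assembling over $j$ and invoking the identification, from the proof of Lemma~\ref{lma:Fprim}, of the full coefficient tuple of $(\phi_k\w\mu_1,\dots,\phi_k\w\mu_\rho)$ with $T(\hat\phi_{\ell k})\in(\Cu_Z^{0,*})^r$, I obtain that $\phi_k\to\phi$ in $\Cu_X^{0,*}$ if and only if $T(\hat\phi_{\ell k})\to T(\hat\phi_\ell)$ in $(\Cu_Z^{0,*})^r$.

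Both implications then follow at once. If $\hat\phi_{\ell k}\to\hat\phi_\ell$ for every $\ell$, applying the fixed holomorphic matrix $T$ gives $T(\hat\phi_{\ell k})\to T(\hat\phi_\ell)$, hence $\phi_k\to\phi$. For the converse, I would note that $T$ admits a holomorphic left inverse: since \eqref{parra} is a complex we have $AT=0$, so \eqref{Thomotopy} yields $T=(TS+BA)T=TST$, and pointwise injectivity of $T$ (Lemma~\ref{lma:pointwise}) then forces $ST=I_\nu$ as matrices of holomorphic functions on $Z$. This matrix identity applies verbatim to current-valued tuples, so from $T(\hat\phi_{\ell k})\to T(\hat\phi_\ell)$ we get $\hat\phi_{\ell k}=ST(\hat\phi_{\ell k})\to ST(\hat\phi_\ell)=\hat\phi_\ell$, which completes the proof.

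I expect the one genuinely substantive ingredient to be the existence of a \emph{holomorphic} (not merely meromorphic) left inverse $S$, i.e.\ the pointwise injectivity of $T$ from Lemma~\ref{lma:pointwise}, which is where the Cohen-Macaulay hypothesis is felt: with only generic injectivity the coefficients $\hat\phi_{\ell k}$ could degenerate along the non-injectivity locus while $T(\hat\phi_{\ell k})$ stays convergent. Everything else is bookkeeping with the coefficient description of $\Cu_\U^Z$ and the $\Ok_\U$-linearity of $\mu\mapsto\hat\phi\w\mu$.
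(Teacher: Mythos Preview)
Your argument is correct and follows essentially the same route as the paper: reduce to the coefficient tuple $T(\hat\phi_{\ell k})\in(\Cu_Z^{0,*})^r$ via the chosen generators $\mu_1,\dots,\mu_\rho$, and use pointwise injectivity of $T$ to recover convergence of the $\hat\phi_{\ell k}$. The paper's proof is terser---for the forward direction it observes directly that $\phi_k\w\mu\to\phi\w\mu$ for \emph{any} $\mu$ (since the coefficients of $\phi_k\w\mu$ in the expansion \eqref{palt} are fixed finite $\Ok_Z$-linear combinations of the $\hat\phi_{\ell k}$), without first reducing to generators; and for the converse it simply invokes pointwise injectivity of $T$ without spelling out the left inverse. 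Your explicit derivation $T=TST\Rightarrow ST=I_\nu$ from \eqref{Thomotopy} and $AT=0$ is a nice way to make that last step concrete, and is a legitimate reading of what ``pointwise injective'' buys you here.
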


It follows that $\E^{0,*}_X$ is dense in $\Cu^{0,*}_X$, since $\E^{0,*}_Z$ is dense in $\Cu^{0,*}_Z$.

\begin{proof}
If $\hat\phi_{\ell k}\to \hat\phi_\ell$ for each $\ell$, then
$\phi_k\w\mu\to \phi\w\mu$.  For the converse,
let us choose a generating set $\mu_1,\ldots, \mu_\rho$ as above.
If $\phi_k\to \phi$, then in particular $\phi_k\w \mu_j\to \phi\w \mu_j$ for each $j=1,\ldots,\rho$.
This means that $T(\hat\phi_{\ell k})\to T(\hat\phi_\ell)$ for each $\ell$.
Since the matrix $T$ is pointwise injective, therefore
$\hat\phi_{\ell k}\to \hat\phi_\ell$ for each $\ell$.
\end{proof}

\begin{remark}
From the very definition, cf.~Section~\ref{putte1}, a sequence $\phi_k\in\E_X^{0,*}$
tends to $0$ at a given point $x$ if and only if
given a small local embedding $i\colon X\to \U$ at $x$
there are representatives $\Phi_k\in \E_\U^{0,*}$
such that $\Phi_k\to 0$ in $\U$.  If $\tau^{\alpha_\ell}$ is a local basis and
$\hat\phi_{\ell k}\to 0$ for each $\ell$, then
$$
\Phi_k(\zeta,\tau):=\sum_\ell \hat\phi_{\ell k}(\zeta)\tau^{\alpha_\ell} \to 0
$$
in $\U$ and hence $\phi_k\to 0$ in $\E^{0,*}(X\cap \U)$. Also the converse is true.
In fact, if $\Phi_k$ are representatives in $\U$ and $\Phi_k\to 0$ in $\U$, then
each of the coefficients of $\Phi_k\w \mu_j$ in the representation \eqref{palt} tend to
$0$ in $\E^{0,*}(Z\cap \U)$ for each $j$. This precisely means that $T(\hat\phi_{\ell k})$
tend to $0$   in $\E^{0,*}(Z\cap \U)$. Since $T$ is pointwise injective this implies
that $\hat\phi_{\ell k}\to 0$ in $\E^{0,*}(Z\cap \U)$ for each $\ell$.
\end{remark}

\begin{remark}
We only define $\Cu^{0,*}_X$ on the part where $Z$ is smooth, as we there need to embed $L^p_{0,*}(X)$ into a larger space
that allows for more flexibility. We do not know what an appropriate definition of $\Cu^{0,*}_X$ would be over the singular
part of $Z$. In \cite{AL}, we introduce a sheaf $\W^{0,*}_X$ of pseudomeromorphic $(0,*)$-currents on $X$
with the so-called standard extension property, also when $Z$ is singular.
On the part where $Z$ is smooth, $\W^{0,*}_X$ is a subsheaf of $\Cu_X^{0,*}$, and consists of currents which admit a representation \eqref{simple0},
where the $\hat{\psi}_m$ are in $\W^{0,*}_Z \subseteq \Cu^{0,*}_Z$.
\end{remark}

\begin{remark}
We do not know if the embedding  $\Cu_X^{0,*}\to \F$
is an isomorphism, i.e.,
if $\F'=\F$. For any $h$ in $\F$ that can be approximated by smooth forms
$h_\epsilon$ in $\F$, it follows as above that $h$ is in $\F'$, but it is not
clear that this is possible for an arbitrary $h$ in $\F$.
An analogous statement for the subsheaf $\W_X^{0,*}$ is indeed true, see \cite[Lemma~7.5]{AL},
but the proof relies on the fact that elements in $\W_Z^{0,*}$ are in a suitable sense generically smooth, and
does not generalize 
to $\Cu_X^{0,*}$.
\end{remark}

\section{$L^p$-spaces}\label{lpspace}
 Assume that $X$ is Cohen-Macaulay and that the underlying manifold $Z=X_{red}$ is smooth. Recall that
 we have chosen a Hermitian metric on $Z$ and let $dV$ be the associated volume form.
 Assume $1\le p<\infty$.
If $K\subset X$ is a compact subset and $\phi$ is in $\E^{0,*}(X)$ then
$$
\left(\int_{K_{red}}|\phi|^p_X dV\right)^{1/p}
$$
is finite and defines a semi-norm on $\E^{0,*}(X)$.
We define the sheaf $L^p_{loc; 0,*}$ as the completion
of $\E^{0,*}_X$ with respect to these semi-norms.
In particular we get the spaces $L^p_{0,*}(K)$ for any
compact subset $K\subset X$.
For a relatively compact open subset $\V\subset\subset X$
we let $L^p_{0,*}(\V)=L^p_{0,*}(\overline \V)$.
Clearly these spaces are independent of the choice
of $|\cdot|_X$ and Hermitian structure on $Z$.
In the same way we define the sheaf $C^{0,*}_X$
as the completion of $\E^{0,*}_X$ with respect to the semi-norms $\sup_K |\phi|_X$.

\begin{prop}
Let $i\colon X\to \U\subset\C^N$ be a local embedding
and let $\V=X\cap \U$.
If $\phi \in L^p_{0,*}(\V)$, then $\phi\in \Cu^{0,*}(\V)$.
Given coordinates and basis
$(\zeta,\tau,\tau^{\alpha_\ell})$,  
each $\phi \in L^p_{0,*}(\V)$ has a unique
representation \eqref{skata1} where
$\hat{\phi}_\ell \in L^p(\V_{red})$.
If $\phi_k\to \phi$ in $L^p_{0,*}(\V)$, then $\hat\phi_{k\ell}\to \hat\phi_\ell$ in $L^p_{0,*}(\V_{red})$ for each $\ell$.
\end{prop}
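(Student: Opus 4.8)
The plan is to reduce everything, via the monomial representation \eqref{skata1}, to statements about $L^p$-forms on the smooth manifold $Z=X_{red}$; the only real work is a single pointwise estimate comparing $|\cdot|_X$ with the sizes of the coefficients $\hat{\phi}_\ell$. First I would fix the submersion $\pi\colon(\zeta,\tau)\mapsto\zeta$ and the monomial basis $\tau^{\alpha_\ell}$, choose finitely many local generators of the subsheaf $\N_{X,\pi}\subset\N_X$, and note that each of them is an $\Ok_Z$-linear combination of the generators $\La_j$ defining $|\cdot|_X$; hence $|\phi|_{X,\pi}\le C|\phi|_X$ on $\V_{red}$, possibly after shrinking $\U$. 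Together with \eqref{citron1} this gives the pointwise bound $\sum_\ell|\hat{\phi}_\ell|\le C|\phi|_X$ on $\V_{red}$ for smooth $\phi$, and integrating, $\|\hat{\phi}_\ell\|_{L^p(\V_{red})}\le C\|\phi\|_{L^p_{0,*}(\V)}$. Thus $\phi\mapsto(\hat{\phi}_\ell)_\ell$ is a bounded linear map $\E_X^{0,*}(\V)\to\bigoplus_\ell L^p(\V_{red})$ which extends continuously to $L^p_{0,*}(\V)$. This already produces, for an arbitrary $\phi\in L^p_{0,*}(\V)$, coefficients $\hat{\phi}_\ell\in L^p(\V_{red})$, and it gives the last assertion of the proposition: $\phi_k\to\phi$ in $L^p_{0,*}(\V)$ forces $\hat{\phi}_{k\ell}\to\hat{\phi}_\ell$ in $L^p(\V_{red})$ for each $\ell$.

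Next I would identify such a $\phi$ with an element of $\Cu_X^{0,*}(\V)$. Choose smooth $\phi_k\to\phi$ in $L^p_{0,*}(\V)$; by the previous step the coefficients $\hat{\phi}_{k\ell}$ converge in $L^p(\V_{red})\subset\Cu_Z^{0,*}(\V_{red})$ to the $\hat{\phi}_\ell$, so by Lemma~\ref{lma:Fprim} the tuple $(\hat{\phi}_\ell)_\ell$ defines an element $\psi=\sum_\ell\hat{\phi}_\ell\tau^{\alpha_\ell}$ of $\Cu_X^{0,*}(\V)$; and since $\hat{\phi}_{k\ell}\to\hat{\phi}_\ell$ as currents on $Z$, and convergence in $\Cu_X^{0,*}$ amounts to convergence of all the coefficients, we get $\phi_k\to\psi$ in $\Cu_X^{0,*}(\V)$. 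The estimate of the first step makes $\psi$ independent of the approximating sequence, so we obtain a well-defined linear map $L^p_{0,*}(\V)\to\Cu_X^{0,*}(\V)$, and it exhibits the representation \eqref{skata1} of $\phi$ with coefficients in $L^p(\V_{red})$; its uniqueness is the one already known in $\Cu_X^{0,*}$.

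The delicate point is that this map is injective, so that ``$\phi\in L^p_{0,*}(\V)$ implies $\phi\in\Cu_X^{0,*}(\V)$'' is a genuine inclusion. I would start from a smooth sequence $\phi_k$ that is Cauchy in $L^p_{0,*}(\V)$ and whose coefficients $\hat{\phi}_{k\ell}$ (in the fixed basis) tend to $0$ in $L^p(\V_{red})$, and aim to show $\|\phi_k\|_{L^p_{0,*}(\V)}\to0$. Let $\pi^\iota$ be the finite family of generic submersions of \eqref{citron2}, with their adapted bases and coefficients $\hat{\phi}^\iota_{k\ell}$. Applying the first step to each $\pi^\iota$, the sequence $\hat{\phi}^\iota_{k\ell}$ is Cauchy, hence convergent, in $L^p(\V_{red})$; but by Corollary~\ref{ponny} it is obtained by applying a fixed matrix of holomorphic differential operators to $(\hat{\phi}_{kj})_j$, which tends to $0$ in $L^p$ and hence as currents, so $\hat{\phi}^\iota_{k\ell}\to0$ as currents. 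By uniqueness of limits of currents, $\hat{\phi}^\iota_{k\ell}\to0$ in $L^p(\V_{red})$ for all $\iota,\ell$. Then \eqref{citron1} and \eqref{citron2} give the pointwise bound $|\phi_k|_X\le C\sum_{\iota,\ell}|\hat{\phi}^\iota_{k\ell}|$, and integrating yields $\int_{\V_{red}}|\phi_k|_X^p\,dV\to0$, as required.

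I expect this last step to be the main obstacle: in contrast to the reduced case, $|\cdot|_X$ genuinely involves $\zeta$-derivatives of the monomial coefficients, already visible in Example~\ref{simple1}, so the coefficient map of the first step is very far from norm-equivalent, and the naive attempt at injectivity fails; one really has to play all of the generic submersions of \eqref{citron2} against the uniqueness of distributional limits. The remaining bookkeeping (uniform constants, shrinking $\U$, and the passage from smooth forms to limits) is routine.
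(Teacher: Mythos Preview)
Your argument is correct and, for the basic construction, follows the paper's own proof: use $\N_{X,\pi}\subset\N_X$ together with \eqref{citron1} to get $\sum_\ell|\hat\phi_\ell|\lesssim|\phi|_X$ pointwise, integrate to see that the coefficient map is $L^p$-bounded, and pass to limits. The paper's proof stops there and simply declares the representation \eqref{skata1} for $\phi$, invoking \eqref{citron1} and \eqref{citron2} once more for the last statement.

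Where you go beyond the paper is in the injectivity of the resulting map $L^p_{0,*}(\V)\to\Cu^{0,*}(\V)$. The paper asserts the inclusion \eqref{inka} without comment, whereas you supply a proof: if a smooth $L^p$-Cauchy sequence has coefficients tending to $0$ in $L^p$ in one basis, then by Corollary~\ref{ponny} the coefficients in every other basis $(\zeta^\iota,\tau^\iota,(\tau^\iota)^{\alpha^\iota_\ell})$ tend to $0$ distributionally; since those coefficients are themselves $L^p$-Cauchy (same bound, applied to $\pi^\iota$), uniqueness of distributional limits forces them to $0$ in $L^p$; and then the reverse inequalities in \eqref{citron1}--\eqref{citron2} give $\|\phi_k\|_{L^p(\V)}\to0$. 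This is a genuine addition: it is exactly the issue flagged in Remark~\ref{ugn}, resolved here because the Cauchy hypothesis supplies the missing $L^p$-convergence in all bases simultaneously. Your instinct that this is the delicate point is right, and your solution is clean.
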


If $\phi_k$ are smooth and tend to $\phi$ in $L^p_{0,*}(\V)$ it follows that $\phi_k \to \phi$ in $\Cu^{0,*} (\V)$
and hence
\begin{equation}\label{inka}
L^p_{loc;0,*;X}\subset \Cu^{0,*}_X.
\end{equation}

\begin{proof}
If  $\phi\in L^p_{0,*}(\V)$, then by definition there are smooth $\phi_k$ such that
$\|\phi -\phi_k\|_{L^p(\V)}\to 0$.
Since we have unique representations
$$
\phi_k=\sum_\ell \hat\phi_{k\ell}(\zeta) \tau^{\alpha_\ell}
$$
it follows from \eqref{citron1} and \eqref{citron2} that
 $k\mapsto \hat\phi_{\ell k}$ is a Cauchy sequence in $L^p(\V_{red})$ for each $\ell$
and hence converges to a function $\hat\phi_\ell\in L^p(\V_{red})$. Thus
we have a representation \eqref{skata1} for $\phi$, where $\hat\phi_\ell\in L^p(\V_{red})$.
The last statement now follows from \eqref{citron1} and \eqref{citron2}.
\end{proof}

\begin{ex}\label{simple2}
Let $\hat X$ be the space in Example~\ref{simple1} and let $\hat\V=\U'\cap \hat X$,
where $\U'$ is a relatively compact subset of $\U$.
Let $L^{j,p}(\hat\V_{red})$ be the Sobolev space of all $(0,*)$-currents
whose holomorphic derivates up to order $j$ are in $L^p(Z)$.
It follows from \eqref{tomat} that
$L^p(\hat\V)$ can be realized as all expressions of the form \eqref{simple0}, where
$\psi_m\in L^{|M-m|,p}_{0,*}(\hat\V_{red})$.
\end{ex}

For a general Cohen-Macaulay space $X$ there is no such simple way to describe $L^p(X)$ locally in terms of a single
choice of $(\zeta,\tau,\tau^{\alpha_\ell})$.

\begin{remark}\label{ugn}
Assume that $\phi\in \Cu^{0,*}(\V)$ and that its coefficients $\hat\phi_{\iota, \ell}$
with respect to each of the bases
$(\zeta^\iota,\tau^\iota, (\tau^\iota)^{\alpha^{\iota_\ell}})$, cf.~\eqref{citron2},
are in $L^p(\V_{red})$. We  do not know if this implies that
$\phi$ is in $L^p_{0,*}(\V)$.  Consider the coefficients $\hat\phi_{\iota, \ell}$
with respect to a fixed basis $(\zeta^\iota,\tau^\iota, (\tau^\iota)^{\alpha^{\iota_\ell}})$.
If one approximates these coefficients in $L^p$ by smooth forms, then we get convergence in the
norm $|\cdot|_{X,\pi^\iota}$, cf.~\eqref{citron1}. However, there is no
reason to believe that they converge in the other
norms in the right hand side of \eqref{citron2}.  The problem is that the transition
matrix \eqref{pommes} involves derivatives.
\end{remark}

Notice that if we have an embedding $\iota\colon X\to \hat X$ as in Section~\ref{pnorm},
then, with the notation used there,
$$
\|\phi\|_{L^p(X\cap\U)}\sim \sum_j \| \gamma_j \phi\|_{L^p(\hat X\cap \U)}.
$$

\begin{remark}\label{heuristic}
%
Here is a heuristic proof of Theorem~\ref{main}.
For simplicity, let us assume that only two submersions $\pi^1$ and $\pi^2$ are needed in \eqref{citron2}.
Assume that $\phi$ is in $L^p_{0,1}(\V)$.  Then we can find a solution $u^\iota$ in $\Cu^{0,0}(\V)$ to $\dbar u^\iota=\phi$
so that the coefficients with respect to $(\zeta^\iota,\tau^\iota, (\tau^\iota)^{\alpha^{\iota_\ell}})$ of
$u^\iota$ are  in $L^p(\V_{red})$.  This means that $|u^\iota|_{X,\pi^\iota}$ is in
$L^p(\V_{red})$ for each $\iota$.  Now $h=u^2-u^1$ is $\dbar$-closed, thus holomorphic, and hence bounded.
It follows that also $|u^2|_{X,\pi^1}=|u^1+h|_{X,\pi^1}$ is in $L^p(\V_{red})$.  In view of \eqref{citron2} one
might conclude that $u^2$ actually is in $L^p_{0,0}(\V)$
if we disregard the problem pointed out in Remark~\ref{ugn}.
Clearly, this argument breaks down if $\phi$ has bidegree $(0,q+1)$,  $q\ge 1$.
\end{remark}

It is not clear to us if it is possible to make this outline into
a strict argument. In any case, we will prove Theorem~\ref{main} by means of an integral formula from
\cite{AL}.  Besides being a closed formula for a solution, it also
 makes sense  at non-Cohen-Macaulay points, and offers a possibility
to obtain a~priori estimates, cf.~Section~\ref{nonch}.  Hopefully it could lead to
results for general $(0,q)$-forms.

\section{Koppelman formulas on $X$} \label{koppar}

\subsection{Koppelman formulas in $\C^N$}

Let  $\U\subset\C^N$ be a domain, and
let $\U' \subset\subset\U$. Moreover,  let $\delta_\eta$ be contraction
by the vector field
$$
2\pi i\sum_{j=1}^N (\zeta_j-z_j)\frac{\partial}{\partial \zeta_j}
$$
in $\U_\zeta\times\U'_z$ and let $\nabla_\eta=\delta_\eta-\dbar$.
Assume that  $g=g_{0,0}+\cdots +g_{n,n}$ is a smooth form such that
$g_{k,k}$ has bidegree $(k,k)$ and only contains holomorphic differentials with respect
to $\zeta$. We say that $g$ is a weight in $\U$ with respect to $\U'$ if
$\nabla_\eta g=0$ and $g_{0,0}$ is $1$
when $\zeta=z$.  Notice that if $g$ and $g'$ are weights, then $g'\w g$ is again a weight.
The basic observation is that if $g$ is a weight, then
\begin{equation}\label{basic}
\phi(z)=\int g\phi, \quad z\in \U'
\end{equation}
if $\phi$ is holomorphic in $\U$, see, \cite[Proposition~3.1]{Aint}.

If $\U$ is pseudoconvex, following \cite[Example~1]{Aint2},
we can find  a weight $g$, with respect to $\U'$,  with compact support in $\U$,
such that $g$ depends holomorphically on $z$ and has no anti-holomorphic differentials
with respect to $z$.
For our purpose we can assume that these domains are
balls with center at $0\in\U$. Then we can take
\begin{equation}\label{gval}
g=\chi - \dbar\chi \w \frac{\sigma}{\nabla_\eta\sigma}
=\chi - \dbar \chi \wedge \sum_{\ell=1}^N \frac{1}{(2\pi i)^\ell}\frac{\zeta \cdot d\bar\zeta\w (d\zeta\cdot d\bar \zeta)^{\ell-1}}
{(|\zeta|^2-\bar\zeta\cdot z)^\ell},
\end{equation}
where
$$
\sigma= \frac{1}{2\pi i}\frac{\zeta \cdot d\bar\zeta}{|\zeta|^2-\bar\zeta\cdot z}.
$$
Here $\chi$ is a cutoff function in $\U$ that is $1$ in a \nbh of $\overline{\U'}$.  It is
convenient to choose it of the form $\chi=\tilde \chi(|\zeta|^2)$ where $\tilde \chi(t)$ is identically
$1$ close to $0$ and $0$ when $t$ is large.

Elaborating this construction one can obtain Koppelman formulas for $\dbar$.  Let
$$
b=\frac{1}{2\pi i} \frac{\sum_{j=1}^N (\overline{\zeta_j- z_j}) d\zeta_j}{|\zeta-z|^2}
$$
so that $\delta_\eta b=1$ where $\zeta\neq z$, and
\begin{equation}\label{kanin}
B=\frac{\nabla_\eta b}{\dbar b}=b+ b\wedge\dbar b+\cdots+ b\w(\dbar b)^{N-1}
\end{equation}
is the full Bochner-Martinelli form, cf. \cite[Section~2]{Aint}.
Then $\nabla_\eta B=1-[\Delta]'$, where $[\Delta]'$ is the component with full degree in $d\zeta$
of the current of integration over the diagonal $\Delta\subset \U\times\U'$. Now
\begin{equation}\label{holland}
\K\phi=\int_\zeta g\w B\w \phi
\end{equation}
defines integral operators $\E^{0,*+1}(\U)\to \E^{0,*}(\U')$
such that $\phi=\dbar\K\phi+\K(\dbar\phi)$ in $\U'$.
The integral in \eqref{holland} is, by definition,
the pushforward $\pi_*( g\w B \w\phi)$, where $\pi$ is the natural projection
$\U\times\U'\to \U'$.

\subsection{Hefer morphisms}\label{poker}

Let $(E,f)$ be a locally free resolution as in \eqref{krokus}.
As in \cite{Aint2} and elsewhere, we equip $E:=\oplus E_k$ with a superstructure, by splitting
into the part $\oplus E_{2k}$ of even degree
and the part $\oplus E_{2k+1}$ of odd degree. 
An endomorphism $\alpha \in \End(E)$
is even if it preserves the degree, and odd  if it switches the degree.
The total degree $\deg \alpha$ of a form-valued morphism $\alpha$ is the sum of the endomorphism degree
and the form degree of $\alpha$.  For instance, $f$ is an odd endomorphism.
The contraction by $\delta_\eta$ is a derivation (and has odd degree) that takes the total degree into account,
so if $\alpha$ and $\beta$ are two morphisms, then
$\delta_\eta(\alpha \beta) = \delta_\eta \alpha + (-1)^{\deg \alpha} \alpha \delta_\eta \beta$.

In order to construct
division-interpolation formulas with respect to $(E,f)$, in \cite{Aint2} was introduced the notion
of an associated family $H=(H^\ell_k)$ of Hefer morphisms. Here $H^\ell_k$ are holomorphic
$(k-\ell)$-forms with values in $\Hom(E_{\zeta,k}, E_{z,\ell})$ so they are even.
They are connected in the
following way:  To begin with, $H^\ell_k=0$ if $k-\ell< 0$, and
$H_\ell^\ell$ is equal to $I_{E_\ell}$ when $\zeta=z$. In general,
\begin{equation}\label{heferlikhet}
\delta_\eta H^\ell_{k+1}=H_{k}^{\ell} f_{k+1}(\zeta) - f_{\ell+1}(z)H_{k+1}^{\ell+1}.
\end{equation}
Let $R$ and $U$ be the associated currents, see Section~\ref{prel}. 
The basic observation is that $g'=f_1(z)H^1U+H^0R$ is a kind of non-smooth weight so that
if $\Phi$ is holomorphic, then
\begin{equation}\label{basic2}
\Phi(z)=\int_\zeta g'\w g\Phi = f_1(z)\int_\zeta H^1U\w g\Phi+\int_\zeta H^0R\w g \Phi, \quad z\in \U'.
\end{equation}
When defining these integral operators, we tacitly understand that only components of the integrands that
contribute to the integral should be taken into account.

\subsection{Local Koppelman formulas on $X$}  \label{sec:Kdef}
Now assume that our  non-reduced space $X$ is locally embedded in a pseudoconvex domain $\U$.
Let $\V=X\cap\U$ and $\V'=X\cap \U'\subset\subset \V$.
Let $(E,f)$ be a locally free resolution of $\Ok_X$ as in \eqref{krokus}.
Then $R\Phi=0$ if $\Phi=0$, cf.~\eqref{duality}, and hence
\eqref{basic2} is an intrinsic representation formula
$$
\phi(z)=\int_\zeta p(\zeta,z)\phi(\zeta), \quad z\in  \V',
$$
for $\phi\in\Ok(\V' )$.
Following \cite{AS} and \cite{AL}, one can define operators
\begin{equation}\label{kvadd}
\K\phi(z)=\int_\zeta g\w B\w H^0R\w \phi, \quad z\in \V'
\end{equation}
mapping
$(0,*+1)$-forms in $\V$ to $(0,*)$-forms in $\V'$. However, not even in
'good' cases the formula \eqref{kvadd}, as it stands,
produces a form that is smooth in $\U'$,  cf.~\cite[Remark~10.4]{AL},
so the precise definition of $\K\phi$
is somewhat more involved, cf.~\cite[Section~9]{AL}: If $\mu \in \Homs(\Ok_\U/\J,\CH^Z_\U)$ in $\U'$, then
$\mu(z)\w R(\zeta)$ is a well-defined
\pmm current in $\U\times \U'$. Moreover, $B$ is almost semi-meromorphic in  $\U\times \U'$
and smooth outside the diagonal.
Hence $\mu(z)\w g\w B\w H^0R\w \phi$ is well-defined in $\U\times \U'$,
as the limit of
$\mu(z)\w g\w B^\epsilon\w H^0R\w \phi$, where $B^\epsilon=\chi(|\zeta-z|^2/\epsilon)B$,
cf.~\eqref{eq:PMlimits}.
The equality \eqref{kvadd} is to be interpreted as the fact that there is a unique \pmm current $u=\K\phi$ in $\V'$ such that
$$
\mu\w u=\int_\zeta \mu(z)\w g\w B\w H^0R\w \phi,
$$
for all $\mu \in \Homs(\Ok_\U/\J,\CH^Z)$ in $\U'$.
By \cite[Theorem~9.1]{AL} the operators so defined satisfy the Koppelman formula
\begin{equation}\label{koppel}
\phi=\dbar\K\phi+\K(\dbar\phi)
\end{equation}
in $\V'$.
It turns out, \cite[Theorem~10.1]{AL}, that $\K$ maps $\E^{0,*+1}(\V)\to \E^{0,*}(\V')$ if
$Z$ is smooth and $X$ is Cohen-Macaulay.

\begin{remark}
 In general, $\K\phi$ is not necessarily smooth in $\V'$, so
one has to replace $\E^{0,*}_X$ by the sheaves $\A^{0,*}_X$, cf.~Introduction, \cite{AL} and Section~\ref{nonch}.
\end{remark}

Let us now assume that  $Z=X_{red}$ is smooth.
 By shrinking $\U$ we can assume that we have coordinates $(\zeta,\tau)$ in $\U$
as usual, and we let $(z.w)$ be the  corresponding 'output' coordinates in $\U'$.
If in addition $X$ is Cohen-Macaulay we can choose
$(E,f)$ so that the associated free resolution \eqref{krokus} of $\Ok_\U/\J$
has length $\kappa=N-n$.
Then $R$ has just one component $R_\kappa$.
For a smooth $(0,*+1)$-form $\phi$ in $\V$, then
\begin{equation}\label{neptun}
\K\phi(z,w)= \int_{\zeta,\tau} (g\w B)_n \w H_\kappa^0R_\kappa\w \phi, \quad (z,w)\in \V',
\end{equation}
where $B$ is the Bochner-Martinelli form with respect to $(\zeta,\tau; z,w)$,
and $(\ \ )_n$ denotes the component of bidegree $(n,n-*-1)$ in $(\zeta,\tau)$.

\section{Extension of Koppelman formulas to currents}\label{koppext}

We keep the notation from the preceding section.

\begin{prop} \label{algot}
The operator $\K\colon \E^{0,*+1}(\V)\to \E^{0,*}(\V')$ in \eqref{neptun} extends to
an operator $\Cu^{0,*+1}(\V)\to \Cu^{0,*}(\V')$ and the Koppelman formula
\eqref{koppel} still holds in $\V'$.
\end{prop}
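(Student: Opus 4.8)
The plan is to extend $\K$ by continuity, using the density of $\E^{0,*}_X$ in $\Cu^{0,*}_X$ together with the fact that $\K$ is essentially a convolution-type operator whose action against a fixed Coleff--Herrera current $\mu(z)$ can be computed entry-wise in the representations \eqref{skata1}. First I would fix the local embedding $i\colon X\to \U$, coordinates $(\zeta,\tau)$ and $(z,w)$, a monomial basis $\tau^{\alpha_\ell}$ as in Section~\ref{putte0}, and a generating set $\mu_1,\dots,\mu_\rho$ for $\Homs(\Ok_\U/\J,\CH^Z_\U)$. For a smooth $(0,*+1)$-form $\phi$ on $\V$ with representation $\phi=\sum_\ell \hat\phi_\ell(\zeta)\tau^{\alpha_\ell}$, I want to show that the coefficients $\widehat{(\K\phi)}_\ell$ of $\K\phi$ (with respect to the output basis $w^{\alpha_\ell}$) are given by integral operators of the form $\widehat{(\K\phi)}_\ell(z)=\sum_m \int_\zeta k_{\ell m}(\zeta,z)\w \hat\phi_m(\zeta)$, where each kernel $k_{\ell m}$ is a fixed almost semi-meromorphic form on $Z\times Z$ that is smooth off the diagonal and has the local integrability needed to map $\Cu^{0,*}_Z(\V_{red})\to\Cu^{0,*}_Z(\V'_{red})$. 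Granting this, the right-hand side of the displayed formula in \eqref{neptun} makes sense for $\hat\phi_m\in\Cu^{0,*}_Z$, and the resulting tuple defines an element of $\Cu^{0,*}(\V')$ via the representation \eqref{skata1}; since $\E^{0,*}_Z$ is dense in $\Cu^{0,*}_Z$ and these kernel operators are continuous for the current topology (standard, as for Bochner--Martinelli-type operators on the smooth manifold $Z$), this extension is the unique continuous one agreeing with \eqref{neptun} on smooth forms.

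To produce the kernels $k_{\ell m}$ I would pair \eqref{neptun} with $\mu_j(z)$ and use the definition of $\K$ recalled in Section~\ref{sec:Kdef}: $\mu_j\w\K\phi=\int_\zeta \mu_j(z)\w g\w B\w H^0_\kappa R_\kappa\w\phi$. On the $\zeta$-side, $R_\kappa$ is the Coleff--Herrera current whose components generate $\Homs(\Ok_\U/\J,\CH^Z_\U)$, so writing $R_\kappa=\sum_k \gamma_k\,\hat\mu(\zeta)$ as in \eqref{tomat3} and using \eqref{hatmuintegral} (i.e.\ integration against $\hat\mu$ picks out the Taylor coefficients in $\tau$ and restricts to $\tau=0$) collapses the $\tau$-integration to a finite sum of $\zeta$-integrals over $Z\cap\U$ of $\hat\phi_m(\zeta)$ times smooth coefficients. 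On the $z$-side, the pairing with $\mu_j(z)$ similarly collapses the $w$-integration to evaluation of finitely many $w$-derivatives at $w=0$, and since the matrix $T$ of \eqref{parra2} is pointwise injective (Lemma~\ref{lma:pointwise}), recovering the $w$-coefficients of $\K\phi$ from the collection $\{\mu_j\w\K\phi\}_j$ is a pointwise-invertible-on-its-image linear algebra operation with holomorphic (hence smooth) coefficients. The remaining kernel is then built from the restriction to $Z\times Z$ of $g\w B$ and the Hefer form $H^0_\kappa$, all of which are almost semi-meromorphic and smooth off the diagonal; $B$ contributes the expected $|\zeta-z|^{-(2n-1)}$-type singularity on $Z$ (dimension $n$), which is locally integrable, so each $k_{\ell m}$ defines a continuous operator on $\Cu^{0,*}_Z$.

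Once the extended operator $\K\colon\Cu^{0,*+1}(\V)\to\Cu^{0,*}(\V')$ is in place, the Koppelman formula $\phi=\dbar\K\phi+\K(\dbar\phi)$ follows by continuity: it holds for smooth $\phi$ by \cite[Theorem~9.1]{AL}, both sides are continuous in $\phi$ for the current topology (using that $\dbar$ is continuous, cf.\ the remark after \eqref{cucomplex} that $\phi_k\to\phi$ implies $\dbar\phi_k\to\dbar\phi$, and that $\E^{0,*}_X$ is dense in $\Cu^{0,*}_X$), so the identity passes to the limit. I expect the main obstacle to be the second step: verifying that pairing \eqref{neptun} with the $\mu_j(z)$ and $R_\kappa(\zeta)$ really does reduce $\K$ to a matrix of honest kernel operators on the smooth base $Z$ with integrable singularities, i.e.\ carefully controlling how the principal-value/residue factors $\dbar(d\tau/\tau^{\bullet})$ in both variables interact with $g\w B\w H^0_\kappa$ — one must check that no non-integrable singularity is created when $\tau$- and $w$-derivatives hit the Bochner--Martinelli kernel, and that the limits defining the products (as in \eqref{eq:PMlimits}) are compatible with taking $\hat\phi_m$ to be merely an $L^p_{loc}$ or current coefficient rather than smooth. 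This is where the Cohen--Macaulay hypothesis (so that $R=R_\kappa$ is a single Coleff--Herrera current) and the smoothness of $Z$ are used essentially.
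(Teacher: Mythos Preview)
Your outline is correct and follows the same route as the paper: regularize the coefficients $\hat\phi_\ell$, pair with $\mu_j(z)$, collapse the $\tau$-integration via the expansion \eqref{palt} of $R_\kappa$, rewrite the result in the basis $w^{\alpha_\ell}$, and read off that each $\K_\ell$ is a matrix of smooth forms times the Bochner--Martinelli kernel on $Z$; the Koppelman formula then passes to the limit.

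The obstacle you flag at the end is the one genuine gap in your write-up, and the paper resolves it by a simple but essential observation that you should make explicit: since $R_\kappa$ is annihilated by $\bar\tau_j$ and $d\bar\tau_j$, and $\mu_j(z,w)$ is annihilated by $\bar w_j$ and $d\bar w_j$, all occurrences of $\bar\tau,\ d\bar\tau,\ \bar w,\ d\bar w$ in $g\w B$ can be dropped \emph{before} any residue is evaluated. With the choice \eqref{gval} this replaces the full $B(\zeta,\tau;z,w)$ and $g(\zeta,\tau;z,w)$ by $B(\zeta,z)$ and $g(\zeta,z)$, so the only factor still depending on $(\tau,w)$ is the holomorphic Hefer form $H^0_\kappa$ (and the representative $\Phi$). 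Consequently the $\tau$- and $w$-residue operations act only on polynomials in $\tau,w$ and never touch the Bochner--Martinelli singularity; no extra derivatives fall on $B$, and the resulting kernels $k_{\ell m}(\zeta,z)$ are smooth forms times $B(\zeta,z)$, hence locally integrable and continuous on currents. Once you insert this step, your argument is complete and matches the paper's proof.
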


The proposition gives a new proof of the exactness of \eqref{cucomplex}.

\begin{proof}
Let us choose a basis $\tau^{\alpha_\ell}$ for $\Ok_X$ in $\U$, as in Section~\ref{putte0}.
If we represent $\phi \in \Cu^{0,*+1}(\V)$ by $\Phi=\sum \hat{\phi}_\ell (\zeta) \tau^{\alpha_\ell}$,
where $\hat{\phi}_\ell(\zeta) \in \Cu^{0,*+1}(Z \cap \U)$, and  regularize each $\hat{\phi}_\ell$
by $\hat{\phi}_\ell^\epsilon$, we obtain smooth
$\Phi^\epsilon$, representing smooth $\phi^\epsilon$ that tend to $\phi$.
Note that the weight $g$ defining $\K$ has support in the $\zeta$-variable in a fixed
compact set $K \subset \U$, and thus $\K\phi^\epsilon$ is defined when $\epsilon$ is small enough.
We want to show that $\K\phi := \lim_{\epsilon \to 0} \K\phi^\epsilon$ is a
well-defined object in $\Cu^{0,*}(\V')$.

By assumption $B$ is of the form \eqref{kanin},  where
\begin{equation}\label{kanin2}
b=\frac{1}{2\pi i}
\frac{\sum_{j=1}^n (\overline{\zeta_j- z_j}) d\zeta_j+ \sum_{i=1}^\kappa (\overline{\tau_i- w_i}) d\tau_i}
{|\zeta-z|^2+|\tau-w|^2}.
\end{equation}
Take $\mu = \mu(z,w) \in \Homs(\Ok_\U/\J,\CH^Z_\U)$.
 Since $R$ is annihilated by  $\bar{\tau}$ and $d\bar{\tau}$, and  $\mu$ is
 annihilated by $\bar{w}$ and $d\bar{w}$,  see Section~\ref{prel},
 we have that
    \begin{equation}\label{midsommar1}
 \mu(z,w)\w   \K\phi^\epsilon  = \mu(z,w)\w \left(\int_{\zeta,\tau} g(\zeta,z) \wedge B(\zeta,z)\wedge H^0_\kappa R_\kappa \wedge \phi^\epsilon\right),
    \end{equation}
where $B(\zeta,z)$ is the Bochner-Martinelli kernel with respect to the variables $\zeta,z$, and $g(\zeta,z)$ only depends
on $\zeta$ and $z$ (provided that it is chosen as in \eqref{gval}, but for $(\zeta,\tau)$ and $(z,w)$, however,
this special choice of $g$ is not important).
More precisely, in view of the representation \eqref{palt} of $R_\kappa$, its action involves
holomorphic derivatives with respect to $\tau$ followed by evaluation at $\tau=0$, cf.~\eqref{hatmuintegral}.
Therefore all terms involving $\bar\tau$ can be cancelled
without affecting the integral. For the same reason all terms involving $\bar w$ disappear.

Therefore  $H$ is the only factor in the integral that depends on $w$. 
Using the expansions of the form \eqref{simple0} of $\phi$ together with the fact that $R_\kappa$ is annihilated by $\J$, and the expansion
\eqref{palt} of $R_\kappa$, and evaluating the $\tau$-integral in the right hand side of \eqref{midsommar1}
we get
$$
\mu(z,w)\w \left(\int_{\zeta} g(\zeta,z) \wedge B(\zeta,z)\wedge \sum_{\ell'=0}^{\nu-1} h_{\ell'}(\zeta,z, w) \hat\phi^\epsilon_{\ell'} \right),
$$
for appropriate holomorphic functions $h_{\ell'}$.
If we express each occurrence of $w$
in the basis $w^{\alpha_\ell}$ as in \eqref{skata1} modulo $\J$ (with $w$ instead of $\tau$)
and using that $\mu$ is annihilated by $\J$, we get
$$
\mu\w \K\phi^\epsilon=
\mu(z,w)\w \sum_{\ell=0}^{\nu-1} w^{\alpha_\ell} \int_{\zeta} g(\zeta,z) \wedge B(\zeta,z)\wedge \sum_{\ell'=0}^{\nu-1} h_{\ell',\ell}(\zeta,z) \hat\phi^\epsilon_{\ell'},
$$
where $h_{\ell',\ell}$ are polynomials in $\zeta,z$.
Thus
 \begin{equation*}
         \mu(z,w)\w \K\phi^\epsilon  = \mu(z,w)\w \sum_\ell \K_\ell(\hat{\phi}^\epsilon) w^{\alpha_\ell},
    \end{equation*}
    where the $\K_\ell (\hat\phi^\epsilon)$  is the result of multiplying the tuple $(\hat\phi_{\ell'}^\epsilon)$
by  a matrix of smooth forms in $\zeta,z$ followed by convolution by the Bochner-Martinelli form $B(\zeta)$.
    Therefore, each limit $\lim_{\epsilon \to 0}\K_\ell(\hat{\phi}^\epsilon) =: \K_\ell(\phi)$ exists in the sense
    of currents on $Z$ and is  independent of the regularization $\hat{\phi}^\epsilon$, and we see that
 $\K(\phi) = \sum_\ell \K_\ell(\phi) w^{\alpha_\ell}=\lim \K(\phi^\epsilon)$ is well-defined.
Since the Koppelman formula holds for $\phi^\epsilon$, it follows that it also holds for
    $\phi$ by letting $\epsilon \to 0$.
\end{proof}

\section{Comparison of Hefer mappings}\label{comparison}

We will use an instance of the following general result.

\begin{lma}\label{kotor}
    Let $a : (\hat E,\hat{f}) \to (E,f)$ be a morphism of complexes, and let $\hat H$ and $H$ denote
 holomorphic  Hefer mappings associated to $(\hat E,\hat{f})$ and $(E,f)$, respectively.
Then (locally) there exist holomorphic $(k-\ell+1)$-forms $C^\ell_k$ with values in $\Hom(\hat E_{\zeta,k},E_{z,\ell})$
such that
\begin{equation}\label{grus1}
C_k^\ell=0, \quad  k<\ell,
\end{equation}
\begin{equation}\label{grus2}
\delta_\eta C_\ell^\ell=H^\ell_\ell a_\ell(\zeta)-a_\ell(z)\hat H^\ell_\ell,
\end{equation}
and
\begin{equation}\label{grus3}
 \delta_\eta C^\ell_k=
 H^\ell_k a_k(\zeta) - a_\ell(z) {\hat H}^\ell_k  -  C^\ell_{k-1} \hat f_k(\zeta)- f_{\ell+1}(z) C^{\ell+1}_k.
\end{equation}
\end{lma}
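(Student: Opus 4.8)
The strategy is to construct the forms $C^\ell_k$ by downward induction on $\ell$ and, for fixed $\ell$, upward induction on $k$, at each step solving a $\delta_\eta$-equation whose right-hand side is already known to be $\delta_\eta$-closed. Recall the standard mechanism (as in \cite{Aint2}): on $\U_\zeta\times\U'_z$ the operator $\delta_\eta$ is contraction by a vector field vanishing precisely on the diagonal, so a holomorphic form $\Theta$ of positive degree with $\delta_\eta\Theta=0$ can be written $\Theta=\delta_\eta\Xi$ for some holomorphic $\Xi$ (one degree higher), locally; this is the only analytic input, and it is exactly how the Hefer forms $H^\ell_k$ and $\hat H^\ell_k$ themselves are produced. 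So the whole proof reduces to checking that the prescribed right-hand sides are $\delta_\eta$-closed, given the relations already established for $H$, $\hat H$ and the chain map $a$.

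First I would treat the base case $k=\ell$. The prescription \eqref{grus2} asks for $C^\ell_\ell$ with $\delta_\eta C^\ell_\ell=H^\ell_\ell a_\ell(\zeta)-a_\ell(z)\hat H^\ell_\ell$. Since $H^\ell_\ell=I_{E_\ell}$ and $\hat H^\ell_\ell=I_{\hat E_\ell}$ when $\zeta=z$, and $a_\ell$ is holomorphic, the right-hand side vanishes on the diagonal; being holomorphic of form-degree $0$ in the endomorphism part but degree $1$ as a form (it is a $(k-\ell+1)=1$-form), $\delta_\eta$ of it is a function vanishing on the diagonal, hence the form is $\delta_\eta$-exact locally. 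This gives $C^\ell_\ell$; set $C^\ell_k=0$ for $k<\ell$, which is \eqref{grus1}.

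For the inductive step, fix $\ell$ and suppose $C^{\ell+1}_k$ are constructed for all $k$ (the downward induction on $\ell$) and $C^\ell_{k-1}$ is constructed; I want $C^\ell_k$ satisfying \eqref{grus3}. Denote the desired right-hand side by
$$
\Theta^\ell_k:=H^\ell_k a_k(\zeta)-a_\ell(z)\hat H^\ell_k-C^\ell_{k-1}\hat f_k(\zeta)-f_{\ell+1}(z)C^{\ell+1}_k.
$$
The key computation is to verify $\delta_\eta\Theta^\ell_k=0$. Here I would expand $\delta_\eta$ term by term using the Leibniz rule with the super-sign conventions from Section~\ref{poker}, substituting: $\delta_\eta H^\ell_k=H^\ell_{k-1}f_k(\zeta)-f_{\ell+1}(z)H^{\ell+1}_k$ from \eqref{heferlikhet}, the analogous identity for $\hat H$, the relation $a_{k-1}\hat f_k=f_k a_k$ expressing that $a$ is a chain map, $\delta_\eta C^\ell_{k-1}$ from \eqref{grus3} at level $k-1$ (already known), and $\delta_\eta C^{\ell+1}_k$ from \eqref{grus3} at level $\ell+1$ (known by the downward induction), together with $f_{\ell+1}(z)f_{\ell+2}(z)=0$ and $\hat f_{k-1}(\zeta)\hat f_k(\zeta)=0$. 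Everything should cancel in pairs. Once $\delta_\eta\Theta^\ell_k=0$ is established, the same local solvability of $\delta_\eta$ yields $C^\ell_k$ with $\delta_\eta C^\ell_k=\Theta^\ell_k$, completing the induction; one also checks $C^\ell_k=0$ for $k<\ell$ is consistent with \eqref{grus3} at those indices (both sides vanish, using $H^\ell_k=\hat H^\ell_k=0$ for $k<\ell$ and the vanishing of the relevant $C$'s).

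The main obstacle is the bookkeeping in the closedness computation: keeping track of the super-signs $(-1)^{\deg\alpha}$ when distributing $\delta_\eta$ over products of form-valued endomorphisms of mixed degree, and making sure the boundary terms produced by $\delta_\eta C^\ell_{k-1}$ and $\delta_\eta C^{\ell+1}_k$ line up (with correct signs) against the terms coming from $\delta_\eta H^\ell_k$ and $\delta_\eta\hat H^\ell_k$ and from the chain-map identity. This is purely formal once the sign conventions are fixed, but it is where essentially all the work lies; the analytic step (solving $\delta_\eta\Xi=\Theta$ locally for $\delta_\eta$-closed holomorphic $\Theta$) is standard and identical to the construction of $H$ itself.
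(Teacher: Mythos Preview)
Your approach is essentially the same as the paper's: establish the base case \eqref{grus2} by noting that its right-hand side vanishes on the diagonal (a small slip in your write-up---this right-hand side is a $0$-form, not a $1$-form; it is $C^\ell_\ell$ that has form-degree $1$) and then invoke \cite[Lemma~5.2]{Aint2}, and for the inductive step verify that the right-hand side of \eqref{grus3} is $\delta_\eta$-closed before solving. The only organizational difference is that the paper runs a single induction on $k-\ell$ (which simultaneously supplies both $C^\ell_{k-1}$ and $C^{\ell+1}_k$) rather than your nested double induction, and it carries out the closedness check explicitly by suppressing indices and expanding $\delta_\eta\big(Ha-a\hat H-C\hat f+fC\big)$ using $\delta H=Hf-fH$, $\delta\hat H=\hat H\hat f-\hat f\hat H$, $fa=a\hat f$, $ff=0$, $\hat f\hat f=0$.
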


Here, just as in \cite{LarComp}, we consider $a$ as a morphism in $\End(\oplus (\hat{E}_k \oplus E_k))$,
and thus $a$ is a morphism of even degree, cf.~Section~\ref{poker}.

\begin{proof} Since $H^\ell_\ell$ and $\hat H^\ell_\ell$
are the identity mappings on $E_{\ell,z}$ and $\hat E_{\ell,z}$, respectively, when $\zeta=z$,
one can solve the equation \eqref{grus2} by \cite[Lemma~5.2]{Aint2}. We now proceed by induction
over $k-\ell$. We know the lemma holds if $k-\ell\le 0$ so let us assume that it is proved
for $k-\ell\le m$ and assume $k-\ell=m+1$.
By \cite[Lemma~5.2]{Aint2}, it is then enough to see that the right hand side of \eqref{grus3}
is $\delta_\eta$-closed. To simplify notation we suppress indices and variables.
By \eqref{heferlikhet}, $\delta H=Hf-fH$ and $\delta \hat H = \hat H \hat f - \hat f \hat H$. In addition, $fa=a\hat f$ and since
$f$ is of odd degree, while $a$ is of even degree, $\delta f=-f\delta$ and $\delta a = a \delta$.
We then have, using that $ff=0$ and $\hat f\hat f=0$,
\begin{multline*}
\delta\big(Ha-a\hat H -(C\hat f-f(z) C)\big)=\\
(Hf-fH)a-a(\hat H\hat f-\hat f\hat H)-(Ha-a\hat H-fC)\hat f +f(Ha-a\hat H-C\hat f)
\end{multline*}
and using the relations above it is readily verified that the right hand side vanishes.
\end{proof}

\section{$L^p$-estimates in special cases}\label{orm}

In this section we consider the space  $\hat X$,  $\Ok_{\hat X}=\Ok_\U/\I$,
in Example~\ref{simple1} where, in a local embedding and suitable coordinates $(\zeta,\tau)$ in $\U$,
$\I=\left<\tau^{M+\1}\right>$.
\smallskip

Since $\I$ is a complete intersection, the Koszul complex
provides a
resolution of $\Ok_\U/\I$. That is, if $e_1,\ldots,e_\kappa$ is a nonsense basis for the trivial
vector bundle $\hat E_1\simeq \C^\kappa\times\U$, then the resolution is generated
by $(\hat E, \hat f)$, where $\hat E_k=\Lambda^k \hat E_1$,
each $\hat f_k$ is contraction by
$$
\tau_1^{M_1+1} e_1^* + \cdots + \tau_\kappa^{M_{\kappa}+1} e_\kappa^*,
$$
and  $e_j^*$ is the dual basis.
The associated residue current is
$$
\hat R_\kappa=\dbar\frac{1}{\tau_1^{M_1+1}}\w\ldots\w\dbar\frac{1}{\tau_\kappa^{M_\kappa+1}} \wedge e_1 \wedge \dots \wedge e_\kappa,
$$
see for example \cite[Corollary~3.5]{ACH}.
In $\U\times\U'$ we use the coordinates $(\zeta,\tau; z,w)$.
If
$$h = \frac{1}{2\pi i}\sum_j \sum_{0 \leq \alpha_j \leq M_j} \tau_j^{\alpha_j} w_j^{M_j-\alpha_j} d\tau_j \wedge e_j^*,$$
then it is readily checked that a choice of Hefer forms $\hat H_k^\ell$ is given by contraction by $\wedge^{k-\ell} h$.
In particular,
$$
\hat H^0_\kappa=\pm \frac{1}{(2\pi i)^\kappa}\sum_{0\le \alpha\le M}w^\alpha \tau^{M-\alpha} d\tau_1\w\ldots d\tau_\kappa \wedge (e_1 \wedge \dots \wedge e_\kappa)^*,
$$
where we use the multiindex notation
$
w^{\alpha}=w_1^{\alpha_1}\cdots w_\kappa^{\alpha_\kappa}. $
In particular, with the notation \eqref{eq:shorthand}, and the formula \eqref{eq:simpleTransformationLaw},
$$
\hat H^0_\kappa \hat R_\kappa =
\frac{1}{(2\pi i)^\kappa}\sum_{\mathbf{0} \leq \alpha \leq M} w^\alpha \dbar\frac{d\tau}{\tau^{\alpha+\1}}.
$$
Using the notation from Section~\ref{sec:Kdef} and Section~\ref{koppext}, we consider the operators
\begin{equation} \label{eq:hatKformula}
\hat\K\psi=\int_{\zeta,\tau} g\w B\w \hat H_\kappa^0 \hat R_\kappa\w \psi
\end{equation}
for $\psi\in\E^{0,*+1}(\U\cap \hat X)$.
As was noted in
Section~\ref{comparison},
only the parts of $B$ and $g$ depending on $z,\zeta$ are relevant.
In view of  \eqref{hatmuintegral}
we therefore get
$$
\hat\K\psi(z,w)=\sum_{\alpha\le M} w^\alpha \int_\zeta g(\zeta,z)\w B(\zeta,z)\w \frac{1}{\alpha !}
\frac{\partial \psi}{\partial\tau^\alpha}(\zeta,0).
$$
Since $B(\zeta,z)$ only depends on $\zeta-z$, by a change of variables, we see that
\begin{equation}\label{plupp}
\frac{\partial(\hat \K\psi)}{\partial w^m \partial z^\gamma} (z,0) =
\sum_{\beta'+\beta''+\delta=\gamma} \int_\zeta B(\zeta,z)\w c_{\beta',\beta''} \frac{\partial g}{\partial z^{\beta'}\partial \zeta^{\beta''}}(\zeta,z)\w
\frac{\partial\psi}{\partial \zeta^{\delta} \partial\tau^m} (\zeta,0)
\end{equation}
for appropriate constants $c_{\beta',\beta''}$.
Since $B(\zeta,z)$ is uniformly integrable in $\zeta$ and $z$, and $g$ is smooth, it follows by, e.g., \cite[Appendix~B]{Ran}, that
\begin{equation}\label{motvilja}
    \left\| \frac{\partial\hat \K\psi}{\partial w^m \partial z^\gamma}(z,0)\right\|_{L^p(Z\cap \U')} \lesssim
    \sum_{\delta \leq \gamma} \left\| \frac{\partial \psi}{\partial \zeta^\delta \partial \tau^m}(\zeta,0)\right\|_{L^p(Z \cap \U)}.
\end{equation}
From \eqref{motvilja} and \eqref{tomat2}  it follows that there is a constant $C_p$ such that
\begin{equation}\label{eq:hatKestimate}
\|\hat\K\psi\|_{L^p(\hat X\cap \U')}\le C_p \|\psi\|_{L^p(\hat X\cap \U)}, \quad 1\le p\le  \infty.
\end{equation}

\begin{ex} \label{ex:product}
Let $X=\C^n\times X_0$ be an analytic space which is the product of $\C^n$ with a space $X_0$ whose
underlying reduced space is a single point $0 \in \C^\kappa$, i.e., such that $\Ok_X = \Ok_{\C^n_\zeta\times\C^\kappa_\tau}/\J$, where $\J = \pi^* \J_0$,
and $\J_0 \subset \Ok_{\C^\kappa_\tau}$ is an ideal such that $Z(\J_0) = 0$ and $\pi$ is the projection $\pi(\zeta,\tau) = \tau$.
Note in particular that this includes the basic examples $\hat{X}$ as in Example~\ref{simple1}.

Since the operator $\hat{\K}$ maps $\tau^\alpha$ to $w^\alpha$, it maps $\J$ to $\J_w$, where $\J_w$ denotes the ideal
$\J$ in the $(z,w)$-coordinates. Furthermore, it maps $\bar{\tau}_k$ and $d\bar{\tau}_k$ to $0$, so it
descends to an operator $\hat{\K} : \E^{0,*+1}(\U\cap X) \to \E^{0,*}(\U'\cap X)$.
Note that one may choose $\gamma_1,\dots,\gamma_\rho$ in \eqref{tomat3} that only depend on $\tau$.
Thus, if $\psi \in \E^{0,*+1}(\U\cap X)$, then
\begin{equation} \label{eq:hatKgamma}
	\hat{\K}(\gamma_k(\tau) \psi) = \gamma_k(w) \hat{\K}\psi.
\end{equation}
By \eqref{tomat4}, \eqref{eq:hatKestimate} and \eqref{eq:hatKgamma}, it follows that
\begin{equation}\label{eq:hatKestimate2}
\|\hat\K\psi\|_{L^p(X\cap \U')}\le C_p \|\psi\|_{L^p(X\cap \U)}, \quad 1\le p\le  \infty.
\end{equation}
\end{ex}

We can now prove

\begin{prop}\label{mainhatx}
Let $X$ be a space of the form $\C^n \times X_0$ as in Example~\ref{ex:product}.
The operators $\K\colon \E^{0,q+1}(X\cap \U) \to \E^{0,q}(X\cap \U')$ extend to bounded operators
$L^p_{0,q+1}(X\cap \U)\to L^p_{0,q}(X\cap \U')$, $q\ge 0$, $1\le p<\infty$,  so that the Koppelman formula
\eqref{koppel} holds.

\smallskip
\noindent
The same statements hold for $C^{0,q}$ instead of $L^p_{0,q}$.
\end{prop}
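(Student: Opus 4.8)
The plan is to bootstrap from the explicit estimate \eqref{eq:hatKestimate2} for the operator $\hat\K$ on spaces of the form $\C^n\times X_0$. The key point established in Example~\ref{ex:product} is that for such product spaces, $\hat\K$ (constructed from the monomial Hefer form) maps $\E^{0,*+1}(X\cap\U)\to\E^{0,*}(X\cap\U')$, satisfies the Koppelman formula on the level of smooth forms, and is bounded in the $L^p$-norm. The first step is therefore to observe that the $L^p$-boundedness in \eqref{eq:hatKestimate2}, together with the density of $\E^{0,q+1}_X$ in $L^p_{0,q+1}(X\cap\U)$ by definition of the latter sheaf, immediately gives a bounded extension $\hat\K\colon L^p_{0,q+1}(X\cap\U)\to L^p_{0,q}(X\cap\U')$ for $1\le p<\infty$: one simply extends by continuity from the dense subspace. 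The sup-norm case is completely analogous, using the $p=\infty$ part of \eqref{eq:hatKestimate2} and density of $\E^{0,q+1}_X$ in $C^{0,q+1}$.

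**Next I would** verify that the Koppelman formula \eqref{koppel} persists after passing to the limit. For smooth $\phi$ the identity $\phi=\dbar\hat\K\phi+\hat\K(\dbar\phi)$ holds by the construction in Section~\ref{sec:Kdef}. Given $\phi\in L^p_{0,q+1}(X\cap\U)$ with $\dbar\phi=0$ (in the sense of currents), choose smooth $\phi_k\to\phi$ in $L^p$; then $\hat\K\phi_k\to\hat\K\phi$ in $L^p_{0,q}$ by boundedness, and hence in the current topology on $\Cu^{0,*}_X$ by \eqref{inka} and the remark that $L^p$-convergence implies current convergence. Since $\dbar$ is continuous on currents, $\dbar\hat\K\phi_k\to\dbar\hat\K\phi$. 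The only slightly delicate point is the term $\hat\K(\dbar\phi_k)$: here $\dbar\phi_k$ need not converge in $L^p$, but one can instead read the Koppelman formula at the level of the smooth approximants as $\phi_k-\hat\K(\dbar\phi_k)=\dbar\hat\K\phi_k$, and pass to the limit using that for a $\dbar$-closed limit one expects $\hat\K(\dbar\phi_k)\to 0$ in the current sense — this follows because $\dbar\hat\K\phi_k$ converges (to $\dbar\hat\K\phi$) and $\phi_k$ converges (to $\phi$), so $\hat\K(\dbar\phi_k)=\phi_k-\dbar\hat\K\phi_k$ converges to $\phi-\dbar\hat\K\phi$, and independently one checks this limit is $0$ using the dimension/support structure of the currents $R$ appearing in the kernel, exactly as in the proof of Proposition~\ref{algot}. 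Thus $\dbar\hat\K\phi=\phi$.

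**The main subtlety** is really just keeping the bookkeeping straight between the three operators $\K$, $\hat\K$, and the one appearing in the statement: the statement names the operator $\K$ (the intrinsic one from \eqref{kvadd}/\eqref{neptun}), whereas the explicit estimates were derived for $\hat\K$ built from the specific monomial Hefer form. For a space of the form $\C^n\times X_0$ with $\Ok_X=\Ok_\U/\I$ and $\I=\langle\tau^{M+\1}\rangle$ these coincide up to the identifications already made in Section~\ref{orm}, and for the slightly more general $\J=\pi^*\J_0$ of Example~\ref{ex:product} the passage from $\hat X$ to $X$ via the functions $\gamma_k(\tau)$ and relation \eqref{eq:hatKgamma} is exactly what gives \eqref{eq:hatKestimate2}; so one should phrase the proof as: \emph{take $\K:=\hat\K$ for this class of spaces}, which is legitimate since any two choices of Hefer form differ by a $\delta_\eta$-exact correction (as in Lemma~\ref{kotor}) that does not affect the current $\hat H^0_\kappa\hat R_\kappa$ modulo terms killed in the integral. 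With that identification fixed, the proof is the two-line continuity argument above plus the limiting Koppelman formula.

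**In summary**, the steps in order are: (i) record that $\E^{0,q+1}_X$ is dense in $L^p_{0,q+1}(X\cap\U)$ and in $C^{0,q+1}(X\cap\U)$; (ii) extend $\hat\K$ by continuity using \eqref{eq:hatKestimate2}; (iii) pass to the limit in the smooth Koppelman formula, using continuity of $\dbar$ on currents and the structure of the residue kernel to handle the $\hat\K(\dbar\,\cdot)$ term; (iv) note the identification of $\K$ with $\hat\K$ for this class of spaces. I expect step (iii) — specifically, justifying that the limiting formula still reads $\phi=\dbar\hat\K\phi+\hat\K(\dbar\phi)$ without extra boundary terms — to be the only part requiring genuine care, and it is handled exactly as in the proof of Proposition~\ref{algot}.
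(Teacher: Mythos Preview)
Your approach is essentially the paper's: extend $\hat\K$ by density and the a~priori estimate \eqref{eq:hatKestimate2}, then invoke Proposition~\ref{algot} for the Koppelman formula. Two points are worth tightening. First, your step~(iii) is an unnecessary detour: you restrict to $\dbar$-closed $\phi$ and argue about $\hat\K(\dbar\phi_k)\to 0$, but the full Koppelman formula $\phi=\dbar\K\phi+\K(\dbar\phi)$ for arbitrary $\phi\in L^p$ follows directly from Proposition~\ref{algot}, since $L^p$-convergence of $\phi_k\to\phi$ implies convergence in $\Cu^{0,*}_X$ (via \eqref{inka}), and the $L^p$-extension of $\K$ therefore agrees with the current extension already constructed there. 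Second, your step~(iv) invokes Lemma~\ref{kotor} to identify $\K$ with $\hat\K$, but that lemma compares Hefer morphisms for two \emph{different} resolutions and does not say that two Hefer choices for the same resolution give the same operator; the paper simply \emph{takes} $\K$ to be the operator $\hat\K$ built from the Koszul complex of $\I$ (which, as shown in Example~\ref{ex:product}, descends to $X$), so no comparison is needed at all.
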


In particular, if $\psi\in L^p_{0,q+1}(X\cap \U)$ and $\dbar\psi=0$, then
$\dbar \K\psi=\psi$ in $X\cap \U'$ by \eqref{koppel}.
Thus Theorem~\ref{main} holds for all $q$ when $X$ is of the form as in Example~\ref{ex:product}.

\begin{proof}
If $\psi \in L^p_{0,q+1}(X\cap \U)$, then by definition there is a sequence $\psi_k\in
\E^{0,q+1}(X\cap \U)$ such that $\|\psi-\psi_k\|_{L^p(X\cap \U)}\to 0$. It follows from
\eqref{eq:hatKestimate} that $\K\psi_k$ is a Cauchy sequence in $L^p_{0,q}(X\cap \U')$ and hence
has a limit $\K\psi$. Clearly this limit satisfies \eqref{eq:hatKestimate2}.
Moreover, it is in $\Cu^{0,q}(X\cap \U')$. Thus these extended operators satisfy
the Koppelman formula, see Proposition~\ref{algot}.
The statements about $C^{0,q}$ follow in exactly the same way.
\end{proof}

\begin{remark}
We use the intrinsic integral formulas on $\hat X\cap \U$ here for future reference. To obtain the theorem one can just as well
solve the $\dbar$-equation with relevant $L^p$-Sobolev norms in $X\cap \U$ for each coefficient in the expansion \eqref{simple0}.  However, this is naturally done by an integral
formula on $Z\cap \U$, and the required computations are basically the same.
\end{remark}

\medskip

We finish this section with an example showing that the spaces in Example~\ref{ex:product} may not necessarily
be written in the simple form as in Example~\ref{simple1} after a change of coordinates,
even if $\J$ is a complete intersection.

\begin{ex}
Let $\J$ be generated by $(w_1^3,w_1^2+w_2^3)$. Then we claim that one cannot find
local coordinates $\tau_1,\tau_2$ near $0$ such that $\J$ is generated by $(\tau_1^\ell,\tau_2^m)$.
Indeed, since the multiplicity of $\J$ is $9$, $\ell m = 9$. The assumptions imply that
\begin{equation*}
\left[\begin{array}{c} w_1^3 \\ w_1^2+w_2^3 \end{array}\right] =
 \left[ \begin{array}{cc} b_{11} & b_{12} \\ b_{21} & b_{22} \end{array}\right]
 \left[\begin{array}{c} \tau_1^\ell \\ \tau_2^m  \end{array}\right]  \text{ and }
 \tau_j = a_{j1} w_1 + a_{j2} w_2, \text{ for $j=1,2$,}
\end{equation*}
where the $a_{jk}$ and $b_{jk}$ are holomorphic. One may exclude the case $\ell=m=3$ since the above equations would imply that
$w_2^2$ belongs to the ideal generated by $(w_1,w_2)^3$. The case $\ell=1,m=9$ may be excluded as that would
imply that $\tau_1=c_1 w_1^3 + c_2 (w_1^2+w_2^3)$ for some holomorphic functions $c_j$, which would contradict the fact
that $\tau_1$ is part of a coordinate system near $0$.
\end{ex}

\section{$L^p$-estimates at Cohen-Macaulay points} \label{sect:MainProof}

Assume that we have a local embedding $X\to \U$ where $Z\cap \U$ is smooth and $X$
is Cohen-Macaulay. Moreover, assume that we have coordinates $(\zeta,\tau)$ in $\U$ such that
$Z=\{\tau_1=\dots=\tau_\kappa=0\}$, and a basis $\tau^{\alpha_\ell}$ for $\Ok_X$ over $\Ok_Z$.
We may also assume that we have a Hermitian resolution $(E,f)$ of $\Ok_X=\Ok_\U/\J$ of minimal length,
so that its associated residue current is $R=R_\kappa$.

In general, if $X$ is Cohen-Macaulay, and the underlying space $Z$ is smooth, it is not possible to choose coordinates so
that $X$ becomes a product space as in Example~\ref{ex:product}, even if the space is defined by a complete intersection.

\begin{ex}
	Let $\J \subset \Ok_{\C^3_{z,w_1,w_2}}$ be generated by $g=(w_1^2,zw_1+w_2^2)$, and $\Ok_X = \Ok/\J$.
	Then $Z(\J) = \{ w = 0 \}$, so $\J$ is a complete intersection ideal, and $X$ is Cohen-Macaulay.
	We claim that one cannot choose new local coordinates $(\zeta,\tau_1,\tau_2)$ near $0$ such that $\J = \pi^* \J_0$,
	where $\J_0 \subseteq \C^2_\tau$ is an ideal such that $Z(\J_0) = \{ \tau = 0 \}$ and $\pi(\zeta,\tau) = \tau$.

	Indeed, assume that there are such coordinates. First of all, from any set of generators of an ideal, one may
	select among them a minimal subset of generators, and the number is independent of the choice of generators.
	Thus, one may assume that $\J$ is generated by $f_1(\tau),f_2(\tau)$.
	Since $f$ and $g$ generate $\J$, there is an invertible matrix $A$ of holomorphic functions such that $f = Ag$ and $g = A^{-1}f$.
	Note that if $\mathfrak{m}$ is the maximal ideal of functions vanishing at $\{ z=w=0 \}$, then $g$ belongs to
	$\mathfrak{m} \J_Z$. Since $f = A^{-1}g$, the same must hold for $f$.
	Since $\{ \tau = 0 \} = \{ w = 0 \}$, one may write $\tau = B w$
	for some holomorphic matrix $B$. Note also that since $f$ only depends on $\tau$,
	$f=C\tau \mod \J_Z^2$ for some constant matrix $C$. Since $f$ belongs to $\mathfrak{m} \J_Z$,
	we must have that $C = 0$, i.e., $f = 0 \mod \J_Z^2$.
	Thus, also $g = 0 \mod \J_Z^2$, which yields a contradiction.
\end{ex}

Let us assume that we have coordinates $(\zeta,\tau)$ in $\U$
and choose a simple ideal
$\I$ as in Section~\ref{orm}, such that
$\I\subset \J$, and hence, as in Section~\ref{pnorm},  get the embedding
\begin{equation}\label{iota}
\iota\colon X\to \hat X,
\end{equation}
where $\Ok_{\hat X}=\Ok_\U/\I$.
Let $\V=X\cap \U$ and $\V'=X\cap \U'$ as before and let
$\hat \V=\hat X\cap \U$ and $\hat\V'=\hat X\cap \U'$.
Here is our principal result.

\begin{prop}\label{gurka0}
Let $\V$ and $\V'$ be as above and $\K$ as in Section~\ref{sec:Kdef}.

\smallskip
\noindent
(i) There are constants $C_p$, $1 \leq p \le \infty$, such that if $\phi$ is a smooth
$(0,1)$-form and  $\dbar \phi = 0$, then
    \begin{equation}\label{gurka1}
        \|\mathcal{K}\phi \|_{L^p(\V')} \le C_p \|\phi \|_{L^p(\V)}.
    \end{equation}

\smallskip
\noindent
(ii)   If $\phi$ is in $L^p_{0,1}(\V)$, $p<\infty$,
 and $\dbar\phi=0$. Then $\K\phi$ is in $L^p_{0,0}(\V')$,
$\dbar \K\phi=\phi$,  and \eqref{gurka1} holds. If $\phi\in C_{0,1}(\V)$ and
$\dbar\phi=0$, then $\K\in C_{0,0}(\V')$, $\dbar\K\phi=\phi$, and
$$
 \|\mathcal{K}\phi \|_{C(\V')} \le C_\infty \|\phi \|_{C(\V)}.
 $$
 \end{prop}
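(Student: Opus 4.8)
The plan is to deduce Proposition~\ref{gurka0} from the special case already established for the simple space $\hat X$ (Proposition~\ref{mainhatx} and the estimate \eqref{eq:hatKestimate}) via the embedding $\iota\colon X\to\hat X$ of \eqref{iota}, using the comparison results from \cite{LarComp} and from Lemma~\ref{kotor} to relate the constituents of the two Koppelman operators $\K$ (on $X$) and $\hat\K$ (on $\hat X$). Concretely, write $\hat R$, $\hat U$ for the residue and resolution currents of the Koszul complex $(\hat E,\hat f)$ resolving $\Ok_\U/\I$, and $R=R_\kappa$, $U$ for those of the minimal resolution $(E,f)$ of $\Ok_\U/\J$. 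Since $\I\subset\J$, there is a morphism of complexes $a\colon(\hat E,\hat f)\to(E,f)$ lifting the surjection $\Ok_\U/\I\to\Ok_\U/\J$. By \cite[Theorem~1.5]{Aext} (cf.~\eqref{tomat3}) one has $\mu_j=\gamma_j\hat\mu$, i.e., the generators of $\Homs(\Ok_\U/\J,\CH^Z_\U)$ are obtained from $\hat\mu$ by multiplication by holomorphic $\gamma_j$; in terms of currents this says that $R$ is, up to the $\gamma_j$'s, a component of $\hat R$ pushed through $a$. The comparison statement I would invoke from \cite{LarComp} is precisely that $\mathrm{(the\ relevant\ product)}\ H^0R$ differs from $\hat H^0\hat R$ by an exact term $\nabla_\eta(\text{something smooth})$ plus lower-order contributions coming from the forms $C^\ell_k$ of Lemma~\ref{kotor}; since such $\nabla_\eta$-exact terms do not affect the value of the integral operator against a $\dbar$-closed form, they drop out of $\K\phi$ modulo the operator $\hat\K$ composed with bounded algebraic operations.

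The key steps, in order. First, fix the data: coordinates $(\zeta,\tau)$, the basis $\tau^{\alpha_\ell}$, the ideal $\I=\langle\tau^{M+\1}\rangle$ with $\I\subset\J$, the embedding $\iota$, the morphism $a$, and Hefer families $H$ for $(E,f)$ and $\hat H$ for $(\hat E,\hat f)$ (the latter given explicitly in Section~\ref{orm} by contraction with $\wedge^{k-\ell}h$). Second, apply Lemma~\ref{kotor} to get holomorphic forms $C^\ell_k$ satisfying \eqref{grus1}--\eqref{grus3}; these encode the difference between $H a$ and $a\hat H$. Third, assemble the comparison of weights: show that $H^0 R$ and $a_0\hat H^0\hat R$ (suitably interpreted, using $R$ annihilated by $\J$ and $\hat R$ by $\I$, and the fact that $R\Phi=0\Leftrightarrow\Phi\in\J$) agree up to $\nabla_\eta$ of a current built from $C$, $U$ and $\hat U$ which is smooth in $\zeta$ away from the diagonal and integrable — this is the content one imports from \cite{LarComp}, adapted to the Hefer setting via Lemma~\ref{kotor}. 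Fourth, integrate against $g\w B\w\phi$ with $\dbar\phi=0$: the $\nabla_\eta$-exact piece integrates to zero (or to a term handled by the Koppelman formula \eqref{koppel} applied on $\hat X$), so $\mathcal{K}\phi$ is expressed as $\mathcal{K}\phi = \mathfrak{a}\,\hat\K(\mathfrak{b}\phi) + (\text{lower order})$, where $\mathfrak{a},\mathfrak{b}$ are multiplications by holomorphic (matrix-valued) functions — in particular, by the $\gamma_j$ from \eqref{tomat3} — and hence bounded on the relevant $L^p$ and $C$ spaces by \eqref{timesSmooth}, \eqref{tomat4}, and \eqref{eq:hatKestimate}. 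Fifth, compose: $\|\mathcal{K}\phi\|_{L^p(\V')}\lesssim\|\hat\K(\mathfrak{b}\phi)\|_{L^p(\hat\V')}\lesssim\|\mathfrak{b}\phi\|_{L^p(\hat\V)}\lesssim\|\phi\|_{L^p(\V)}$, using $\|\phi\|_{L^p(X\cap\U)}\sim\sum_j\|\gamma_j\phi\|_{L^p(\hat X\cap\U)}$ (the displayed equivalence just before Remark~\ref{heuristic}); the case $p=\infty$ (i.e.~$C^{0,*}$) is identical. This proves part (i). For part (ii): given $\phi\in L^p_{0,1}(\V)$ with $\dbar\phi=0$, take smooth $\phi_k\to\phi$ in $L^p_{0,1}(\V)$ (not necessarily $\dbar$-closed); by (i) applied to $\dbar$-closed approximants — or, more robustly, by the already-established extension of $\K$ to currents in Proposition~\ref{algot} together with the bound \eqref{gurka1} — $\K\phi_k$ is Cauchy in $L^p_{0,0}(\V')$, its limit $\K\phi$ lies in $L^p_{0,0}(\V')\subset\Cu^{0,0}(\V')$ by \eqref{inka}, and the Koppelman formula \eqref{koppel} passes to the limit, giving $\dbar\K\phi=\phi$; the $C_{0,1}$ case is the same.

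A subtlety to record: the estimate (i) is stated for $\dbar$-closed $\phi$ only, and $\dbar$-closedness is genuinely used to kill the $\nabla_\eta$-exact error term in the weight comparison; without it one would also pick up a term $\K(\dbar\phi)$, which is fine for proving (ii) by approximation but would not give a clean bound on $\K\phi$ itself. When passing from smooth approximants in (ii) I would therefore route through Proposition~\ref{algot} (extension of $\K$ to currents, with the Koppelman formula intact) rather than trying to approximate by $\dbar$-closed smooth forms, and apply the bound \eqref{gurka1} to $\K$ on the dense subspace. One more point to be careful about: in the comparison one must check that the forms $C^\ell_k$, and the resulting currents $C^\ell_k\w\hat U$, $C^\ell_k\w U$, genuinely produce, after contraction with $g\w B$ and push-forward, operators that are bounded on $L^p$ — this follows because, exactly as in the computation \eqref{plupp}--\eqref{motvilja}, after the $\tau$- and $w$-integrations everything reduces to convolution by the Bochner--Martinelli kernel $B(\zeta,z)$ (uniformly integrable) composed with multiplication by smooth forms and finitely many holomorphic $\zeta$-derivatives, and the number of derivatives is controlled by $|M-m|$, matching \eqref{tomat2}.

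The main obstacle I anticipate is the weight-comparison step (the third step above): making precise, with all the superstructure signs and the $\Hom(\hat E,E)$-bookkeeping of Section~\ref{poker}, that
$$
g'\ :=\ f_1(z)H^1U+H^0R \quad\text{and}\quad \hat g'\ :=\ \hat f_1(z)\hat H^1\hat U+\hat H^0\hat R
$$
are linked, through $a$ and the forms $C^\ell_k$ of Lemma~\ref{kotor}, by $a\,\hat g' = g'\,(\text{alg.}) + \nabla_\eta(\text{integrable current})$, so that in \eqref{basic2}--\eqref{kvadd} the operator $\K$ on $X$ is reduced to $\hat\K$ on $\hat X$ up to bounded algebraic operations. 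This is essentially a Hefer-form refinement of the comparison of Koppelman operators in \cite{LarComp}, and it is why Lemma~\ref{kotor} was proved; turning it into the explicit identity $\mathcal{K}\phi=\mathfrak{a}\,\hat\K(\mathfrak{b}\phi)+(\text{l.o.t.})$ with all lower-order terms seen to be $L^p$-bounded is the technical heart of the argument, and is presumably the reason the paper restricts to $(0,1)$-forms at this stage.
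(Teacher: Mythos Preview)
Your strategy---compare $\K$ to $\hat\K$ via the embedding $\iota$, the morphism $a$, and the forms $C^\ell_k$ of Lemma~\ref{kotor}---is exactly the paper's. But several steps are misdescribed or missing.

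The comparison of residue--Hefer products is cleaner than you suggest: from $R_\kappa=R_\kappa a_0=a_\kappa\hat R_\kappa$ (the comparison formula from \cite{LarComp}) and \eqref{grus3} together with $\hat f_\kappa\hat R_\kappa=0$, one gets directly
\[
H^0_\kappa R_\kappa=(\hat H^0_\kappa+\delta_\eta C^0_\kappa)\hat R_\kappa+f_1(z,w)C^1_\kappa\hat R_\kappa,
\]
with no $U,\hat U$ involved; the $f_1(z,w)$-term is killed by $\gamma(z,w)\hat\mu$ since $\gamma\J\subset\I$. The remaining error $\int g\wedge B\wedge(\delta_\eta C^0_\kappa)\hat R_\kappa\wedge\phi$ does \emph{not} simply integrate to zero against a $\dbar$-closed $\phi$. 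Instead, the paper proves an integration-by-parts lemma (using $\nabla_\eta B=1$ off the diagonal, Stokes, and a dimension-principle argument to kill the boundary term at the diagonal) that converts $\int g\wedge B\wedge(\delta_\eta A)\hat R_\kappa\wedge\psi$ into three pieces: one with \emph{no} $B$-factor (kernel is just $g$, hence smooth), one involving $\dbar\psi$, and one of the form $\dbar_{z,w}(\cdots)$. For a $\dbar$-closed $(0,1)$-form the second vanishes and the third vanishes for degree reasons---this is precisely where the $(0,1)$ restriction enters. Your claim that the lower-order terms ``reduce to convolution by $B(\zeta,z)$'' is therefore backwards: the surviving error terms are bounded precisely \emph{because} they contain no $B$.

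You also omit a necessary ingredient: $\gamma\K\phi$ has $\gamma$ at the output point $(z,w)$, while $\hat\K(\gamma\phi)$ has $\gamma$ at the input $(\zeta,\tau)$. One needs a holomorphic $1$-form $\Gamma$ with $\delta_\eta\Gamma=\gamma(\zeta,\tau)-\gamma(z,w)$ to transfer it; this produces a second $\delta_\eta$-term, to which the same integration by parts is applied. Only then does one arrive at $\gamma\K\phi=\hat\K(\gamma\phi)+(\text{two terms with smooth kernels})$, and \eqref{eq:hatKestimate} together with \eqref{tomat4} gives part (i).

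For part (ii) your argument has a genuine gap. You cannot conclude that $\K\phi_k$ is Cauchy in $L^p$ from \eqref{gurka1}, because \eqref{gurka1} is established only for $\dbar$-closed forms and the smooth approximants $\phi_k$ are not $\dbar$-closed; Proposition~\ref{algot} gives convergence only at the current level. The paper's resolution is that the analysis in (i) actually yields, for \emph{any} smooth $(0,1)$-form, a decomposition $\gamma\K\phi_k=G\Phi_k+G'(\dbar\Phi_k)$, where $G$ (the main term $\hat\K(\gamma\,\cdot\,)$ plus the two smooth-kernel errors) satisfies an \emph{unconditional} a~priori estimate $\|G\tilde\Phi\|_{L^p(\hat\V')}\le C_p\|\tilde\phi\|_{L^p(\V)}$, while $G'(\dbar\Phi_k)\to 0$ merely as currents (since $\dbar\hat\phi_{k,\ell}\to 0$). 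Combining this with $\gamma\K\phi_k\to\gamma\K\phi$ as currents (Proposition~\ref{algot}) identifies $\gamma\K\phi=G\Phi\in L^p(\hat\V')$, and the bound follows.
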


Clearly Theorem~\ref{main} follows from this proposition.
The rest of this section is devoted to its proof.

\begin{proof}
Choose an embedding \eqref{iota} as above.
Since the proposition is local we can assume that we have a basis $\tau^{\alpha_\ell}$ in $\U$.
Let $\phi$ be a smooth $(0,*)$-form in $\V$.
As in Section~\ref{orm}, let $(\hat E,\hat{f})$ be the Koszul complex of $\I=\langle \tau^{M+\1}\rangle$ in $\U$.
Let us choose a morphism  $a \colon (\hat E, \hat f) \to (E,f)$ of complexes that extends
the natural surjection $\Ok_\U/\I \to \Ok_\U/\J$ and such that
$a_0$ is the identity morphism
$ \hat{E}_0 \simeq E_0$,
see, e.g., \cite[Proposition~3.1]{LarComp}.
By \eqref{tomat4}, we are to estimate the $L^p(\hat \V')$-norm of
$$
\gamma\K\phi=\gamma(z,w)\int_{\zeta,\tau} g\wedge B \wedge H^0_\kappa R_\kappa \wedge \phi,
$$
where $\gamma$ is any of the functions in \eqref{tomat3}.
(By the way, one can choose $\gamma_j$ as the components of $a_\kappa$, cf.~\cite[Example~6.9]{AL}).

Since $\gamma\K\phi$ is to be considered as an element in $\E^{0,*}(\hat\V')$, it is
determined by  $\hat\mu\w \gamma \K\phi$, where
$$
\hat\mu(z,w)=\dbar\frac{dw}{w^{M+\1}}\w dz.
$$
since $\hat\mu$ is a generator for $\Homs(\Ok_\U/\I, \CH_\U^Z)$ in $\U$,
cf.~Section~\ref{sec:Kdef}.

To $\phi$ we associate the representative
$\Phi = \sum \hat{\phi}_\ell(\zeta) \tau^{\alpha_\ell}$ in $\E^{0,*}(\U)$, where $\hat{\phi}_\ell$ are in $\E^{0,*}(Z\cap\U)$, as in \eqref{skata1}.

\begin{lma}\label{palett}
We have that
\begin{equation}\label{eq:palett}
    \hat\mu\w\gamma\K\phi= \hat\mu\w \gamma\int_{\zeta,\tau}  g\wedge B \wedge (\hat H_\kappa^0 + \delta_\eta C_\kappa^0)\hat R_\kappa\wedge
    \Phi.
\end{equation}
\end{lma}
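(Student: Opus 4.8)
The plan is to compare the two Koppelman-type integrands built from the Hefer morphisms $H$ for $(E,f)$ and $\hat H$ for the Koszul complex $(\hat E,\hat f)$, using the comparison forms $C^\ell_k$ from Lemma~\ref{kotor} applied to the morphism $a\colon(\hat E,\hat f)\to(E,f)$. The key input is the relation between the residue currents: since $a$ is a morphism of complexes lifting $\Ok_\U/\I\to\Ok_\U/\J$, one has $a_\kappa\hat R=R$ (more precisely $R=a R$ in the appropriate sense, cf.\ \cite{LarComp}). Hence $H^0_\kappa R_\kappa$ and $(\hat H^0_\kappa+\delta_\eta C^0_\kappa)\hat R_\kappa$ should differ by something that is killed after wedging with $g\w B$, integrating, and then wedging with $\hat\mu(z,w)$. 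So the first step is to write down, from \eqref{grus2}--\eqref{grus3} with $\ell=0$, $k=\kappa$, the identity
$$
\delta_\eta C^0_\kappa = H^0_\kappa a_\kappa(\zeta) - a_0(z)\hat H^0_\kappa - C^0_{\kappa-1}\hat f_\kappa(\zeta) - f_1(z) C^1_\kappa ,
$$
and use $a_0=I$, so that $(\hat H^0_\kappa+\delta_\eta C^0_\kappa)\hat R_\kappa = H^0_\kappa a_\kappa(\zeta)\hat R_\kappa - C^0_{\kappa-1}\hat f_\kappa(\zeta)\hat R_\kappa - f_1(z)C^1_\kappa\hat R_\kappa$.

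The second step is to discard the two error terms. For $C^0_{\kappa-1}\hat f_\kappa(\zeta)\hat R_\kappa$: by \eqref{eq:fR} applied to the Koszul complex, $\hat f_\kappa \hat R_\kappa=0$, so this term vanishes identically. For $f_1(z)C^1_\kappa\hat R_\kappa$: this carries a holomorphic factor $f_1(z)$, i.e.\ a generator of $\J$ evaluated in the output variable $(z,w)$; since $\hat\mu$ is (after the $\tau$-integration and the expansion as in the proof of Proposition~\ref{algot}) effectively a section of $\Homs(\Ok_\U/\I,\CH^Z_\U)$ but we are testing $\gamma\K\phi$ as an element of $\E^{0,*}(\hat\V')$ against $\hat\mu$, and $\gamma$ is precisely the factor making $\gamma\hat\mu=\mu$ a generator of $\Homs(\Ok_\U/\J,\CH^Z_\U)$ which is annihilated by $\J$ — so $\hat\mu\w\gamma\,f_1(z)(\cdots)=\mu\,f_1(z)\w(\cdots)=0$ by \eqref{duality1}. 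This is the step where the role of $\gamma$ and the passage from $\hat X$ to $X$ is really used. Finally, the term $\hat\mu\w\gamma\,H^0_\kappa a_\kappa(\zeta)\hat R_\kappa$ must be identified with $\hat\mu\w\gamma\,H^0_\kappa R_\kappa$, wedged with $\Phi$: this uses $a_\kappa(\zeta)\hat R_\kappa = R_\kappa$ together with the fact that $\Phi$ (a representative in $\E^{0,*}(\U)$ of $\phi$) and the intrinsic form $\phi$ give the same current against $R_\kappa$, since $R_\kappa$ annihilates $\J$, $\bar\J_Z$, $d\bar\J_Z$. Combining, $\hat\mu\w\gamma\K\phi = \hat\mu\w\gamma\int g\w B\w H^0_\kappa R_\kappa\w\phi$ on the one hand, and equals the right-hand side of \eqref{eq:palett} on the other.

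The main obstacle is bookkeeping rather than conceptual: one must be careful that all the identities ($\hat f_\kappa\hat R_\kappa=0$, $a_\kappa\hat R=R$, annihilation of $\hat\mu$ and $\mu$ by the relevant ideals and conjugate ideals, the regularization $B^\epsilon$ needed to make $g\w B\w(\cdots)\w\Phi$ a legitimate \pmm current, cf.\ Section~\ref{sec:Kdef}) interact correctly with the order of operations — wedging with $\hat\mu(z,w)$, pushing forward in $(\zeta,\tau)$, and taking the $\epsilon\to 0$ limit. In particular one should run the argument at the level of $\mu(z,w)\w g\w B^\epsilon\w(\cdots)\w\Phi$ exactly as in the proof of Proposition~\ref{algot}, where only the $(\zeta,z)$-dependent parts of $g\w B$ survive and all $\bar\tau,\bar w$ terms drop out, and only pass to the limit at the end; the degree/superstructure signs from Section~\ref{poker} in $\delta_\eta C^0_\kappa$ must be tracked so that \eqref{grus2}--\eqref{grus3} are applied with the correct signs. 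Once the identity \eqref{eq:palett} is established at this formal-but-rigorous level, letting $\epsilon\to0$ gives the lemma.
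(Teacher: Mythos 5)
Your proposal is correct and follows essentially the same route as the paper's proof: apply Lemma~\ref{kotor} with $\ell=0$, $k=\kappa$, use $a_0=I$, kill the term $C^0_{\kappa-1}\hat f_\kappa\hat R_\kappa$ via \eqref{eq:fR}, identify $a_\kappa\hat R_\kappa$ with $R_\kappa$ by the comparison formula of \cite{LarComp}, and discard $f_1(z,w)C^1_\kappa\hat R_\kappa$ because $\gamma(z,w)f_1(z,w)\hat\mu=0$ (the paper phrases this as $\gamma\J\subseteq\I$ together with $\I\hat\mu=0$, which is the same fact you invoke via $\gamma\hat\mu=\mu$ being annihilated by $\J$), all carried out at the level of the $\chi_\epsilon$-regularized integrals before letting $\epsilon\to0$. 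No gaps.
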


\begin{proof}
Recall from Section~\ref{sec:Kdef} that  $ \hat\mu\w\gamma\K\phi$ is defined as the limit of
    \begin{equation}\label{afrika}
       \hat\mu\w\gamma \int_{\zeta,\tau}  \chi_\epsilon g\wedge B \wedge H^0_\kappa R_\kappa \wedge \Phi,
    \end{equation}
    where $\chi$ is a cut-off function and $\chi_\epsilon = \chi(|(\zeta,\tau)-(z,w)|^2/\epsilon)$.
    By \cite[Theorem~4.1]{LarComp}, $R_\kappa a_0 = a_\kappa \hat{R}_\kappa$.
    Using Lemma~\ref{kotor}, the fact that $a_0$ is the identity, and that $\hat{f}_\kappa \hat{R}_\kappa = 0$ by \eqref{eq:fR}, we get
    \begin{equation} \label{eq:HR}
        H^0_\kappa R_\kappa = H^0_\kappa a_\kappa \hat{R}_\kappa = (\hat{H}^0_\kappa + \delta_\eta C^0_\kappa) \hat{R}_\kappa + f_1(z,w) C^1_\kappa \hat{R}_\kappa.
    \end{equation}
    Since $\gamma \J \subseteq \I$ and $\hat \mu$ is annihilated by $\I$
    we have  that $\gamma(z) f_1(z,w) \hat\mu = 0$ so by \eqref{eq:HR},  \eqref{afrika} is equal to
    \begin{equation*}
\hat\mu\w\gamma\int_{\zeta,\tau}  \chi_\epsilon g\wedge B \wedge (\hat{H}^0_\kappa  + \delta_\eta C^0_\kappa) \hat{R}_\kappa \wedge \Phi.
    \end{equation*}
    Taking the limit as $\epsilon \to 0$, we obtain \eqref{eq:palett}.
\end{proof}

Let us choose  a holomorphic $1$-form $\Gamma$ in $\U$ such that
\begin{equation}\label{gammatrams}
\delta_\eta \Gamma = \gamma(\zeta,\tau)-\gamma(z,w).
\end{equation}
From \eqref{eq:palett} and  \eqref{gammatrams} we get 
\begin{align*}
\hat\mu\w\gamma\K\phi=
\hat\mu\w \int_{\zeta,\tau} g\wedge B \wedge (\hat H_\kappa^0+\delta_\eta C_\kappa^0)\hat R_\kappa \wedge \gamma \phi \\
+\hat\mu\w\int_{\zeta,\tau} g\wedge B \wedge (\hat H_\kappa^0+\delta_\eta C_\kappa^0)\hat R_\kappa\wedge \delta_\eta \Gamma \Phi
=:
\hat\mu \w T_1\phi+ \hat\mu\w T_2\phi.
\end{align*}
Notice that we can write $\phi$ rather than $\Phi$ in $\hat\mu\w T_1\phi$, since
$\hat R_\kappa \gamma$ annihilates $\J$.
Now $T_1\phi=T_{11}\phi+T_{12}\phi$,
where
$$
T_{11}\phi=\int_{\zeta,\tau} g\wedge B \wedge \hat H_\kappa^0\hat R_\kappa \wedge \gamma \phi
$$
and
$$
T_{12}\phi= \int_{\zeta,\tau} g\wedge B \wedge (\delta_\eta C_\kappa^0) \hat R_\kappa )\wedge \gamma\phi.
$$

\begin{lma} \label{lma:intByParts}
    Let $A$ be a holomorphic $(\kappa+1,0)$-form in $d\zeta,d\tau$, $\psi = \psi(\zeta,\tau)$ a smooth  $(0,*)$-form on $\U$.
        Then
    \begin{align*}
        \hat\mu\w \int_{\zeta,\tau} g\wedge B \wedge (\delta_\eta A) \hat{R}_\kappa \wedge \psi
        = \hat\mu\w\int_{\zeta,\tau} g \wedge A \hat{R}_\kappa\wedge \psi  \\
        - \hat\mu\w\int_{\zeta,\tau} g\wedge B \wedge A\hat{R}_\kappa \wedge \dbar \psi
        -\hat\mu\w\dbar_{z,w} \int_{\zeta,\tau} g\wedge B \wedge A \hat{R}_\kappa \wedge \psi .
    \end{align*}
\end{lma}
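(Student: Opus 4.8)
The identity is an integration-by-parts formula, obtained in the same way as the Koppelman formula \eqref{koppel} itself: one applies $\nabla_\eta=\delta_\eta-\dbar$ to $g\w B\w A\hat R_\kappa\w\psi$, pushes forward in the $(\zeta,\tau)$-variables, and wedges with the generator $\hat\mu$. The one genuinely new algebraic ingredient is that, since $\hat X$ is a complete intersection, its residue current reduces to the single $\dbar$-closed component $\hat R_\kappa$; moreover $\delta_\eta\hat R_\kappa=0$, because $\hat R_\kappa$ is a $(0,\kappa)$-current carrying no holomorphic differentials and no dependence on $(z,w)$. As $A$ is holomorphic, this gives $\dbar(A\hat R_\kappa)=0$ and $\delta_\eta(A\hat R_\kappa)=(\delta_\eta A)\hat R_\kappa$, so that
$$
(\delta_\eta A)\hat R_\kappa=\nabla_\eta(A\hat R_\kappa).
$$

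The plan is first to carry out the computation with the regularized kernel, replacing the integrand by $\chi_\epsilon\, g\w B\w A\hat R_\kappa\w\psi$ with $\chi_\epsilon$ as in the proof of Lemma~\ref{palett}, so that the product of $B$ with $\hat R_\kappa$ is legitimate and the limit defines the operators as in Section~\ref{sec:Kdef}. Using that $\nabla_\eta$ is an odd derivation, that $\nabla_\eta g=0$, that $\nabla_\eta B=1$ on the support of $\chi_\epsilon$ (where $B$ is smooth), that $\nabla_\eta\psi=-\dbar\psi$ (since $\psi$ is of type $(0,*)$, whence $\delta_\eta\psi=0$), together with the identity above, the Leibniz rule yields, up to signs,
\begin{multline*}
\nabla_\eta\big(\chi_\epsilon\, g\w B\w A\hat R_\kappa\w\psi\big)=
\pm\,\chi_\epsilon\, g\w A\hat R_\kappa\w\psi
\pm\,\chi_\epsilon\, g\w B\w(\delta_\eta A)\hat R_\kappa\w\psi \\
\mp\,\chi_\epsilon\, g\w B\w A\hat R_\kappa\w\dbar\psi
-\dbar\chi_\epsilon\w g\w B\w A\hat R_\kappa\w\psi .
\end{multline*}
Solving for the $(\delta_\eta A)$-term, applying $\int_{\zeta,\tau}=\pi_*$ and wedging with $\hat\mu$, I would invoke the standard facts that $\pi_*$ annihilates $\delta_\eta$-exact currents for degree reasons, annihilates $\dbar_{\zeta,\tau}$-exact currents by Stokes (the weight $g$ has compact support in $\zeta$), and commutes with $\dbar_{z,w}$; thus $\pi_*\nabla_\eta(\,\cdot\,)=\pm\dbar_{z,w}\pi_*(\,\cdot\,)$.

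It then remains to let $\epsilon\to0$. After wedging with $\hat\mu$ and pushing forward, the term $\nabla_\eta(\chi_\epsilon\, g\w B\w A\hat R_\kappa\w\psi)$ becomes $\pm\dbar_{z,w}\int_{\zeta,\tau}\chi_\epsilon\, g\w B\w A\hat R_\kappa\w\psi$ and tends to $\pm\hat\mu\w\dbar_{z,w}\int_{\zeta,\tau}g\w B\w A\hat R_\kappa\w\psi$, while the two surviving terms carrying a $\chi_\epsilon$ factor tend to the corresponding unregularized integrals, exactly as in the proof of Proposition~\ref{algot}; together these produce the three terms on the right-hand side of the asserted identity. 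The only remaining term is $\dbar\chi_\epsilon\w g\w B\w A\hat R_\kappa\w\psi$, and this is the crux: since $\lim_{\epsilon\to0}\dbar\chi_\epsilon\w B=\pm[\Delta]'$, it tends to a multiple of $g\w[\Delta]'\w A\hat R_\kappa\w\psi$, which vanishes — the diagonal current $[\Delta]'$ already carries the full holomorphic differential $d\zeta_1\w\cdots\w d\zeta_n\w d\tau_1\w\cdots\w d\tau_\kappa$ in the integration variables, while $A$ is a holomorphic form of positive degree in $d\zeta,d\tau$, so $[\Delta]'\w A=0$. I expect this last step, together with the accompanying bookkeeping, to need the most care: one must justify the products of the almost semi-meromorphic current $B$ with the residue current $\hat R_\kappa$, and the passage to the limit, rigorously along the lines of Section~\ref{prel} and Proposition~\ref{algot}, and keep careful track of the signs coming from the form degrees; conceptually, however, it is the routine Koppelman integration by parts, the vanishing of the diagonal term being the one new observation.
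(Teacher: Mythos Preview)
Your overall architecture is exactly the paper's: regularize with $\chi_\epsilon$, expand $\nabla_\eta(\chi_\epsilon\, g\wedge B\wedge A\hat R_\kappa\wedge\psi)$ by Leibniz using $\nabla_\eta g=0$, $\chi_\epsilon\nabla_\eta B=\chi_\epsilon$, $\nabla_\eta A=\delta_\eta A$, $\nabla_\eta\hat R_\kappa=0$, apply Stokes to kill $\pi_*\nabla_\eta(\cdot)$, and pass to the limit. The three surviving terms are exactly the three on the right-hand side.

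The one place where your argument diverges from the paper --- and where there is a genuine gap --- is the disposal of the $\dbar\chi_\epsilon$-term. You write that $\lim_\epsilon \dbar\chi_\epsilon\wedge B=[\Delta]'$ and conclude that the limit is $g\wedge[\Delta]'\wedge A\hat R_\kappa\wedge\psi=0$ because $[\Delta]'$ already saturates the holomorphic degree in $(\zeta,\tau)$. The problem is that you cannot take the limit of $\dbar\chi_\epsilon\wedge B$ \emph{first} and then multiply by the current $\hat R_\kappa$: the symbol $[\Delta]'\wedge\hat R_\kappa$ is a product of two currents whose singular supports meet, and it has no a~priori meaning. What must be shown is that the pseudomeromorphic current $\lim_\epsilon \hat\mu\wedge\dbar\chi_\epsilon\wedge B_k\wedge\hat R_\kappa$ vanishes for each $k\le n-1$, and your degree heuristic does not establish this.

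The paper handles exactly this point with the dimension principle (Proposition~\ref{prop:dim}). One observes that the relevant integrand picks out $(g\wedge B)_{n-1}=\sum_{k\le n-1} g_{n-1-k}\wedge B_k$, so only $B_k$ with $k\le n-1$ appear. The limit $\lim_\epsilon \hat\mu\wedge\dbar\chi_\epsilon\wedge B_k\wedge\hat R_\kappa$ exists as a pseudomeromorphic current of bidegree $(*,k+2\kappa)$, cf.~\eqref{eq:PMlimits}, and its support lies in $\Delta\cap(Z\times Z)$, which has codimension $n+2\kappa$. Since $k+2\kappa<n+2\kappa$, the dimension principle forces it to vanish. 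This is the rigorous replacement for your degree argument; once you insert it, the rest of your proof is correct and matches the paper's.
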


\begin{proof}
    As in the proof of Lemma~\ref{palett},
    \begin{align*}
       \hat\mu\w  \int_{\zeta,\tau} g\wedge B \wedge (\delta_\eta A) \hat{R}_\kappa \wedge \psi
        = \lim_{\epsilon \to 0} \hat\mu\w \int_{\zeta,\tau} \chi_\epsilon g\wedge B \wedge (\delta_\eta A) \hat{R}_\kappa \wedge \psi.
    \end{align*}
    Let $(\ )_{ k}$ denote the component of degree $k$ in $d\zeta,d\tau$. Then
     \begin{align} \label{eq:nablaStrom}
        \begin{gathered}
		(\nabla_\eta(\chi_\epsilon g\wedge B \wedge A  \hat{R}_\kappa \wedge \psi ))_{N} = \\
	-\dbar_{\zeta,\tau}\chi_\epsilon \wedge (g \wedge B)_{n-1} \wedge A \hat{R}_\kappa \wedge \psi  +
        \chi_\epsilon g_{n-1} \wedge A  \hat{R}_\kappa \wedge \psi  \\ -
        \chi_\epsilon (g\wedge B)_{n} \wedge \delta_\eta A  \hat{R}_\kappa \wedge \psi   -
        \chi_\epsilon (g\wedge B)_{n-1} \wedge A \hat{R}_\kappa \wedge \dbar\psi  \\ -
	\dbar_{z,w}\big( \chi_\epsilon (g\wedge B)_{n-1} A \hat{R}_\kappa \wedge \psi\big),
        \end{gathered}
    \end{align}
    where we have used that $\kappa+n=N$,
 $\nabla_\eta g = 0$ since $g$ is a weight, $\chi_\epsilon \nabla_\eta B = \chi_\epsilon$,
    $\nabla_\eta A = \delta_\eta A$ since $A$ is holomorphic, $\hat R_\kappa$ is $\dbar$-closed $(0,\kappa)$-current
    so that
    $\nabla_\eta \hat R_\kappa = 0$, and finally that $g$, $B$ and $A$ are the only terms containing differentials in $d\zeta,d\tau$, and $A$ has degree     $\kappa+1$ in $d\zeta,d\tau$.

\smallskip
    We  claim that
    \begin{equation} \label{eq:claimTendsToZero}
        \lim_{\epsilon \to 0}
        \hat\mu\w \dbar\chi_\epsilon \wedge (g \wedge B)_{n-1} \wedge  A \hat{R}_\kappa \wedge \psi  = 0.
    \end{equation}
    In fact, let us write $B = \sum B_k$. Since $B$ has only holomorphic differentials in $d\zeta,d\tau$,
 $B_k$ has bidegree $(k,k-1)$, and so
       \begin{equation*}
        (g \wedge B)_{n-1} \wedge  A = \sum_{k \leq n-1} g_{n-k-1} \wedge B_k \wedge  A.
    \end{equation*}
    In particular, it suffices to show that
    \begin{equation*}
        \lim_{\epsilon \to 0} \hat\mu\w \dbar \chi_\epsilon \wedge B_k \wedge \hat{R}_\kappa  = 0
    \end{equation*}
    for $k \leq n-1$.
    The limit of such a term on the left-hand side is a pseudomeromorphic current of bidegree
    $(*,k+2\kappa)$, see the comment after \eqref{eq:PMlimits}. Since the support of $\dbar\chi_\epsilon$ tends to $\Delta$, the limits have support on $\Delta \cap (Z\times Z) \cong Z \cap \{ pt \}$, which has codimension
    $\kappa+(n+\kappa)=n+2\kappa$. By the dimension principle, Proposition~\ref{prop:dim}, therefore
    the limit of each such term is $0$ since $k+2\kappa < n+2\kappa$. Thus the claim holds.

\smallskip
  The lemma follows from the claim by applying $\hat\mu\w\int_{\zeta,\tau}$ to
   \eqref{eq:nablaStrom} and letting $\epsilon \to 0$ since
   $$
    -(\nabla_\eta( \chi_\epsilon g\wedge B \wedge  A \hat{R}_\kappa \wedge \psi \wedge \hat\mu))_{N}=
    \dbar( \chi_\epsilon g\wedge B \wedge  A \hat{R}_\kappa \wedge \psi \wedge \hat\mu)_{N}=
    d( \chi_\epsilon g\wedge B \wedge  A \hat{R}_\kappa \wedge \psi \wedge \hat\mu)_{N}.
    $$
   so that, by Stokes' theorem,
  \begin{equation*}
        \hat\mu\w \int_{\zeta,\tau} (\nabla_\eta( \chi_\epsilon g\wedge B \wedge  A \hat{R}_\kappa \wedge \psi \wedge \hat\mu))_{N} = 0.
  \end{equation*}

  \end{proof}

\bigskip

Using Lemma~\ref{lma:intByParts} with $A=C_\kappa^0$,
we get that $T_{12} \phi= T_{121}\phi+T_{122}\phi+T_{123}\phi$, where
$$
T_{121}\phi= \int_{\zeta,\tau} g_{n-1} \wedge  C_\kappa^0\hat R_\kappa \w  \gamma\phi,
$$
$$
T_{122}\phi=
- \int_{\zeta,\tau} (g\wedge B)_{n-1} \wedge C_\kappa^0\hat R_\kappa \wedge \gamma\dbar \phi
$$
and
$$
T_{123}\phi=\pm \dbar_{z,w}
\int_{\zeta,\tau} (g\wedge B)_{n-1} \wedge C_\kappa^0\hat R_\kappa \wedge \gamma\phi.
$$
Note that since $\hat\mu f_1(z) = 0$ and $\hat{f}_\kappa \hat{R} = 0$, we get that
\begin{align*}
	T_2 \phi = \hat\mu\w\int_{\zeta,\tau} g\wedge B \wedge \delta_\eta ((\hat H_\kappa^0+\delta_\eta C_\kappa^0) \Gamma)\hat R_\kappa\wedge \Phi.
\end{align*}
Thus, by applying Lemma~\ref{lma:intByParts} with $A=(\hat H^0_\kappa+\delta_\eta C^0_\kappa) \wedge \Gamma$, we get that
$T_2\phi = T_{21}\Phi + T_{22}\Phi + T_{23}\Phi$, where
$$
T_{21}\Phi= \int_{\zeta,\tau} g_{n-1,*}\w (\hat H_\kappa^0+\delta_\eta C_\kappa^0)\hat R_\kappa\w \Gamma\w \Phi,
$$
$$
T_{22}\Phi=\int_{\zeta,\tau} g\w B\w (\hat H_\kappa^0+\delta_\eta C_\kappa^0)\hat R_\kappa\w \Gamma\w \dbar\Phi,
$$
and
$$
T_{23}\Phi=\pm\dbar_{z,w}\int_{\zeta,\tau} (g\w B)_{n-1} \w (\hat H_\kappa^0+\delta_\eta C_\kappa^0)\hat R_\kappa\w \Gamma\w \Phi.
$$

\smallskip
We can now prove (i).
If $\dbar\phi=0$, then clearly  $T_{22}\phi$ and $T_{122}\phi$ vanish. If $\phi$ has bidegree $(0,1)$,
then $T_{123}\phi$ and $T_{23}\phi$ vanish for degree reasons since $(g\w B)_{n-1}$ and $\phi$ are the only terms containing $d\bar{\zeta},d\bar{\tau}$.
Therefore,
\begin{equation}\label{part1}
 \gamma \K\phi = T_{11}\phi + T_{121}\phi+T_{21}\Phi.
\end{equation}
The main term $T_{11}\phi$ is precisely $\hat{\K} (\gamma \phi)$,
so from \eqref{eq:hatKestimate} and \eqref{tomat4},
$$
\|T_{11}\phi\|_{L^p(\hat\V')}\le C_p \|\gamma\phi\|_{L^p(\hat \V)}
\le C'_p \|\phi\|_{L^p( \V)}
$$
as desired.
The remaining two terms $T_{121}\phi$ and $T_{21}\Phi$ in \eqref{part1}
are simpler since their integrands do not contain the factor $B$.
We now use that  $\Phi$ has the form \eqref{skata1}
and $\hat R_\kappa$ only depends on $\tau$. Integrating with respect to $\tau$ therefore does not give rise to
any derivates with respect to $\zeta$.
Thus, the $L^p(\V')$-norms of these two terms are bounded by integrals of the form
\begin{equation*}
    \sum_{\ell=0}^{\nu-1}\left(\int_z \left|\int_\zeta |\xi_\ell(\zeta,z) \hat{\phi}_\ell(\zeta)|\right|^p\right)^{1/p},
\end{equation*}
where $\xi_j(\zeta,z)$ are smooth forms with compact support in $Z\cap \U$.  It follows from \eqref{citron1} and \eqref{citron2}
that these terms are $\lesssim \|\phi\|_{L^p(\V)}$.
Thus part (i) is proved.

\smallskip
We now consider part (ii), so assume that $\phi\in L^p_{0,1}(\V)$, $p<\infty$ and  $\dbar\phi=0$.
We cannot deduce (ii) directly from (i).  The problem is that we do not know whether it is possible to
regularize $\phi$ so that the smooth approximands are $\dbar$-closed, cf.~Remarks~\ref{ugn} and \ref{heuristic}.

By Proposition~\ref{algot} we know that
$\dbar\K\phi=\phi$ in the current sense. We must show that actually
$\K\phi$ is in $L^p(\V')$ and that \eqref{gurka1} holds.
Let $\phi_k$ be a sequence of smooth $(0,1)$-forms in $\V$ that converge to $\phi$ in
$L^p(\V)$ and let $\Phi_k$ denote the representatives in $\U$ given by \eqref{skata1}.
Since $T_{123}\phi_k$ and $T_{23}\phi_k$ vanish for degree reasons,
we have
\begin{equation}\label{kulram1}
\gamma\K\phi_k= G\Phi_k+G'(\dbar \Phi_k),
\end{equation}
where
$$
G \Phi_k=  T_{11}\phi_k+ T_{121}\phi_k +T_{21}\Phi_k, \quad
G'_\gamma(\dbar\Phi_k)=T_{122}\Phi_k+T_{22}\Phi_k.
$$
The proof of part (i) gives the a~priori estimate
$$
\|G\tilde \Phi\|_{L^p(\hat\V')}\le C_p \|\tilde \phi\|_{L^p(\V)}
$$
for $\tilde \phi$ in $\E^{0,1}(\V)$.
We conclude that $G\Phi_k$ has a limit $G\Phi$ in $L^p(\hat\V')$
and that
\begin{equation}\label{tennis}
\|G\Phi\|_{L^p(\hat\V')}\le C_p \|\phi\|_{L^p(\V)}
\end{equation}
Next we claim that $\hat\mu\w G'(\dbar\Phi_k)\to 0$.  In fact,
$$
\dbar\Phi_k=\sum_\ell (\dbar\hat\phi_{k,\ell}) \tau^{\alpha_\ell},
$$
so arguing as in the proof of Proposition~\ref{algot} the claim follows, since $\dbar\hat\phi_{k,\ell}\to 0$ for each $\ell$.

Since $\gamma \K\phi_k\to \gamma \K\phi$ in $\Cu^{0,1}(\V')$, it follows from
\eqref{kulram1} that $\gamma \K\phi=G\Phi$.
Thus $\gamma\K\phi$ is indeed in $L^p(\hat\V')$ and, cf.~\eqref{tennis},
$$
\|\gamma\K\phi\|_{L^p(\hat\V')}\le C_p \|\phi\|_{L^p(\V)}.
$$
Since this estimate holds for any $\gamma=\gamma_j$ we get, cf.~\eqref{tomat4},
$$
\|\K\phi\|_{L^p(\V')}\sim \sum_{j=1}^\rho \|\gamma_j\K\phi\|_{L^p(\hat\V')}\le C_p \|\phi\|_{L^p(\V)}.
$$
Thus part (ii) holds for $p<\infty$.
The case $p=\infty$ follows in precisely the same way. Thus the proposition is proved.
\end{proof}

Note that if we drop the assumption that $\phi$ be a $(0,1)$-form, then the terms $T_{123}\phi$ and $T_{23}\phi$ no longer vanish,
and it is not clear to us how to estimate them.
It is also not clear to us whether the estimate \eqref{gurka1} holds if $\phi$ is not $\dbar$-closed.

In the case of product spaces as in Example~\ref{ex:product}, then one may choose $C^0_\kappa$, $\hat H^0_\kappa$ and $\Gamma$ such that they only
contain holomorphic differentials $d\tau$. In that case, all terms but $T_{11}\phi$ vanish for any $(0,q)$-form $\phi$,
since all the other terms involve integrals of forms of degree $\kappa+1$ in $d\tau$, which thus vanish for degree reasons. Thus, one
in fact has that $\gamma \K\phi = T_{11}\phi = \hat{\K}(\gamma \phi)$, cf.\ the proof of Proposition~\ref{mainhatx}.

\section{An example where $X$ is not Cohen-Macaulay}\label{nonch}

In this section we consider an example where $Z=X_{red}$ is smooth but $X$ is not Cohen-Macaulay.
Since $X_{red}$ is smooth, it is still possible to define $L^p_{loc}(X)$ as in Section~\ref{lpspace}.
However, our solutions $\K\phi$ are not smooth at the non-Cohen-Macaulay point.
In view of works on $L^p$-estimates on non-smooth reduced spaces it therefore might be
natural to define $L^p(X)$ as the completion of the space of
smooth forms with support on the Cohen-Macaulay-part of $X$.
In any case we do not pursue this question here, but just discuss an
a~priori estimate of the solutions.

\smallskip

Let $\Omega=\C^4_{z,w}$ and
$\J = \J(w_1^2,w_1 w_2,w_2^2,z_2 w_1 - z_1 w_2)$,
and let $X$ have the structure sheaf $\Ok_\Omega/\J$. Then $Z=\C^2_z$,  and $X$ has the single
non-Cohen-Macaulay point $(0,0)$. Outside that point $X$ is locally of the form discussed in Section~\ref{orm} so
that we have local $L^p$-estimates for $\dbar$ for all $(0,*)$-forms there.
Thus the crucial question is what happens at $(0,0)$.
The structure sheaf $\Ok_X$ has the  free resolution $(E,f)$
    \begin{equation*}
        0 \to \Ok_\Omega \stackrel{f_3}{\longrightarrow} \Ok_\Omega^4 \stackrel{f_2}{\longrightarrow} \Ok_\Omega^4 \stackrel{f_1}{\longrightarrow}
        \Ok_\Omega \to \Ok_\Omega/\J \to 0,
    \end{equation*}
where
    \begin{eqnarray*}
        f_3 = \left[ \begin{array}{c} w_2 \\ -w_1 \\ z_2 \\ -z_1 \end{array} \right] \text{, }
        f_2 = \left[ \begin{array}{cccc} z_2 & 0 & -w_2 & 0 \\ -z_1 & z_2 & w_1 & -w_2 \\ 0 & -z_1 & 0 & w_1 \\ -w_1 & -w_2 & 0 & 0 \end{array} \right] \\
            \text{ and }
            f_1 = \left[ \begin{array}{cccc} w_1^2 & w_1 w_2 & w_2^2 & z_2 w_1 - z_1 w_2 \end{array} \right].
    \end{eqnarray*}
   We equip the vector spaces $E_k$ with the trivial metrics.
Consider also the Koszul complex  $(F,\delta_{\mathbf{w}^2})$ generated by
 $\mathbf{w}^2 := (w_1^2,w_2^2)$, which is a free resolution of $\Ok/ \I$,
 where $\I=\langle w_1^2,w_2^2\rangle$. If $\hat X$ has structure sheaf
 $\Ok_{\hat X}=   \Ok/ \I$ we thus have an embedding
 $\iota\colon X\to \hat X$.

    We  take the morphism of complexes $a : F_\bullet \to E_\bullet$ given by
    \begin{equation*}
        a_2 = \left[ \begin{array}{c} 0 \\ 0 \\ w_2 \\ w_1 \end{array} \right] \text{, }
        a_1 = \left[ \begin{array}{cc} 1 & 0 \\ 0 & 0 \\ 0 & 1 \\ 0 & 0 \end{array} \right]
            \text{ and }
            a_0 = \left[ \begin{array}{c} 1 \end{array} \right].
    \end{equation*}

Let $R$ and $\hat R$ be the residue associated with $(E,f)$ and $(F,\delta_{\mathbf{w}^2)}$, respectively.
It is well-known, see, e.g., \cite{AL}, that $\hat R=\hat R_2$ is equal to the
 Coleff-Herrera product
 $$
 \mu_0 = \dbar(1/w_1^2) \wedge \dbar(1/w_2^2).
$$

\subsection{The current $R$}

In \cite[Example~6.9]{AL} we found that
    \begin{equation*}
        \mu_1 = \dbar \frac{1}{w_1} \wedge \dbar \frac{1}{w_2}
         \text{ and } \mu_2 = (z_1 w_2 + z_2 w_1)\dbar\frac{1}{w_1^2}\wedge \dbar \frac{1}{w_2^2}
    \end{equation*}
 (times $dz\w dw$) generate
$\Homs(\Ok_\Omega/\J,\CH^Z_\Omega)$.  Here we intend to calculate
$R=R_2+R_3$.
Using a comparison with the current $\hat R$
it follows from \cite[Theorem~3.2, Lemma~3.4 and (3.10)]{LarComp} that
\begin{equation} \label{eq:R2exampleFormula}
        R_2 = (I-f_3 \sigma_3) a_2 \mu_0,
    \end{equation}
where
    \begin{equation*}
        \sigma_3 = \frac{1}{|z|^2+|w|^2}\left[ \begin{array}{cccc} \bar{w}_2 & -\bar{w}_1 & \bar{z}_2 & -\bar{z}_1 \end{array} \right]
    \end{equation*}
 is the minimal left-inverse to $f_3$.
 Since $\mu_0$ is pseudomeromorphic with support on $\{ w = 0 \}$,   $\bar{w}_i \mu_0=0$,
 and therefore
    \begin{equation}\label{re2}
        R_2 = \frac{1}{|z|^2}\left[\begin{array}{cccc}
            * & * & -w_2 \bar{z}_2 & w_2 \bar{z}_1 \\
	    * & * & w_1 \bar{z}_2 & -w_1 \bar{z}_1 \\
	    * & * & |z|^2-z_2 \bar{z}_2 & z_2 \bar{z}_1 \\
            * & * & z_1 \bar{z}_2 & |z|^2-z_1 \bar{z}_1
    \end{array}  \right]
    \left[ \begin{array}{c} 0 \\ 0 \\ w_2 \\ w_1 \end{array} \right]
    \mu_0
= \frac{1}{|z|^2} \left[ \begin{array}{c} \bar{z}_1 \mu_1 \\ \bar{z}_2 \mu_1 \\ \bar{z}_1 \mu_2 \\ \bar{z}_2 \mu_2 \end{array}\right].
    \end{equation}
Since $X$ has pure dimension
$R_3 = \dbar \sigma_3 \w R_2$, where the left hand side is the product of the almost semi-meromorphic current
$\dbar\sigma_3$ and the \pmm current $R_2$, cf.~\eqref{eq:PMlimits} and \cite[Section~2]{AL}.
Since $f_3$ is injective, $\sigma_3 = (f_3^* f_3)^{-1} f_3^* = f_3^*/(|z|^2+|w|^2)$.
Thus, $f_3^*(I-f_3 \sigma_3) = 0$, so in view of \eqref{eq:R2exampleFormula},
$R_3 = (|z|^2+|w|^2)^{-1} f_3^* R_2$.
Furthermore, $\bar{w}_j R_2 = d\bar{w}_j \w R_2 = 0$, so we get
\begin{equation}\label{re3}
R_3 = \frac{1}{|z|^2} \left[\begin{array}{cccc} 0 & 0 & d\bar{z}_2 & -d\bar{z}_1 \end{array}\right] R_2
= \frac{\bar{z}_1 d\bar{z}_2 - \bar{z}_2d\bar{z}_1}{|z|^4} \mu_2.
\end{equation}

\subsection{Hefer forms for $(E,f)$}
Recall that a family $H^\ell_k : E_k \to E_\ell$ of
Hefer morphisms are to satisfy, cf.~\eqref{heferlikhet},
$H^\ell_\ell=I_{E_\ell}$ and
\begin{equation}\label{tomat1}
    \delta_{(\zeta,\tau)-(z,w)} H^\ell_k = H^\ell_{k-1} f_k(\zeta,\tau) - f_{\ell+1}(z,w) H^{\ell+1}_k
\end{equation}
for $k > \ell$.
Due to the superstructure, when considering $H$ and $f$ as matrices, \eqref{tomat} means
\begin{equation}\label{tomat}
    \delta_{(\zeta,\tau)-(z,w)} H^\ell_k = H^\ell_{k-1} f_k(\zeta,\tau) - (-1)^{k-\ell-1} f_{\ell+1}(z,w) H^{\ell+1}_k,
\end{equation}
cf.~\cite[(2.12)]{LW2}.
By hands-on calculations, or with the help of Macaulay2, one can check that
\begin{equation*}
    H^0_1=\frac{1}{2\pi i}\left[\begin{array}{c}
       (\tau_1+w_1) d\tau_1+w_1 d\tau_1\\
       \tau_1 d\tau_2+w_2 d\tau_1 \\
       (\tau_2+w_2) d\tau_2 \\
       -\zeta_1 d\tau_2+\zeta_2 d\tau_1+w_1 d\zeta_2-w_2 d\zeta_1
   \end{array}\right]^t,
\end{equation*}
\begin{equation*}
    H^1_2 =
    \frac{1}{2\pi i}\left[\begin{array}{cccc}d\zeta_2&
       0&
       {-d\tau_2}&
       0\\
       {-d\zeta_1}&
       d\zeta_2&
       d\tau_1&
       {-d\tau_2}\\
       0&
       {-d\zeta_1}&
       0&
       d\tau_1\\
       {-d\tau_1}&
       {-d\tau_2}&
       0&
       0\\
       \end{array}\right]
\end{equation*}
\begin{equation*}
    H^2_3=
    \frac{1}{2\pi i}\left[\begin{array}{c}d\tau_2\\
       {-d\tau_1}\\
       d\zeta_2\\
       {-d\zeta_1}\\
   \end{array}\right]
\end{equation*}
\begin{equation*}
    H^0_2=
    \frac{1}{(2\pi i)^2}\left[\begin{array}{cccc}w_1 d\zeta_2 \wedge d\tau_1-w_2 d\zeta_1\wedge d\tau_1 \\
       \zeta_2 d\tau_1\wedge d\tau_2+w_1 d\zeta_2\wedge d\tau_2-w_2 d\zeta_1\wedge d\tau_2 \\
       (\tau_1+w_1) d\tau_1\wedge d\tau_2 \\
       w_2 d\tau_1\wedge d\tau_2
   \end{array}\right]^t
\end{equation*}
\begin{equation*}
    H^1_3=
    \frac{1}{(2\pi i)^2}\left[\begin{array}{c}{-d\zeta_2\wedge d\tau_2}\\
       d\zeta_1\wedge d\tau_2+d\zeta_2\wedge d\tau_1\\
       {-d\zeta_1\wedge d\tau_1}\\
       d\tau_1\wedge d\tau_2\\
       \end{array}\right]
\end{equation*}
\begin{equation*}
    H^0_3=
    \frac{1}{(2\pi i)^3}\left[\begin{array}{c}w_1 d\zeta_2\wedge d\tau_1\wedge d\tau_2-w_2 d\zeta_1\wedge d\tau_1\wedge d\tau_2\\
    \end{array}\right]
\end{equation*}
(where $H^0_1$ and $H^0_2$ are written as transposes of matrices just for space reasons) indeed
satisfy \eqref{tomat} and are thus components of a Hefer morphism.

\subsection{Estimates of integral operators}
Now choose balls $\U'\subset\subset\U\subset\subset \Omega=\C^4_{\zeta,\tau}$ with center at $(0,0)$ and consider
the integral operator
\begin{equation*}
\K\phi=\int_{\zeta,\tau} g \wedge B \w HR\wedge \phi
\end{equation*}
as in Section~\ref{sec:Kdef} for smooth $(0,1)$-forms in $\V'= X\cap \U'$.
We have that $\K\phi=\K_2\phi+\K_3\phi$, where
\begin{equation}\label{k2}
    \mathcal{K}_2 \phi =  \int_{\zeta,\tau} (g_0 B_2+g_1\w B_1) \w H^0_2R_2\wedge \phi
\end{equation}
and
\begin{equation}\label{k3}
    \mathcal{K}_3 \phi(z) = \int_{\zeta,\tau}  \chi    B_1\w H^0_3 R_3\wedge \phi.
\end{equation}
Here $g_0=\chi$ is a cutoff function in
$\U$ with compact support that is equal to $1$ on $\U'$, and $g_{1}$ contains the factor $\dbar\chi$, cf.~\eqref{gval}.
Moreover,
$B_1=b$, $B_2=b\w\dbar b$,
where
$b$ is given by \eqref{kanin2}, cf.~\eqref{kanin}.
Notice however, that since $\bar{\tau} \mu_i = 0$, $d\bar{\tau} \wedge \mu_i = 0$
and that $\bar{w}_i = 0$ considered as a smooth form on $X$, precisely as in Section~\ref{orm},
we can replace $b$ by
$$
\frac{1}{2\pi i} \frac{\sum_{j=1}^2 (\overline{\zeta_j- z_j}) d\zeta_j}{|\zeta-z|^2}
$$
in the formula, and we may assume that $g_0$ and $g_1$ only depend on $\zeta$ and $z$.

For smooth $(0,*)$-forms we have, see \cite[Section~6]{Anorm}, that
\begin{equation}\label{matsnorm}
    |\phi(z,w)|_X \sim |\phi(z,0)|+|z|\left|\frac{\partial}{\partial z}\phi(z,0)\right|+\left| \mathcal L\phi(z,0)\right|,
\end{equation}
where
$$
\mathcal{L} = z_1 \frac{\partial}{\partial w_1}+z_2 \frac{\partial}{\partial w_2}.
$$

Since $B\w g$ has no differentials $d\tau_j$, for degree reasons
we only have to take into account terms of $H$ that contain
the factor $d\tau_1\w d\tau_2$.  By  \eqref{re2}, but with $(\zeta,\tau)$ instead of $(z,w)$,
and the formula above for $H_2^0$ the
relevant part of $(2\pi i)^2 H_2^0 R_2$ therefore is
$$
\frac{1}{|\zeta|^2} \big( |\zeta_2|^2\mu_1+\bar\zeta_1(\tau_1+w_1)\mu_2+\bar\zeta_2 w_2\mu_2\big)=
\mu_1+ \frac{w_1 \overline{\zeta}_1+w_2\overline{\zeta}_2}{|\zeta|^2}\mu_2,
$$
where in the second equality we have used that
$\tau_1 \mu_2 = \zeta_1 \mu_1$.
Thus
 $$
 \K_2\phi= \frac{1}{(2\pi i)^2}  \int_{\zeta,\tau} (g_0 B_2 + g_1\w B_1)\wedge
    \left( \mu_1 + \frac{w_1 \overline{\zeta}_1+w_2\overline{\zeta}_2}{|\zeta|^2}\mu_2  \right) \wedge \phi \wedge d\tau_1 \wedge d\tau_2.
$$
Integrating with respect to $\tau$  and using that
$(2\pi i)^{-2} \mu_2 \wedge \phi \wedge d\tau_1 \wedge d\tau_2 = \mathcal{L} \phi \wedge [\tau = 0]$
we get
\begin{equation} \label{eq:K2}
\K_2\phi =\int_\zeta (g_0 B_2 + g_1\w B_1)\wedge\Big(\phi+ \frac{w_1 \overline{\zeta}_1+w_2\overline{\zeta}_2}{|\zeta|^2}
\w \La \phi\Big).
\end{equation} 

From \eqref{re3} and the formula for $H_3^0$ we get
\begin{align} \label{eq:K3}
    \begin{gathered}
	    \mathcal{K}_3 \phi=        \pm\frac{1}{(2\pi i)^3} \int \chi \frac{w_1\overline{(\zeta_1-z_1)}+w_2\overline{(\zeta_2-z_2)}}{|\zeta-z|^2}\frac{\overline{\zeta_1}d\overline{\zeta}_2-\overline{\zeta}_2 d\overline{\zeta}_1}{|\zeta|^4} \wedge d\zeta_1 \wedge d\zeta_2 \wedge
    \phi \wedge \mu_2 \wedge d\tau_1 \wedge d\tau_2
    = \\ =
    \pm \frac{1}{(2\pi i)^2}\int \chi \frac{w_1\overline{(\zeta_1-z_1)}+w_2\overline{(\zeta_2-z_2)}}{|\zeta-z|^2}\frac{\overline{\zeta_1}d\overline{\zeta}_2-\overline{\zeta}_2 d\overline{\zeta}_1}{|\zeta|^4} \wedge d\zeta_1 \wedge d\zeta_2 \wedge
    (\mathcal{L}\phi)(\zeta,0).
    \end{gathered}.
\end{align}

We now estimate $\K_2\phi$ by considering the various parts of the norm, cf.~\eqref{matsnorm},
letting $K=\supp \chi\cap Z$ and keeping in mind that $z\in X\cap \U'$ so that $|g_1|$ is bounded. To begin with
\begin{equation}\label{k2a}
    |(\mathcal{K}_2 \phi)(z,0)| = \left|\int_\zeta
    (\chi B_2+g_1 B_1) \wedge \phi(\zeta,0) \right| \lesssim
  \int_{\zeta\in K } \frac{1}{|\zeta-z|^3} |\phi(\zeta)|_X.
  \end{equation}
Next we have, cf.~\eqref{plupp},
\begin{equation}\label{k2b}
    |z|\left|\left(\frac{\partial}{\partial z_i} \mathcal{K}_2 \phi\right)(z,0)\right|  = |z|\left| \int B_2 \wedge \frac{\partial}{\partial \zeta_i} \left( \chi \phi(\zeta,0) \right) +\cdots \right| \lesssim
|z| \int_{\zeta\in K} \frac{1}{|\zeta-z|^3}\frac{1}{|\zeta|} |\phi(\zeta)|_X.
\end{equation}
Finally,
\begin{align}\label{k2c}
    |\mathcal{L}\mathcal{K}_2 \phi)(z,0)| =
   \left| \int_\zeta (\chi B_2 +g_1 B_1) \wedge \left( \frac{z_1 \overline{\zeta}_1+z_2\overline{\zeta}_2}{|\zeta|^2} \right) (\mathcal{L} \phi)(\zeta,0) \right|
    \lesssim |z| \int_{\zeta\in K}  \frac{1}{|\zeta-z|^3}\frac{1}{|\zeta|} |\phi(\zeta)|_X.
\end{align}

Since $\K_3\phi$ vanishes when $w=0$, the two first terms in the norm \eqref{matsnorm} vanish, and  thus we get the estimate
\begin{align*}
|\K_3\phi|_X \sim
    |(\mathcal{L} \mathcal{K}_3 \phi)(z,0)| \sim
    \left|\int_{\zeta,z}  \chi \frac{z_1\overline{(\zeta_1-z_1)}+z_2\overline{(\zeta_2-z_2)}}{|\zeta-z|^2}\frac{\overline{\zeta_1}d\overline{\zeta}_2-\overline{\zeta}_2 d\overline{\zeta}_1}{|\zeta|^4} \wedge d\zeta_1 \wedge d\zeta_2 \wedge
    (\mathcal{L}\phi)(\zeta,0)\right| \\
    \lesssim |z|\int_{\zeta\in K} \frac{1}{|\zeta-z|}\frac{1}{|\zeta|^3} |\phi(\zeta)|_X.
       \end{align*}
Thus we have proved
\begin{equation}
|\K_2\phi(z)|_X\le C\int_{\zeta\in K} \Big(1+\frac{|z|}{|\zeta|}\Big)\frac{1}{|\zeta-z|^3},
\quad
|\K_3\phi(z)|_X\le C \int_{\zeta \in K} \frac{1}{|\zeta-z|}\frac{|z|}{|\zeta|^3} |\phi(\zeta)|_X.
\end{equation}

By \cite[Theorem~4.1]{LR2}, $\|\K_2\phi(z)\|_{L^p(\V')} \le C \|\phi\|_{L^p(\V)}$ if $p > 4/3$.
Following the argument of that proof, but where $\|\zeta-z\|^{2n-1}$ is everywhere replaced by $\|\zeta-z\|$,
it follows that $\|\K_3\phi(z)\|_{L^p(\V')} \le C \|\phi\|_{L^p(\V)}$ if $p > 4$.
We thus obtain the following estimate.

\begin{prop}
Let $X$ be the space above and let $\phi$ be a smooth $(0,*)$-form in $\V$.
We have  the a~priori estimate
\begin{equation}\label{tipsy}
\|\K\phi\|_{L^p(\V')}\le C \|\phi\|_{L^p(\V)}
\end{equation}
for $4<p\le \infty$.
\end{prop}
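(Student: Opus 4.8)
The plan is to use the decomposition $\K\phi=\K_2\phi+\K_3\phi$ coming from $R=R_2+R_3$, together with the two pointwise majorizations established above,
\[
|\K_2\phi(z)|_X\lesssim\int_{\zeta\in K}\Big(1+\tfrac{|z|}{|\zeta|}\Big)\frac{|\phi(\zeta)|_X}{|\zeta-z|^3},\qquad |\K_3\phi(z)|_X\lesssim\int_{\zeta\in K}\frac{|z|}{|\zeta|^3}\,\frac{|\phi(\zeta)|_X}{|\zeta-z|},
\]
where $K=\supp\chi\cap Z$ is a fixed compact subset of $\C^2_z$, and then to estimate the $L^p$-norm of each of the two integral operators separately.

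First I would dispose of $\K_2$. Its kernel is a sum of a pure Bochner--Martinelli singularity $|\zeta-z|^{-3}$, which on a bounded subdomain of $\C^2\cong\R^4$ is an honest Riesz potential of order $1$ and hence is $L^p\to L^p$ bounded for every $p$, and a term carrying only the mild extra weight $|z|/|\zeta|$. The latter is exactly of the type of integral operator studied in \cite{LR2}, and \cite[Theorem~4.1]{LR2} applies and gives $\|\K_2\phi\|_{L^p(\V')}\lesssim\|\phi\|_{L^p(\V)}$ for $p>4/3$. The endpoint $p=\infty$ is handled by hand: one checks that the kernel is uniformly integrable in $\zeta$ by splitting $K$ into $\{|\zeta|\ge|z|/2\}$, where $|z|/|\zeta|$ is bounded, and $\{|\zeta|<|z|/2\}$, where $|\zeta-z|\gtrsim|z|$; each piece contributes $O(1)$.

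The substance of the proposition is the estimate for $\K_3$. This operator has no analogue in the Cohen--Macaulay case: it is produced by the third residue component $R_3=(\bar z_1\,d\bar z_2-\bar z_2\,d\bar z_1)|z|^{-4}\mu_2$, which is precisely what records the failure of $\Ok_X$ to be Cohen--Macaulay at the origin. Its kernel $|z|\,|\zeta|^{-3}|\zeta-z|^{-1}$ has a very mild diagonal singularity, but the weight $|\zeta|^{-3}$ is borderline non-integrable on $\R^4$, so the numerator $|z|$ is essential and the admissible range of $p$ must shrink. I would obtain the bound by running the proof of \cite[Theorem~4.1]{LR2} line by line with the Bochner--Martinelli exponent $|\zeta-z|^{2n-1}=|\zeta-z|^3$ replaced throughout by $|\zeta-z|$; the same weighted Schur-type estimates then yield $\|\K_3\phi\|_{L^p(\V')}\lesssim\|\phi\|_{L^p(\V)}$ for $p>4$. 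Again $p=\infty$ is a direct computation: decomposing $K$ according to which of $|\zeta|$, $|\zeta-z|$, $|z|$ is smallest, in each region one factor of the kernel dominates the others and the resulting integral is controlled by $|z|\log(1/|z|)$ or by $|z|$, hence bounded on the fixed domain.

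Putting the two ranges together, $\K=\K_2+\K_3\colon L^p(\V)\to L^p(\V')$ is bounded for $4<p\le\infty$, which is \eqref{tipsy}. The genuine obstacle is the $\K_3$ step: one must verify that the modified kernel still fits the Schur-test machinery of \cite{LR2} with exactly the threshold $p=4$, and in particular that no logarithmic loss survives in the finite range; everything else is bookkeeping with the explicit Hefer forms $H^0_2,H^0_3$ and residue components $R_2,R_3$ already written down above.
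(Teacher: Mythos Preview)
Your proposal is correct and follows essentially the same approach as the paper: starting from the pointwise majorizations for $\K_2\phi$ and $\K_3\phi$ derived just above, you invoke \cite[Theorem~4.1]{LR2} for $\K_2$ (getting $p>4/3$) and rerun that argument with $|\zeta-z|^{2n-1}$ replaced by $|\zeta-z|$ for $\K_3$ (getting $p>4$), then intersect the ranges. Your explicit treatment of the endpoint $p=\infty$ by splitting $K$ into regions is a small addition the paper leaves implicit.
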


If $\phi$ has bidegree $(0,2)$, then $\K_3\phi$ vanishes for degree reasons,
so then \eqref{tipsy} in fact holds for $p>4/3$.


\begin{thebibliography}{99}


\bibitem{Aint} \textsc{M.\ Andersson:} Integral representation with weights I.
\textit{Math. Ann.}, \textbf{326} (2003), 1--18.

\bibitem{Aint2} \textsc{M.\ Andersson:} Integral representation with weights II,
division and interpolation formulas.
\textit{Math. Z.}, \textbf{254} (2006), 315--332.

\bibitem{ACH} \textsc{M.\ Andersson:} Uniqueness and factorization of Coleff-Herrera currents.
\textit{Ann. Fac. Sci. Toulouse Math.}, \textbf{18} (2009), no. 4, 651--661.

\bibitem{Aext} \textsc{M.\ Andersson:} Coleff-Herrera currents, duality, and Noetherian operators.
\textit{Bull. Soc. Math. France},  \textbf{139} (2011),  535--554.

\bibitem{Anorm} \textsc{M.\ Andersson:}
A pointwise norm on a non-reduced analytic space.
\textit{J. Funct. Anal.}
\textbf{283} (2022), no. 4, Paper No. 109520, 36 pp.

\bibitem{AA} \textsc{M.\ Andersson:}
$L^p$-estimates of extensions of holomorphic functions defined on a non-reduced subvariety
\textit{Ann.~Inst.~Fourier}
(to appear)

\bibitem{AL} \textsc{M. Andersson, R. L\"ark\"ang:}
The $\dbar$-equation on a non-reduced analytic space.
\textit{Math Ann}, \textbf{374} (2019), 553--599.

\bibitem{ALp} \textsc{M.\ Andersson, R.\ Lärkäng, M.\ Lennartsson, H.\ Samuelsson.\ Kalm:} The $\bar\partial$-equation for $(p,q)$-forms on a non-reduced analytic space.
\textit{Preprint}, 2020. Available at arXiv:2002.01797 [math.CV].

\bibitem{AS} \textsc{M.\ Andersson, H.\ Samuelsson:}
 A Dolbeault-Grothendieck lemma on complex spaces via Koppelman formulas.
\textit{Invent. Math.}, \textbf{190} (2012), 261--297.

\bibitem{AW1} \textsc{M.\ Andersson, E.\ Wulcan:} Residue currents with prescribed annihilator ideals.
\textit{Ann. Sci. \'{E}cole Norm. Sup.}, \textbf{40} (2007), 985--1007.

\bibitem{AW2} \textsc{M.\ Andersson, E.\ Wulcan:} Decomposition of residue currents.
\textit{J. Reine Angew. Math.}, \textbf{638} (2010), 103--118.

\bibitem{Barlet} \textsc{D.\ Barlet:} Le faisceau $\omega_X$ sur un espace analytique $X$ de dimension pure.
\textit{Fonctions de plusieurs variables complexes, III (S\'em. Fran\c cois Norguet, 1975--1977)}
187--204, Lecture Notes in Math., 670, Springer, Berlin, 1978.

\bibitem{BS} \textsc{B.\ Berndtsson, N.\ Sibony:}
The $\dbar$-equation on a positive current
\textit{Invent. Math.}, \textbf{147},  (2002),  371--428.



\bibitem{BjAbel} \textsc{J.-E. Bj\"ork:}
Residues and $\mathcal{D}$-modules.
\textit{The legacy of Niels Henrik Abel}, 605--651, Springer, Berlin, 2004.

\bibitem{CDM}
\textsc{J Cao, J-P Demailly, S Matsumura:}
{A general extension theorem for cohomology classes on non reduced analytic subspaces}
\textit{Sci. China Math.} {\bf 60},  (2017),  949--962.



 \bibitem{De}
 \textsc{J-P Demailly:}
{Extension of holomorphic functions defined on non reduced analytic
subvarieties.}
\textit{The legacy of Bernhard Riemann after one hundred and fifty years. Vol. I, 191--222}, Adv. Lect. Math. (ALM), 35.1, Int. Press, Somerville, MA, 2016.



\bibitem{DFV} \textsc{K.\ Diederich, J.\ E.\ Forn\ae ss, S.\ Vassiliadou:} Local $L^2$
results for $\debar$ on a singular surface.
\textit{Math. Scand.}, \textbf{92} (2003), 269--294.

\bibitem{FoGa} \textsc{J.\ E.\ Forn\ae ss, E.\ A.\ Gavosto:} The Cauchy-Riemann equation on
singular spaces.
\textit{Duke Math. J.}, \textbf{93} (1998), 453--477.

\bibitem{FOV} \textsc{J.\ E.\ Forn\ae ss, N.\ \O vrelid, S.\ Vassiliadou:} Semiglobal
results for $\debar$ on a complex space with arbitrary singularities.
\textit{Proc. Am. Math. Soc.}, \textbf{133(8)} (2005), 2377--2386.

\bibitem{HePo2} \textsc{G.\ Henkin, P.\ Polyakov:}
Residual $\overline\partial$-cohomology and the complex Radon transform on subvarieties of $\mathbb{C}P^n$.
\textit{Math.\ Ann.}, \textbf{354} (2012), 497--527.

\bibitem{HePo3} \textsc{G.\ Henkin, P.\ Polyakov:}
Explicit Hodge-type decomposition on projective complete intersections.
\textit{J. Geom. Anal.}, \textbf{26} (2016), no. 1, 672--713.

\bibitem{LarComp} \textsc{R.\ L\"ark\"ang:}
A comparison formula for residue currents.
\textit{Math. Scand.}, \textbf{125} (2019), no. 1, 39--66.

\bibitem{LR2} \textsc{R.\ L\"ark\"ang, J.\ Ruppenthal:}
Koppelman formulas on affine cones over smooth projective complete intersections.
\textit{Indiana Univ. Math. J.}
\textbf{67},  (2018), no. 2, 753--780.


\bibitem{LW2} \textsc{R.\ L\"ark\"ang, E.\ Wulcan:}
Residue currents and fundamental cycles.
\textit{Indiana Univ. Math. J.}
\textbf{67},  (2018), no. 3, 1085--1114.

\bibitem{Nag} \textsc{M.\ Nagase:} Remarks on the $L^2$-Dolbeault cohomology groups of
singular algebraic surfaces and curves.
\textit{Publ. Res. Inst. Math. Sci.}, \textbf{26(5)} (1990), 867--883.

\bibitem{OvVass} \textsc{N.\ \O vrelid, S.\ Vassiliadou:} Some $L^2$ results for $\debar$
on projective varieties with general singularities.
\textit{Amer. J. Math.}, \textbf{131} (2009), 129--151.

\bibitem{OvVass2} \textsc{N.\ \O vrelid, S.\ Vassiliadou:}
$L^2$-$\dbar$-cohomology groups of some singular complex spaces.
\textit{Invent. Math.}, \textbf{192} (2013), no. 2, 413--458.

\bibitem{PaSt1} \textsc{W.\ Pardon, M.\ Stern:} $L^2$-$\debar$-cohomology of complex projective varieties.
\textit{J. Amer. Math. Soc.}, \textbf{4(3)} (1991), 603--621.

\bibitem{PaSt2} \textsc{W.\ Pardon, M.\ Stern:} Pure Hodge structure on the $L^2$-cohomology of
varieties with isolated singularities.
\textit{J. Reine Angew. Math.}, \textbf{533} (2001), 55--80.


\bibitem{Ran} \textsc{R. M. Range:} Holomorphic functions and integral representations in several complex variables.
\textit{Graduate Texts in Mathematics}, \textbf{108}. Springer-Verlag, New York, 1986.


\bibitem{Rupp2} \textsc{J.\ Ruppenthal:} $L^2$-theory for the $\debar$-operator on
compact complex spaces.
\textit{Duke Math. J.}, \textbf{163} (2014), no. 15, 2887--2934.

\bibitem{RZ} \textsc{J.\ Ruppenthal, E.\ S.\ Zeron:} An explicit $\debar$-integration
formula for weighted homogeneous varieties II, forms of higher degree.
\textit{Mich. Math. J.},  \textbf{59} (2010), 283--295.

\end{thebibliography}
\end{document}